\newtheorem{theorem}{Theorem}
\newtheorem{lemma}{Lemma}
\newtheorem{proposition}{Proposition}
\newtheorem{remark}{Remark}
\newcommand{\T}{{\mathbb T}}
\newcommand{\N}{{\mathbb N}}
\newcommand{\Z}{{\mathbb Z}}
\newcommand{\R}{{\mathbb R}}
\newcommand{\C}{{\mathbb C}}
\newcommand{\pa}{{\partial}}
\newcommand{\na}{{\nabla}}
\newcommand{\eps}{{\varepsilon}}
\newcommand{\be}{\begin{equation}}
\newcommand{\ee}{\end{equation}}
\newcommand{\Om}{\Omega}
\newcommand{\dv}{\mathrm{div}\;}
\newcommand{\om}{\omega}
\newcommand{\tv}{\tilde v}
\newcommand{\Supp}{\mathrm{Supp}\;}
\newcommand{\ba}{\begin{aligned}}
\newcommand{\ea}{\end{aligned}}
\newcommand{\sgn}{\mathrm{sgn}}
\newcommand{\cH}{\mathcal H}
\newcommand{\cF}{\mathcal F}
\newcommand{\cV}{\mathcal V}
\def\div{\hbox{div  }}
\def\Re{{\rm Re}  \,}
\def\inside{\quad \mbox{in} \quad}
\def\defin{\: := \: }
\title{Nonlinear boundary layers for rotating fluids} 
\author{\footnote{Sorbonne Universit\'es, UPMC Univ Paris 06, CNRS, UMR 7598, Laboratoire Jacques-Louis Lions, 4, place Jussieu 75005, Paris, France.} Anne-Laure Dalibard 
and 
 \footnote{Universit\'e Paris Diderot, Sorbonne Paris Cit\'e, Institut de Math\'ematiques de Jussieu-Paris Rive Gauche, UMR 7586, F- 75205 Paris, France} David G\'erard-Varet }
\date{}
\begin{document}
\bibliographystyle{amsplain}
\maketitle

\begin{abstract}
We investigate the behavior of rotating incompressible flows near a non-flat horizontal bottom.  In the flat case, the velocity profile is given explicitly by a simple linear ODE. When bottom variations are taken into account, it is governed by a nonlinear PDE system, with far less obvious mathematical properties. We establish the well-posedness of this system and the asymptotic behavior of the solution away from the boundary. In the course of the proof, we investigate in particular the action of pseudo-differential operators in non-localized Sobolev spaces. Our results extend the older paper \cite{Gerard-Varet:2003b}, restricted to periodic variations of the bottom. It ponders on the recent linear analysis carried in \cite{DalibardPrange}. 
\end{abstract}

{{\em Keywords:} \small rotating fluids, boundary layers, homogenization} 
 
\section{Introduction}
The general concern of this paper is the effect of rough walls on fluid flows, in a context where the rough wall has very little structure. This effect is important in several problems, like transition to turbulence or drag computation.  For instance, understanding the connection between roughness and drag is crucial for microfluidics, because friction at solid boundaries is a major factor of energy loss in microchannels.  This issue has been much studied over recent years, through both theory and experiments \cite{Lauga,Bocquet_Barrat}. Conclusions are ambivalent. On the one hand, rough surfaces may increase the friction area, and thus enstrophy dissipation. On the other hand, recent experiments have shown that rough hydrophobic surfaces may lead to drag decrease:  air bubbles can be trapped  in the humps of the roughness, generating some slip \cite{Vino,Ybert}. 

\medskip
Mathematically, these problems are often tackled by a homogenization approach. Typically, one considers Stokes equations over a rough plate, modeled by an oscillating boundary of small wavelength and amplitude: 
\begin{equation} \label{Gammaeps}
 \Gamma^\eps :  \: x_3 = \eps \gamma(x_1/\eps, x_2/\eps), \quad \eps \ll 1, 
 \end{equation}  
where the function $\gamma = \gamma(y_1,y_2)$ describes the roughness pattern. Within this formalism, the understanding of roughness-induced effects comes down to an asymptotic problem, as $\eps \rightarrow 0$. The point is to derive effective boundary conditions at the flat plate $\Gamma^0$, retaining in this boundary condition an averaged effect of the roughness. We refer to the works \cite{Achdou:1995,Achdou:1998,Achdou:1998a,AmBrLe,Jager:2001,Jager:2003,Neuss,BrMi,MikNec} on this topic. 
In all of these works, a restrictive hypothesis is made, namely periodicity of the roughness pattern $\gamma$. This hypothesis simplifies greatly the construction of the so-called boundary layer corrector, describing the small-scale variations of the flow near the boundary. This corrector is an analogue of the cell corrector in classical homogenization of heterogeneous media. 

\medskip
The main point and difficulty is the mathematical study of the boundary layer equations, that are satisfied formally by the boundary layer corrector. 
When $\gamma$ is periodic in $y_1,y_2$, the solution of the boundary layer system is itself sought periodic, so that well-posedness and qualitative properties of the system are easy to determine. When the periodicity structure is relaxed, and replaced by general ergodicity properties, the analysis is still possible, but much more involved, as shown in \cite{BaGe,DGV_CMP,DGVNMnoslip}. A key feature of these articles is the linearity of the boundary layer system: after the rescaling $y = x / \eps$, it is governed by a Stokes equations in the boundary layer domain
\begin{equation} \label{defOmegabl}
\Omega_{bl} = \{ y, \: y_3 > \gamma(y_1,y_2)\}. 
\end{equation}
It thus reads 
\begin{equation} \label{Stokes}
\left\{
\begin{aligned}
-\Delta v + \na p & = 0, \quad  \mbox{in} \:  \Omega_{bl}, \\
\div v & = 0,  \quad  \mbox{in} \:  \Omega_{bl}, \\
v\vert_{\pa \Omega_{bl}} & = \phi
\end{aligned}
\right.
\end{equation}
for some Dirichlet boundary data $\phi$ that has no decay as $y_1,y_2$ go to infinity, but no periodic structure. As a consequence, spaces of infinite energy, such as $H^{s}_{uloc}$, form a natural functional setting for such equations.

\medskip
A natural challenge is to extend this type of analysis to nonlinear systems. This is the goal of the present paper.   
Namely, we will study a nonlinear boundary layer system that describes a  rotating fluid near a rough boundary.  The dynamics of rotating fluid layers is notably relevant in the context of geophysical flows, for which the Earth's rotation plays a dominant role. The system under consideration reads 
\begin{equation}  \label{BL}
\left\{
\begin{aligned}
v \cdot \na v  + \na p  + e \times v - \Delta v  & =  0 \quad \mbox{in} \: \Omega_{bl} \\
\div v & = 0  \quad \mbox{in} \: \Omega_{bl} \\
v\vert_{\pa \Omega_{bl}} & = \phi.
\end{aligned}
\right.
\end{equation}
These are the incompressible Navier-Stokes equations written in a rotating frame, which is the reason for the extra Coriolis force $e \times u$, where $e = e_3 = (0,0,1)^t$.  The equations in \eqref{BL} can be obtained through an asymptotics of the full rotating fluid system 
\begin{equation} \label{NSC}
 \textrm{Ro} (\pa_t u + u \cdot \na u)  + e \times u -  E \Delta u = 0, \quad \div u = 0
 \end{equation}
where $\textrm{Ro}$ and $\textrm{E}$ are the so-called Rossby and Ekman numbers. These parameters are small in many applications. In the vicinity of the rough boundary \eqref{Gammaeps}, and in the special case where 
\begin{equation} \label{scaling}
 E  \sim  \eps^2, \quad \textrm{Ro}  \sim \eps 
\end{equation}
it is natural to look for an asymptotic behavior of the type  
$$ u^\eps(t,x) \sim v(t,x_1,x_2, x/\eps) $$ 
where $v = v(t,x_1,x_2,y)$, $y \in \Omega_{bl}$. Injecting this Ansatz in  \eqref{NSC} yields (\ref{BL}a)-(\ref{BL}b), where the ``slow variables" $(t,x_1,x_2)$ are only parameters and are eluded. 

\medskip
The main goal of this paper is to construct a solution $v$ of system \eqref{BL}, under no structural assumption on $\gamma$. We shall moreover provide information on the behavior of $v$ away from the boundary. We will in this way generalize article  \cite{Gerard-Varet:2003b}  by the second author  in which periodic roughness was considered. See also \cite{DGV_Dormy}.  Before stating the main difficulties and results of our study, several remarks are in order: 
\begin{enumerate}
\item
The choice of the scaling \eqref{scaling}, that leads to the derivation of the boundary layer system,  may seem  peculiar. It is however the richest possible, as it retains all terms in the equation for the boundary layer. All other scaling would provide a degeneracy of system \eqref{BL}. 

\item
In the flat case, that is for the roughness profile $\gamma = 0$, and for $\phi = (\phi_1,\phi_2,0)$, with $\phi_1, \phi_2$ independent of $y$, the solution of \eqref{NSC} is explicitly given in complex form by 
\begin{equation} 
(v_1 + i v_2)(y) = (\phi_1 + i \phi_2) \exp\left(-(1+i)y_3/\sqrt{2} \right), \quad v_3 = 0. 
\end{equation}
This profile, sometimes called Ekman spiral, solves the linear ODE 
$$ e \times v  -  \pa^2_{3} v = 0. $$
Considering roughness turns this linear ODE into a nonlinear PDE, and as we will see,  changes drastically the properties of the solution. 
\item
Rather than the  Dirichlet condition $v\vert_{\pa \Omega_{bl}} = \phi$, some slightly different settings could be considered: 
\begin{itemize}
\item One could for instance prescribe a homogenous Dirichlet condition  $v\vert_{\pa \Omega_{bl}} = 0$, and add a source term with enough decay in $y_3$. This would correspond to a localized forcing of the boundary layer. 
\item One could replace the Dirichet condition by a Navier condition, that is of the type: 
$$D(u)n \times n \vert_{\pa \Omega_{bl}} = f, \quad u \cdot n\vert_{\pa \Omega_{bl}} = 0,$$
with $D(u)$ the symmetric part of $\na u$, and $n$ the normal unit vector at the boundary. For instance, one could  think of \eqref{Gammaeps} as modeling  an oscillating free surface, under the rigid lid approximation. In this context, the Navier condition would model a wind forcing, and the boundary layer domain would model the water below the free surface (changing the direction of the vertical axis). We refer to \cite{Ped} for some similar modeling, and to \cite{Casado,Bucur_Feireisl,Bonnivard,DaGe} for the treatment of such Navier condition. As shown in those papers, some hypothesis on the non-degeneracy of the roughness is necessary to the mathematical analysis. 
\end{itemize}
However, our analysis does not  extend to the important case of a inhomogeneous Dirichlet condition at infinity, which models a boundary layer driven by an external flow. For linear systems, one can in general lift this Dirichlet data at infinity, and recover the case of a Dirichlet data at the bottom boundary, like in \eqref{Stokes}. But for our nonlinear system \eqref{BL}, this lift would lead to the introduction of an additional drift term in the momentum equation, which would break down its rotational invariance. 
\end{enumerate}

\section{Statement of the results}
Our main result is a well-posedness theorem for the boundary layer system  \eqref{BL}, where $\phi$ is a  given boundary data, with no decay tangentially to the boundary, and  satisfying $\phi \cdot n\vert_{\pa \Omega_{bl}} = 0$.  As usual in the theory of steady Navier-Stokes equations, the well-posedness will be obtained under a smallness hypothesis. We first introduce, for  any unbounded $\Omega \subset \R^d$,   the spaces 
$$ L^2_{uloc}(\Omega) = \{ f, \quad \sup_{k \in \Z^d} \int_{B(k, 1) \cap \Omega} |f|^2 < +\infty\}, $$
$$ \: \mbox{and for all}  \: m \ge 0, \quad  \: H^m_{uloc}(\Omega) = \{ f, \pa^\alpha f \in L^2_{uloc}(\Omega), \quad \forall \alpha \le m \}. $$
These spaces are of course Banach spaces when endowed with their natural norms. 
\begin{theorem} \label{Thm_WP}
Let  $\gamma$ be bounded and Lipschitz, $\Omega_{bl}$ defined in \eqref{defOmegabl}. There exists $\delta_0, C > 0$, such that: for all  $\phi \in H^2_{uloc}(\pa \Omega_{bl})$, satisfying $\phi \cdot n\vert_{\pa \Omega_{bl}} = 0$ and $\| \phi \|_{H^2_{uloc}} \le \delta_0$,  system has a unique solution 
$(v,p)$ with 
$$ (1+y_3)^{1/3} v  \in H^1_{uloc}(\Omega_{bl}), \quad (1+y_3)^{1/3} p \in  L^2_{uloc}(\Omega_{bl}), $$
and  
$$ \| (1+y_3)^{1/3} v \|_{H^1_{uloc}} + \| (1+y_3)^{1/3} p \|_{L^2_{uloc}} \le C \|\phi\|_{H^2_{uloc}}. $$
\end{theorem}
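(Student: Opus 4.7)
I would set up a Banach fixed-point argument based on the linear result of \cite{DalibardPrange}. Introduce the solution space
$$ X := \{v :\: \dv v = 0,\; v\vert_{\pa\Om_{bl}} = 0,\; (1+y_3)^{1/3} v \in H^1_{uloc}(\Om_{bl})\}, \quad \|v\|_X := \|(1+y_3)^{1/3} v\|_{H^1_{uloc}}. $$
The first step is to construct a divergence-free lift $v_0$ of $\phi$, concentrated in a fixed neighborhood of $\pa\Om_{bl}$, satisfying $\|v_0\|_X \lesssim \|\phi\|_{H^2_{uloc}}$; such a lift exists by standard constructions since $\phi\cdot n = 0$, $\phi \in H^2_{uloc}(\pa\Om_{bl})$ and $\gamma$ is Lipschitz. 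Writing $v = v_0 + w$, the problem reduces to finding $w \in X$ satisfying the forced linear Coriolis-Stokes problem
$$ -\Delta w + e\times w + \na q = F(w), \quad \dv w = 0, \quad w\vert_{\pa\Om_{bl}} = 0, $$
with $F(w) := -(v_0+w)\cdot\na(v_0+w) + \Delta v_0 - e\times v_0$. This defines a map $\Psi:w\mapsto w'$ whose fixed point we seek.

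\textbf{Linear and nonlinear estimates.} The linear result of \cite{DalibardPrange} should provide, for an appropriate source space $Y$ for $F$,
$$ \|\Psi(w)\|_X + \|(1+y_3)^{1/3}q\|_{L^2_{uloc}} \le C \|F(w)\|_Y, $$
where the weight $(1+y_3)^{1/3}$ is the sharp far-field decay forced by the Fourier / pseudo-differential analysis of the Coriolis-Stokes resolvent. I would treat this estimate as a black box. To bound the nonlinear term, I would use the uloc Sobolev embedding $H^1_{uloc}(\Om_{bl}) \hookrightarrow L^6_{uloc}(\Om_{bl})$ in three dimensions: if $\tilde u := (1+y_3)^{1/3} u \in H^1_{uloc}$, then $\tilde u \in L^6_{uloc}$, so
$$ \|(1+y_3)^{2/3}\, u \otimes u\|_{L^3_{uloc}} \le \|\tilde u\|_{L^6_{uloc}}^2 \lesssim \|u\|_X^2. $$
Written in divergence form, $u\otimes u$ should fit the source space $Y$, and analogous bounds hold for the cross and linear terms in $F(w)$, giving
$$ \|F(w)\|_Y \le C_1 \|\phi\|_{H^2_{uloc}} + C_2 \bigl(\|v_0\|_X^2 + \|v_0\|_X \|w\|_X + \|w\|_X^2\bigr). $$

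\textbf{Fixed point and contraction.} Combining the two previous displays, $\|\Psi(w)\|_X \le C(\|\phi\|_{H^2_{uloc}} + \|w\|_X^2 + \|\phi\|_{H^2_{uloc}}\|w\|_X)$. Choosing the ball radius $R = 2C\|\phi\|_{H^2_{uloc}}$ and $\delta_0$ small, $\Psi$ maps $B_R \subset X$ into itself. The difference $\Psi(w_1)-\Psi(w_2)$ solves the linear Coriolis-Stokes problem with zero boundary data and forcing bilinear in $w_1-w_2$, giving
$$ \|\Psi(w_1)-\Psi(w_2)\|_X \le C' \bigl(\|v_0\|_X + \|w_1\|_X + \|w_2\|_X\bigr) \|w_1-w_2\|_X, $$
which is a strict contraction on $B_R$ for $\delta_0$ small. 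Banach's fixed point theorem then yields existence and uniqueness of $w$, hence of $v = v_0 + w$, with the announced bound on $v$ and $p = q + \text{(pressure of the lift)}$.

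\textbf{Main obstacle.} The crux is that $u \otimes u$ sits naturally in \emph{weighted} $L^3_{uloc}$ rather than weighted $L^2_{uloc}$: the linear theory must therefore accept non-$L^2$ divergence-form forcings in non-localized Sobolev spaces. Verifying this compatibility, \emph{i.e.} identifying the admissible source space $Y$ and confirming that the sharp decay exponent $\tfrac13$ is exactly the one delivered by \cite{DalibardPrange} in this less Hilbertian setting, is the technical heart of the matter and is presumably what motivates the announced analysis of pseudo-differential operators on $H^s_{uloc}$ spaces. A secondary difficulty is ensuring that the lift $v_0$ may be chosen so that $\Delta v_0$ and $v_0\cdot\na v_0$ fit into $Y$ with the correct control by $\|\phi\|_{H^2_{uloc}}$ despite the curvature of $\pa\Om_{bl}$.
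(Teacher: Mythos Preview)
Your scheme is a direct fixed point on the whole rough domain $\Omega_{bl}$, treating the linear Stokes--Coriolis problem with source on $\Omega_{bl}$ as a black box. The paper does \emph{not} proceed this way, precisely because that black box is not available. The result of \cite{DalibardPrange} covers only the \emph{homogeneous} linear problem \eqref{BL-lin} on $\Omega_{bl}$, and even that is obtained by a transparent-boundary-condition reduction to a strip. Extending it to a forced problem on $\Omega_{bl}$, with the weight $(1+y_3)^{1/3}$ on the output and your weighted $L^3_{uloc}$ (or any low-regularity) space on the source, is not a minor addendum: the decay exponent $1/3$ comes from a delicate low-frequency analysis of the Green function of the half-space Stokes--Coriolis operator (your own ``main obstacle'' paragraph correctly senses this). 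Moreover, the high-frequency part of that analysis consumes derivatives, so the paper's linear estimate (Theorem~\ref{thm_SCsource}) is stated for $F$ in $H^m_{uloc}$ with $m\gg 1$, not for $F$ in $L^3_{uloc}$. Your bilinear bound in $L^3_{uloc}$ would therefore not plug into any linear estimate that is actually proved.

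The paper instead splits the domain at a flat height $y_3=M>\sup\gamma$. In the half-space $\{y_3>M\}$ it proves the forced linear estimate (Theorem~\ref{thm_SCsource}) by Fourier methods, and then runs a contraction in the high-regularity space $\mathcal H^{m+1}=\{(1+y_3)^{1/3}v\in H^{m+1}_{uloc}\}$, where the product $u\otimes u$ is controlled by the algebra property rather than by Sobolev embedding (Proposition~\ref{prop:NS-1}). In the bounded strip $\Omega_{bl}^M$ it solves the full nonlinear system with a prescribed stress at $y_3=M$ by Ladyzhenskaya--Solonnikov truncated-energy estimates (Lemma~\ref{lem:v-}); here the vertical boundedness supplies the missing Poincar\'e inequality. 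The two pieces are then matched by an implicit function theorem on the stress $\psi$ at $y_3=M$, using that $d_\psi\mathcal F(0,0)$ is invertible thanks to the Dirichlet-to-Neumann theory of \cite{DalibardPrange}. So the architecture is genuinely different from yours: two separate nonlinear problems glued by an implicit function theorem, rather than one global fixed point. Your lifting idea does appear, but only inside the strip analysis, where the bounded geometry makes it unproblematic.
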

This theorem generalizes the result of \cite{Gerard-Varet:2003b}, dedicated to the case of periodic roughness pattern $\gamma$. In this case, the analysis is much easier, as the solution $v$ of \eqref{BL} is itself periodic in $y_1,y_2$.  Through standard arguments, one can then build a solution $v$ satisfying
$$ \int_{\T^2} \int_{y_3 > \gamma(y_1,y_2)} | \na v |^2   < +\infty $$
Moreover, one can establish {\em Saint-Venant estimates} on $v$, namely exponential decay estimates for $v$ as $y_3$ goes to infinity. This exponential decay is related to the periodicity in the horizontal variables, which provides a Poincar\'e inequality for functions with zero mean in $x_1$. When the periodicity assumption is removed, one expects the exponential convergence to be no longer true : this has been notably discussed in article \cite{DGVNMnoslip,Prange}, in the context of the Laplace or the Stokes equation near a rough wall. It is worth noting that in such context, the convergence can be arbitrarily slow. In fact, there is in general no convergence  when no ergodicity assumption on $\gamma$ is made. A remarkable feature of our theorem for rotating flows is that decay to zero persists, despite the nonlinearity, and without any ergodicity assumption on $\gamma$. We emphasize that this decay comes from the rotation term. However, exponential decay is replaced by polynomial decay, with rate $O(y_3^{-1/3})$ for $v$. 

\medskip
Let us comment on the difficulties associated with Theorem \ref{Thm_WP}. Of course, the first issue is that  the data $\phi$ does not decay as $(y_1,y_2)$ goes to infinity, so that the solution $v$ is not expected to decay in the horizontal directions. 
If $\Omega_{bl}$ were replaced by 
$$\Omega_{bl}^M \defin \{y,  M > y_3 > \gamma(y_1,y_2)\}, \quad M > 0$$ 
 together with a Dirichlet condition at the upper boundary, one could build a solution $v$ in $H^1_{uloc}(\Omega_{bl}^M)$, adapting ideas of Ladyzenskaya and  Solonnikov  \cite{LS} on  Navier-Stokes flows in tubes. Among those ideas,  the main one is to obtain an {\it a priori} differential inequality on the  local energy 
$$ E(t)  \defin \int_{\{|(y_1,y_2)| \le t\}}  \int_{\{M > y_3 > \gamma(y_1,y_2)\}} |\na v|^2.  $$
Namely, one shows an inequality of the type
$$ E(t) \le C_M ( E'(t) + E'(t)^{3/2} + t^2). $$
However, the derivation of this differential inequality relies on the Poincar\'e inequality between two planes, or in other words on the fact that $\Omega_{bl}^M$ has a bounded direction. For the boundary layer domain $\Omega_{bl}$, this is no longer true, and  no  a priori bound can be obtained in this way. Moreover, contrarily to what happens for the Laplace equation, one can not rely on maximum principles to get an $L^\infty$ bound.   

\medskip
Under a periodicity assumption on $\gamma$, one can restrict the domain to the periodic slab: $ \{ y, \: (y_1,y_2) \in \T^2, \quad y_3 > \gamma(y_1,y_2)\}$. In this manner, one has again a domain with a bounded direction (horizontal rather than vertical). The analogue of Solonnikov's idea in this context leads to the Saint Venant estimates mentioned above. This allows to prove well-posedness of the boundary layer system. However, this approach does not work in our framework, where no structure is assumed on the roughness profile $\gamma$.

\medskip
 For the Stokes boundary layer flow: 
\begin{equation} \label{StokesBL} 
-\Delta v + \na p = 0, \quad \div v = 0 \quad \inside \Omega_{bl}, \quad v\vert_{\pa \Omega_{bl}} = v_0 
\end{equation}
this problem is overcome in paper \cite{DGVNMnoslip}, by N. Masmoudi and the second author. The main idea there is to get back to the domain $\Omega_{bl}^M$, by imposing a so-called transparent boundary condition at $y_3 = M$. This transparency condition involves the Stokes analogue of the Dirichlet-to-Neumann operator, and despite its non-local nature (contrary to the Dirichlet condition) allows then to apply  the method of Solonnikov. We refer to \cite{DGVNMnoslip} for more details\footnote{Actually, article \cite{DGVNMnoslip}  is concerned with the 2d case. For adaptation to 3d, we refer to \cite{DalibardPrange}.}.  Of course, the use of an explicit transparent boundary condition at $y_3 = M$ is possible because $v$ satisfies a homogeneous Stokes equation in the half-space $\{y_3 > M\}$, which gives access to explicit formulas.

\medskip
Such simplification does not occur in the context of our rotating flow system: in particular, the main issue is the quasilinear term $u \cdot \na u$ in system \eqref{BL}, in contrast with previous linear studies. In fact, even without this convective term, the analysis is uneasy. In other words, the Coriolis-Stokes problem \eqref{BL-lin}
\begin{equation}  \label{BL-lin}
\left\{
\begin{aligned}
 e \times v  + \na p - \Delta v  & =  0 \quad \mbox{in} \: \Omega_{bl} \\
\div v & = 0  \quad \mbox{in} \: \Omega_{bl} \\
v\vert_{\pa \Omega_{bl}} & = \phi.
\end{aligned}
\right.
\end{equation}
already raises difficulties. For instance, to use a strategy based on a transparent boundary condition,  one needs to construct the solution of the  Dirichlet problem in a  half-space for the Stokes-Coriolis operator, when the Dirichlet data has uniform local bounds.  But contrary to the Stokes case,  there is no easy integral representation.  Still, such linear problem was tackled in the recent paper \cite{DalibardPrange}, by the first author and C. Prange.  To solve the Dirichlet problem, they use a Fourier transform in variables $y_1,y_2$, leading to accurate formulas. The point is then to be able to translate information on the Fourier side to uniform local bounds on $v$. This requires careful estimates, as spaces like $L^2_{uloc}$ are defined through truncations in space, which are not so suitable for a Fourier treatment. Similar difficulties arise in paper \cite{ABZ}, devoted to water waves equations in locally uniform spaces.    

\medskip
The linear study \cite{DalibardPrange} is a starting point for our study of the nonlinear system \eqref{BL}, but we  will need many refined estimates, combined with a fixed point argument. More precisely, the outline of the paper is the following.
\begin{itemize}
\item The third and main section of the paper will be devoted to the following system: 
\begin{equation} \label{SC_source}
\left\{
\begin{aligned}
 e \times v  + \na p - \Delta v  & =  {\rm \div} F \inside \{y_3 > M\} \\
\div v & = 0  \inside \{y_3 > M\} \\
v\vert_{y_3 = M} & = v_0.
\end{aligned}
\right.
\end{equation}
The data $v_0$ and  $F$ will have no decay in horizontal variables $(y_1,y_2)$. The source term $F$, which is reminiscent of $u \otimes u$, will decay typically like $|y_3|^{-2/3}$ as $y_3$ goes to infinity. This exponent  is coherent with the decay of $u$ given in Theorem \ref{Thm_WP}.  The point  will be to establish {\it a priori} estimates on a solution $v$ of \eqref{SC_source}, with no decay  in $(y_1,y_2)$, decaying like $|y_3|^{-1/3}$ at infinity.  Functional spaces will be specified in due course.  
\item On the basis of  previous {\it a priori estimates}, we will show well-posedness of the system  
\begin{equation} \label{NSC1}
\left\{
\begin{aligned}
v \cdot \na v + e \times v  + \na p - \Delta v  & = 0 \inside \{y_3 > M\} \\
\div v & = 0  \inside \{y_3 > M\} \\
v\vert_{ \{y_3 = M\} } & = v_0.
\end{aligned}
\right.
\end{equation}
for small enough boundary data $v_0$ (again, in a functional space to be specified). This will be done in the first paragraph of the fourth section. 
\item Finally, through the next paragraphs of the fourth section, we will establish Theorem~\ref{Thm_WP}. The solution $v$ of \eqref{BL} will be constructed with the help of  a mapping ${\cal F} = {\cal F}(\psi,\phi)$, defined  in the following way.  
\begin{enumerate}
\item First, we will introduce the solution $(v^-,p^-)$ of 
\begin{equation} \label{NSC2}
\left\{
\begin{aligned}
v^- \cdot \na v^- +  e \times v^-  + \na p^- - \Delta v^-  & = 0 \inside \Omega_{bl}^M  \\
\div v^- & = 0  \inside \Omega_{bl}^M \\
v^-\vert_{\pa \Omega_{bl}}  & = \phi, \\ 
\Sigma(v^-,p^-) e_3 \vert_{\{y_3 = M\} } & = \psi, 
\end{aligned}
\right.
\end{equation}
where  $\Sigma(v,p)= \na v - \left(p+\frac{|v|^2}{2}\right) \mathrm{Id}$.  Note that a  quadratic term $\frac{|v|^2}{2}$ is added to the usual Newtonian tensor in order to handle the nonlinearity.
\item Then, we will introduce the solution $(v^+,p^+)$ of \eqref{NSC1}, with $v_0 :=  v^-\vert_{y_3 = M}$. 
\item Eventually, we will define ${\cal F}(\psi, \phi) \: := \:  \Sigma(v^+,p^+) e_3  \vert_{\{y_3 = M\} } -\psi$. 
\end{enumerate}
The point will be to show that for small enough $\phi$, the equation ${\cal F}(\psi, \phi) = 0$ has  a solution $\psi$, knowing that ${\cal F}(0,0) = 0$. This will be obtained {\it via} the inverse function theorem (pondering on the linear analysis of \cite{DalibardPrange}).  For such  $\psi$, the field $v$ defined by $v^\pm$ over $\{ \pm y_3 > M\}$ will be a solution of \eqref{BL}. Indeed, $v$ is always continuous at $y_3=M$ by definition of $v^+$, while the condition ${\cal F}(\psi, \phi) =0$ means that the normal component of the stress tensor $\Sigma(v,p) $ is also continuous at $y_3=M$. 
\end{itemize}

\section{Stokes-Coriolis equations with source}
A central part of the work is the analysis of system \eqref{SC_source}. For simplicity, we take $M = 0$. The case without source term ($F =0$) was  partially analyzed in \cite{DalibardPrange}, but we will establish new estimates, notably related to low frequencies. 
Let us emphasize that the difficulty induced by low frequencies already appeared in \cite[Proposition 2.1, page 6]{DalibardPrange}, even in the case of classical Sobolev data: in such case, some cancellation of  the Fourier transform $\hat{v}_{0,3}$  at frequency $\xi = 0$ was assumed.  We make a similar  hypothesis here.  The main theorem of the section is 
\begin{theorem} \label{thm_SCsource}
Let $m \in \N$, $m \gg 1$. Let  $v_0 \in H^{m+1}_{uloc}(\R^2)$ satisfying $v_{0,3} = \pa_1 \nu_1 + \pa_2 \nu_2$, with $\nu_1$, $\nu_2$ in 
$L^2_{uloc}(\R^2)$. Let $F\in H^m_{loc}(\R^3_+)$ such that $(1+y_3)^{2/3}  F \in H^m_{uloc}(\R^3_+)$. There exists a unique solution  $v$ of system \eqref{SC_source} such that 
\begin{multline} \label{theestimate}
\|(1+y_3)^{1/3} v \|_{H^{m+1}_{uloc}(\R^3_+)} \\ 
\le C \left(  \| v_0 \|_{H^{m+\frac{1}{2}}_{uloc}(\R^2)} +  \| (\nu_1, \nu_2) \|_{L^2_{uloc}(\R^2)} + � \|�(1+y_3)^{2/3} F \|_{H^m_{uloc}(\R^3_+)}\right)
\end{multline}
for a universal constant $C$. 
\end{theorem}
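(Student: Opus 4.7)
The plan is to follow the Fourier strategy initiated in \cite{DalibardPrange} for the homogeneous Stokes-Coriolis problem, and to extend it in two directions: to accommodate the source term $\mathrm{div}\, F$, and to upgrade the $L^2$-Fourier estimates into $H^{m}_{uloc}$ estimates with the anisotropic weight $(1+y_3)^{1/3}$.

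\medskip
As a first step, I would split $v = v^{\mathrm{lift}} + \tilde v$, where $v^{\mathrm{lift}}$ is a divergence-free lift of $v_0$ supported near $\{y_3 = 0\}$. The hypothesis $v_{0,3} = \pa_1 \nu_1 + \pa_2 \nu_2$ is crucial here: it encodes the vanishing of the horizontal Fourier transform of $v_{0,3}$ at $\xi = 0$, a compatibility condition without which no lift with reasonable vertical decay can be produced. One is thereby reduced to solving the Stokes-Coriolis system with zero Dirichlet data and a modified source $\mathrm{div}\, \tilde F$ still satisfying the weighted $H^m_{uloc}$ bound.

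\medskip
Next, for the half-space problem with zero boundary data, I would work on the Fourier side in the horizontal variables. For each frequency $\xi \in \R^2$, one obtains a linear ODE in $y_3$, whose characteristic roots, as computed in \cite{DalibardPrange}, have real parts transitioning smoothly from $\sim |\xi|$ at high frequencies to $\sim |\xi|^{1/2}$ at low frequencies. This yields an explicit Green's kernel $G(\xi, y_3, s)$, and the solution reads
$$\hat v(\xi, y_3) = \int_0^{+\infty} G(\xi, y_3, s)\, \widehat{\mathrm{div}\, \tilde F}(\xi, s)\, ds,$$
after which one integrates by parts in $s$ to transfer the divergence onto $G$. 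The core of this step is to derive sharp pointwise bounds on $G$ and its first derivatives in $\xi$ and $y_3$, separating the low-frequency regime, where the polynomial weight $(1+y_3)^{1/3}$ emerges from the balance between $e^{-|\xi|^{1/2} y_3}$-type decay of $G$ and the $s^{-2/3}$ decay of $\tilde F$, from the high-frequency regime, where classical Stokes-type exponential decay suffices.

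\medskip
The main obstacle comes at the final stage: transferring these $L^2$-based Fourier estimates to the $H^m_{uloc}$ setting. The Fourier transform and the uloc truncation in $(y_1,y_2)$ are poorly compatible, so I would adapt the cutoff method of \cite{DGVNMnoslip} and \cite{DalibardPrange}: for each $k \in \Z^2$, multiply by a horizontal cutoff $\chi_k$ supported in $B(k,1)$, and split the convolution against $G$ into a near-field piece, essentially localized and amenable to standard Fourier estimates, and a far-field piece that must be controlled through the $y_3$-weight and the $|\xi|$-behavior of $G$. The non-local tail is precisely where the novelty of the theorem lies, since the slow polynomial decay in $y_3$ leaves very little room to absorb horizontal correlations while preserving uniformity in $k$. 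Once the weighted $L^2_{uloc}$ bound is established, the passage to $H^{m+1}_{uloc}$ follows from interior and boundary elliptic regularity for the Stokes-Coriolis operator, applied cell by cell and summed in the uloc sense.
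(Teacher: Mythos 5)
Your overall architecture (Fourier analysis in the horizontal variables, a Green's kernel for the resulting ODE in $y_3$, integration by parts to absorb the divergence, a low/high frequency splitting, localization to pass to $H^m_{uloc}$, and elliptic regularity for the top-order norms) matches the paper's strategy. But there is a critical error at the heart of your low-frequency analysis: you assert that the characteristic roots have real parts behaving like $|\xi|^{1/2}$ as $\xi \to 0$, and you derive the weight $(1+y_3)^{1/3}$ from a balance between $e^{-|\xi|^{1/2} y_3}$ and the $s^{-2/3}$ decay of the source. This is wrong. The characteristic equation is $-\lambda^2 - (\lambda^2 - |\xi|^2)^3 = 0$; at $\xi = 0$ it has a double root at $\lambda = 0$ and four roots with $\lambda^4 = -1$. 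The perturbation of the double root gives $\lambda_1(\xi) = |\xi|^3 + O(|\xi|^5)$ (Lemma \ref{asymptote_lambda}), while $\lambda_2, \lambda_3 \to e^{\pm i\pi/4}$ remain bounded away from zero. The polynomial decay rate $1/3$ is tied precisely to this \emph{cubic} vanishing: the kernel $\chi(D)P(D)e^{-\lambda_1(D)z}$ is self-similar in the variable $\xi z^{1/3}$, so a symbol $P$ homogeneous of degree $\alpha$ near $\xi=0$ yields decay $(1+z)^{-\alpha/3}$ (Lemma \ref{lemmadecay}). With your claimed $|\xi|^{1/2}$ root the self-similar variable would be $\xi z^{2}$ and the resulting decay rates would be entirely different; your "balance" cannot produce the exponent $1/3$ and the argument does not close.

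A second, related gap: you use the hypothesis $v_{0,3} = \pa_1\nu_1 + \pa_2\nu_2$ only to build a decaying lift, and you then treat the remaining problem as a pure source problem with zero Dirichlet data. The real role of this hypothesis is to guarantee that the coefficient multiplying $e^{-\lambda_1(\xi)z}$ in the homogeneous boundary-value solve carries a positive power of $\xi$ at low frequencies, without which Lemma \ref{lemmadecay} gives no decay at all (the case $\alpha = 0$ is excluded). Moreover, when the horizontal components $v_1, v_2$ are recovered via $|D|^{-2}(D_1 \pa_z v_3 + D_2\omega)$, the terms attached to $\lambda_2, \lambda_3$ involve symbols homogeneous of degree zero (Riesz-transform type), and one must exhibit explicit cancellations in the coefficients $C_2, C_3$ to handle them; your proposal does not address this, and it is one of the genuinely delicate points of the proof.
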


Prior to the proof of the theorem, several simplifying remarks are in order: 
\begin{itemize}
\item  Obviously, uniqueness comes down to showing that if $F =0$ and $v_0 = 0$, the only solution $v$ of \eqref{SC_source} such that $(1+y_3)^{1/3} v \in H^m_{uloc}(\R^3_+)$ is  $v =0$. This result follows from \cite[Proposition 2.1]{DalibardPrange}, in which even a larger functional space was considered. Hence, the key statement our theorem is the existence of a solution satisfying the estimate \eqref{theestimate}. 
\item In order to show existence of such a solution, we can assume  $v_{0,1}, v_{0,2}$,  $\nu \defin (\nu_1,\nu_2)$ and $F$ to be smooth and compactly supported (resp. in $\R^2$ and $\R^3_+$). Indeed, let us introduce 
$$\ba
 (v_{0,1}^n, v_{0,2}^n, \nu^n)(y_1,y_2) \: := \:  \chi((y_1,y_2)/n)  \: \rho^n \star (v_{0,1}, v_{0,2}, \nu)(y_1,y_2), \\
  F^n(y)  \: := \:\tilde \chi(y/n) \:  \tilde \rho^n(y) \star F(y)
  \ea$$
where $\chi \in C^\infty_c(\R^2)$, $\tilde \chi \in C^\infty_c(\R^3)$ are $1$ near the origin,  and $\rho^n$, $\tilde \rho^n$ are approximations of unity. These functions are smooth, compactly supported, and satisfy 
\begin{multline*}
 \| (v_{0,1}^n, v_{0,2}^n) \|_{H^{m+1}_{uloc}(\R^2)} \le C  \| (v_{0,1}, v_{0,2}) \|_{H^{m+1}_{uloc}(\R^2)}, \quad    \| \nu^n \|_{H^{m+2}_{uloc}(\R^2)} \le C \| \nu \|_{H^{m+2}_{uloc}(\R^2)}, \\
    \|(1+y_3)^{2/3} F^n \|_{H^m_{uloc}(\R^3_+)} \le C  \|(1+y_3)^{2/3} F \|_{H^m_{uloc}(\R^3_+)}. 
\end{multline*}
for a universal constant $C$. Moreover, $(v_{0,1}^n, v_{0,2}^n)$, $\nu^n$ and $F^n$ converge strongly  to $(v_{0,1}, v_{0,2})$, $\nu$ and $F$ in $H^{m+1}(K)$, $H^{m+2}(K)$ and $H^m(K')$ respectively,  for any compact set $K$ of $\R^2$, any compact set $K'$ of $\R^3_+$.   Now, assume that for all $n\in \N$, there exists a solution $v^n$ corresponding to the data $v_{0,1}^n, v_{0,2}^n,\nu^n, F^n$, for which we can get the estimate 
\begin{multline*}
 \|(1+y_3)^{1/3} v^n \|_{H^{m+1}_{uloc}(\R^3_+)}  \\
  \le C \left(\| (v_{0,1}^n, v_{0,2}^n) \|_{H^{m+1}_{uloc}(\R^2)} +  \| \nu^n \|_{H^{m+2}_{uloc}(\R^2)} +  \|(1+y_3)^{2/3} F^n \|_{H^m_{uloc}(\R^3_+)} \right) 
 \end{multline*}
 for a universal constant $C$. Then, 
\begin{multline*}
 \|(1+y_3)^{1/3} v^n \|_{H^{m+1}_{uloc}(\R^3_+)}  \\ \le C' \left( \| (v_{0,1}, v_{0,2}) \|_{H^{m+1}_{uloc}(\R^2)} + \| \nu \|_{H^{m+2}_{uloc}(\R^2)} +  \|(1+y_3)^{2/3} F \|_{H^m_{uloc}(\R^3_+)} \right) 
 \end{multline*}
 for a universal constant $C'$. 
 We can then extract a  subsequence  weakly converging to some $v$, which is easily seen to satisfy \eqref{SC_source} and \eqref{theestimate}.
 \item Finally, if $v_{0,1}, v_{0,2}$, $\nu$ and $F$ are smooth and compactly supported, the existence of a solution $v$ of \eqref{SC_source} can be obtained by standard variational arguments. More precisely,  one can build a function $v$ such that 
\begin{eqnarray*}
\int_{\R^3_+} | \na v |^2  &\le &C \left( \|F \|_{L^2(\R^2)} \: + \:  \| v_0 \|_{H^{1/2}(\R^2)}\right), \\
 \int_{\R^2 \times \{ y_3 < a \}} | v |^2   &\le& C_a \left( \|F \|_{L^2(\R^2)} +  \| v_0 \|_{H^{1/2}(\R^2)} \right) \:  \quad \forall a > 0. 
 \end{eqnarray*}
 Higher order derivatives are then controlled by elliptic regularity. 
 {\em Hence, the whole problem is to establish the estimate \eqref{theestimate} for such a solution}.   
\end{itemize}
We are now ready to tackle the proof of Theorem \ref{thm_SCsource}. We  forget temporarily about the boundary condition and focus  on the equations 
\begin{equation} \label{SC_eq} 
 e \times v  + \na p - \Delta v   = \div F, \quad \div v = 0 \inside \R^3_+, 
 \end{equation}
Our goal is to construct some particular solution of these equations, satisfying for some large enough $m$: 
\begin{equation} \label{estimate_source}
\|(1+z)^{1/3} v \|_{L^\infty} \le C \:  \|(1+z)^{2/3}  F \|_{L^\infty(H^m_{uloc})}. 
\end{equation}
We will turn to the solution of the whole system  \eqref{SC_source} in a second step.

\subsection{Orr-Sommerfeld formulation} 
To handle \eqref{SC_eq}, we rely on a formulation similar to the Orr-Sommerfeld rewriting of Navier-Stokes. Namely, we wish to express this system  in terms of $v_3$ and $\omega \defin \pa_1 v_2 - \pa_2 v_1$. First, we  apply  $\pa_2$ to the first line, $-\pa_1$ to the second line, and combine to obtain 
\begin{equation} \label{OS1}
 \pa_3 v_3 + \Delta \omega = s_3 \defin \pa_2 f_1 - \pa_1 f_2, \quad \mbox{with} \:  f \defin \div F = (\sum_j \pa_j F_{ij})_{i}. 
  \end{equation}
Similarly, we apply  $\pa_1 \pa_3$ to the first line of \eqref{SC_eq}, $\pa_2 \pa_3$ to the second line, and $-(\pa_1^2 + \pa_2^2)$ to the third line. Combining the three, we are left with 
\begin{equation} \label{OS2}
 -\pa_3 \omega + \Delta^2 v_3  = s_\omega  \defin \pa_1 \pa_3 f_1 + \pa_2 \pa_3 f_2 -(\pa_1^2 + \pa_2^2) f_3.
 \end{equation}
From $\omega$ and $v_3$,   one  recovers the horizontal velocity components $v_1, v_2$ using the system 
 $$ \pa_1 v_1 + \pa_2 v_2 = - \pa_3 v_3,  \quad  \pa_1 v_2 - \pa_2 v_1 = \omega. $$

We are led to the (so far formal) expressions 
 \begin{equation} \label{formulav1v2}
 \begin{aligned}
  v_1 & =(\pa_1^2+\pa_2^2)^{-1} \left(- \pa_3 \pa_1 v_3 - \pa_2 \omega \right) \\
 v_2 & = (\pa_1^2+\pa_2^2)^{-1} \left( -\pa_3 \pa_2 v_3 + \pa_1 \omega \right).
 \end{aligned}
 \end{equation}
Our goal is to construct a solution $(v_3, \omega)$  of \eqref{OS1}-\eqref{OS2},  by means of an integral representation. Since the vertical variable will play a special role in this construction, we will denote it by $z$ instead of $y_3$: $y = (y_1,y_2,z)$. We write \eqref{OS1}-\eqref{OS2} in the compact form 
$$ L(D,\pa_z) V = S,  \quad V \defin \left( \begin{smallmatrix} v_3 \\   \omega \end{smallmatrix} \right), \quad S \defin \left( \begin{smallmatrix}   s_3\\ s_\om \end{smallmatrix} \right), \quad  D \defin \frac{1}{i} (\pa_1,\pa_2), $$
where $L(D,\pa_z)$ is a Fourier multiplier in variables $x_1,x_2$ associated with 
$$ L(\xi, \pa_z) \defin \left( \begin{matrix} \pa_z & (\pa^2_z - |\xi|^2)\\ (\pa^2_z - |\xi|^2)^2 & -\pa_z  \end{matrix} \right). $$ 
We will look for a solution under the form 
\begin{equation} \label{integralV}
 V(\cdot,z)  = \int_0^{+\infty} G(D,z-z')  S(\cdot, z') \, dz' \: + \: V_h 
 \end{equation}
where 
\begin{itemize}
\item $G(D,z)$ is a matrix Fourier multiplier, whose symbol $G(\xi,z)$ is the fundamental solution over $\R$ of  $L(\xi,\pa_z)$ for any $\xi \in \R^2$:
$$ L(\xi,\pa_z) G(\xi,z) = \delta_{z=0} \,  \left( \begin{smallmatrix} 1 & 0 \\ 0 & 1   \end{smallmatrix} \right). $$
\item $V_h$ is a solution of the homogeneous equation. The purpose of the addition of  $V_h$ is to ensure the decay of the solution $V$. More details will be given in due course. 
\end{itemize}
\subsubsection{Construction of the Green function.}
We start with the construction of the fundamental solution $G(\xi,z)$. Away from $z=0$, it should satisfy the homogeneous system, which requires to understand the kernel of the operator $L(\xi,\pa_z)$. This kernel is a combination  of elements of the form $e^{\lambda z} V$, where 
$\lambda$ is a root of the characteristic equation
\begin{equation} \label{eqn_roots}
\det L(\xi,\lambda) = 0, \quad \mbox{\it i.e.} \:  - \lambda^2 -(\lambda^2-|\xi|^2)^3= 0, 
\end{equation}
and $V$ an associated  ``eigenelement", meaning a non-zero vector in  $\ker L(\xi,\lambda)$. A careful study of the characteristic equation was carried recently by the first author and C. Prange in \cite{DalibardPrange}.  Notice that \eqref{eqn_roots} can be seen as an equation of degree three on $Y= \lambda^2-|\xi|^2$  (with negative discriminant). Using Cardano's formula gives access to explicit expressions. The roots can be written $\pm \lambda_1(\xi)$, $\pm \lambda_2(\xi)$ and $\pm \lambda_3(\xi)$, where $\lambda_1 \in \R_+$, $\lambda_2,\lambda_3$ have positive real parts,  $\lambda_1 \in \R$, $\overline{\lambda_2} = \lambda_3$, $\textrm{Im} \lambda_2 > 0$.  The $\lambda_i$'s are continuous functions of $\xi$ (see Remark \ref{remark_roots} below for more). 
Article \cite{DalibardPrange} also provides their asymptotic behaviour at low and high frequencies. This behaviour will be very important to establish our estimates.
\begin{lemma} {\bf (\cite[Lemma 2.4]{DalibardPrange})} \label{asymptote_lambda}

As $\xi \rightarrow 0$, we have
\begin{equation*}
\lambda_1(\xi)  = |\xi|^3 + O(|\xi|^5), \quad
\lambda_2(\xi)  = e^{i \frac{\pi}{4}}+ O(|\xi|^2), \quad
\lambda_3(\xi)  = e^{-i \frac{\pi}{4}}+ O(|\xi|^2), \quad
\end{equation*}

As $\xi \rightarrow \infty$, we have 
\begin{multline*}
\lambda_1(\xi)  = |\xi| - \frac{1}{2} |\xi|^{-1/3} + O(|\xi|^{-5/3}), \quad 
\lambda_2(\xi)  = |\xi| - \frac{j^2}{2} |\xi|^{-1/3} + O(|\xi|^{-5/3}), \\
\lambda_3(\xi)  = |\xi| - \frac{j}{2} |\xi|^{-1/3} + O(|\xi|^{-5/3}),  \quad \mbox{where $j = \exp(2i\pi/3)$}. 
\end{multline*}
\end{lemma}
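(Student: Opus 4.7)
The characteristic equation \eqref{eqn_roots} is $\lambda^2 + (\lambda^2 - |\xi|^2)^3 = 0$. Setting $Y := \lambda^2 - |\xi|^2$, this becomes the cubic
\[
Y^3 + Y + |\xi|^2 = 0,
\]
and the six roots $\lambda$ are recovered via $\lambda = \pm\sqrt{Y + |\xi|^2}$. The plan is to analyze this cubic in $Y$ in both asymptotic regimes and then take the appropriate square roots, being careful with branch selection to match the labeling ($\lambda_1 \in \R_+$, $\mathrm{Im}\,\lambda_2 > 0$, $\lambda_3 = \overline{\lambda_2}$).

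For $|\xi| \to 0$, I would treat $|\xi|^2$ as a small perturbation parameter. At $|\xi|=0$ the cubic $Y^3 + Y = 0$ has roots $Y = 0, i, -i$, each simple, so by the implicit function theorem we obtain three smooth branches of the form $Y_0 = -|\xi|^2 + O(|\xi|^6)$, $Y_\pm = \pm i + O(|\xi|^2)$. For the first branch, $\lambda^2 = Y_0 + |\xi|^2 = |\xi|^6 + O(|\xi|^{10})$, so $\lambda = \pm|\xi|^3 + O(|\xi|^5)$, which gives the real positive root $\lambda_1(\xi) = |\xi|^3 + O(|\xi|^5)$. For the two complex branches, $\lambda^2 = \pm i + O(|\xi|^2)$, whose square roots with positive real part are $e^{\pm i\pi/4} + O(|\xi|^2)$; choosing the one with positive imaginary part for $\lambda_2$ gives the claimed expansions.

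For $|\xi| \to \infty$, the natural ansatz is $\lambda = |\xi| + \mu$ with $\mu = o(|\xi|)$. A direct expansion gives $\lambda^2 - |\xi|^2 = 2|\xi|\mu + \mu^2$, and substitution into \eqref{eqn_roots} yields at leading order $|\xi|^2 + 8|\xi|^3 \mu^3 + \text{l.o.t.} = 0$, hence $\mu^3 \sim -\frac{1}{8|\xi|}$. The three cube roots give $\mu \sim -\tfrac{\alpha}{2}|\xi|^{-1/3}$ with $\alpha \in \{1, j, j^2\}$. Refining the ansatz to $\lambda = |\xi| - \tfrac{\alpha}{2}|\xi|^{-1/3} + \beta$ and equating the next order, the relation $\alpha^4 = \alpha$ (from $\alpha^3=1$) produces a $|\xi|^{2/3}$ term that forces $\beta = O(|\xi|^{-5/3})$. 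Assigning $\alpha = 1$ to $\lambda_1$ (real), $\alpha = j^2$ to $\lambda_2$ (so that $-\tfrac{j^2}{2} = \tfrac{1}{2}e^{i\pi/3}$ has positive imaginary part), and $\alpha = j$ to $\lambda_3 = \overline{\lambda_2}$, yields exactly the stated expansions. The main technical point is the verification of consistency of the labeling between the two regimes, which uses the continuity of the $\lambda_i$ in $\xi$ (cf.\ Remark \ref{remark_roots}) and the fact that the discriminant of the cubic in $Y$ remains negative for all $|\xi| > 0$, so no root collisions occur and the labels can be extended globally.
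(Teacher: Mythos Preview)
Your argument is correct. The substitution $Y=\lambda^2-|\xi|^2$ reducing \eqref{eqn_roots} to the cubic $Y^3+Y+|\xi|^2=0$, followed by perturbation of the simple roots $Y=0,\pm i$ at $|\xi|=0$ and the ansatz $\lambda=|\xi|+\mu$ with $\mu\sim -\tfrac{\alpha}{2}|\xi|^{-1/3}$ at infinity, is exactly the natural route, and your branch assignments are consistent with the conventions fixed before the lemma. One small slip of presentation: writing $Y_0=-|\xi|^2+O(|\xi|^6)$ and then $\lambda^2=|\xi|^6+O(|\xi|^{10})$ skips a step, since the first expansion alone only gives $\lambda^2=O(|\xi|^6)$; you implicitly use the next coefficient $Y_0=-|\xi|^2+|\xi|^6+O(|\xi|^{10})$, which follows immediately from substituting back into the cubic.

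As for comparison with the paper: there is nothing to compare. The paper does not prove this lemma; it is quoted verbatim from \cite[Lemma 2.4]{DalibardPrange} and used as a black box. Your computation therefore supplies a self-contained proof that the present paper omits.
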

\begin{remark}  \label{remark_roots}
We insist that $\lambda_2$ et $\lambda_3$ are distinct and have a  positive real part for all values of  $\xi$, whereas $\lambda_1 \neq 0 $  for $\xi \neq  0$. Moreover, it can be easily checked that $\lambda_i^2$ is a $\mathcal C^\infty$ function of $|\xi|^2$ for $i=1..3$. Using the fact that $\lambda_2$ and $\lambda_3$ never vanish or merge, while $\lambda_1$ vanishes for $\xi=0$ only, we deduce that $\lambda_2, \lambda_3$ are $\mathcal C^\infty $ functions of $|\xi|^2$, and that $\lambda_1(\xi)= |\xi|^3 \Lambda_1(\xi)$, where $\Lambda_1\in \mathcal C^\infty(\R^2)$, $\Lambda_1(0)=1$ and $\Lambda_1$ does not vanish on $\R^2$.
 \end{remark}
As regards the eigenelements, an explicit computation shows that for all $i=1...3$, 
\begin{equation} \label{eigenelements}
 V_i^\pm \defin \left( \begin{smallmatrix} 1 \\ \pm \Omega_i \end{smallmatrix} \right), \quad \Omega_i \defin \frac{-\lambda_i}{\lambda_i^2 - |\xi|^2}  \quad  \mbox{satisfy    $L(\xi,\pm \lambda_i) V_i^\pm  = 0$.} 
 \end{equation}
 
 We can now determine $G$; our results are summarized in Lemma \ref{lem:G} below. We begin with its first column $G_1 = \left( \begin{smallmatrix} G_{11} \\ G_{21} \end{smallmatrix} \right)$, solution of $L(\xi,\pa_z) G_1  = \delta \,  \left( \begin{smallmatrix}  1 \\ 0 \end{smallmatrix} \right)$. 
As explained above, for $z\neq 0$, $G_1(\xi, z)$ is a linear combination  of $e^{\pm\lambda_i z} V_i^\pm$. Furthermore, we want to avoid any exponential growth of $G$ as $z\to \pm \infty$.
 Thus $G_1$ should be of the form 
 \begin{equation*}
  G_1 = \left\{
 \begin{aligned}
 & \sum_{i=1}^3  A_i^+ e^{-\lambda_i z} V_i^-, \quad z > 0, \\
& \sum_{i=1}^3  A_i^- e^{\lambda_i z} V_i^+, \quad z < 0.
\end{aligned}
\right.
\end{equation*}
We now look at the jump conditions at $z=0$. We recall that for $f = f(z)$, $[f]\vert_{z=z'} \defin f(z^{'+}) - f(z^{'-})$ denotes the jump of $f$ at $z'$. Since
\begin{equation*}
\left\{
\begin{aligned}
(\pa_z^2 -|\xi|^2)^2 G_{11} - \pa_z G_{21} & = 0, \\
\pa_z G_{11} + (\pa_z^2 - |\xi|^2) G_{21} & = \delta_{z=0}, 
\end{aligned}
\right.
\end{equation*}
we infer that 
\begin{equation*}
[G_{21}]\vert_{z=0} = 0,\quad  [\pa_z G_{21}]\vert_{z=0} = 1, \quad [\pa^k_z G_{11}]\vert_{z=0} = 0, \: k=0...3, \\
\end{equation*} 
This yields a linear system of 6 equations on the coefficients $A_i^\pm$. One finds $A_i \defin A_i^+ = - A_i^-$, and the system
\begin{equation*}
 \sum_i \lambda_i \Omega_i A_i  = \frac{1}{2}, \quad \sum _i A_i = 0, \quad \sum_i \lambda_i^2 A_i = 0 .
 \end{equation*}
 Note that 
 $$ \sum_i \lambda_i \Omega_i A_i = -\sum_i \frac{\lambda_i^2}{\lambda_i^2 - |\xi|^2} A_i = - \sum_i \frac{|\xi|^2}{\lambda_i^2 - |\xi|^2} A_i $$
 taking into account the second equality. Hence, we find 
 $$ \begin{pmatrix}  \frac{|\xi|^2}{\lambda_1^2 - |\xi|^2} &  \frac{|\xi|^2}{\lambda_2^2-|\xi|^2} &  \frac{|\xi|^2}{\lambda_3^2-|\xi|^2} \\1 & 1 & 1 \\ \lambda_1^2 & \lambda_2^2 & \lambda_3^2 \end{pmatrix}  \begin{pmatrix} A_1 \\ A_2  \\ A_3 \end{pmatrix} = \begin{pmatrix} -\frac{1}{2} \\ 0  \\0 \end{pmatrix}.$$
 The determinant of the matrix is
 $$ D_1 \defin |\xi|^2 D,$$
 where
 $$
 D\defin \left| \begin{array}{ccc}
\frac{1}{\lambda_1^2 - |\xi|^2} &  \frac{1}{\lambda_2^2-|\xi|^2} &  \frac{1}{\lambda_3^2-|\xi|^2} \\1 & 1 & 1 \\ \lambda_1^2 & \lambda_2^2 & \lambda_3^2
 \end{array}\right|.
 $$
 After a few  computations, we find that
 \begin{equation} \label{D1}
 D_1 = |\xi|^2 (\lambda_2^2 - \lambda_1^2)(\lambda_3^2 - \lambda_1^2) \left( \frac{1}{(\lambda_1^2 - |\xi|^2)(\lambda_2^2 - |\xi|^2)} -  
 \frac{1}{(\lambda_1^2 - |\xi|^2)(\lambda_3^2 - |\xi|^2)} \right), 
 \end{equation}
 and 
 \begin{equation} \label{Ai}
 A_1 = -\frac{1}{2 D_1} (\lambda_3^2 - \lambda_2^2), \quad A_2 = -\frac{1}{2D_1} (\lambda_1^2 - \lambda_3^2), \quad A_3 = -\frac{1}{2D_1}  (\lambda_2^2 - \lambda_1^2).
\end{equation}
%

\medskip
Computations for the second column $G_2$ of $G$ are similar. It is of the form 
 \begin{equation*}
  G_2 = \left\{
 \begin{aligned}
 & \sum_{i=1}^3  B_i^+ e^{-\lambda_i z} V_i^-, \quad z > 0, \\
& \sum_{i=1}^3  B_i^- e^{\lambda_i z}  V_i^+, \quad z < 0,
\end{aligned}
\right.
\end{equation*}
 with jump conditions:
$$\mbox{$[\pa^k_z G_{22}]\vert_{z=0} = 0$, $k=0,1$, $\: [\pa^k_z G_{12}]\vert_{z=0} = 0$, $k=0...2$, $[\pa^3_z G_{12}]\vert_{z=0} = 1$.}$$
 We find $B_i \defin B_i^+ = B_i^-$, and the system 
 $$ \begin{pmatrix}  \Omega_1 &  \Omega_2 &  \Omega_3 \\\lambda_1 & \lambda_2 &  \lambda_3 \\ \lambda_1^3 & \lambda_2^3 & \lambda_3^3 \end{pmatrix}  \begin{pmatrix} B_1 \\ B_2  \\ B_3 \end{pmatrix} = \begin{pmatrix}  0  \\0 \\ -\frac{1}{2} \end{pmatrix}.$$
The determinant of the matrix is now $D_2\defin  -\lambda_1 \lambda_2 \lambda_3 D$, and 
 \begin{equation} \label{Bi}
 \begin{aligned}
 B_1 & = \frac{\lambda_2 \lambda_3}{2 D_2} \left(\frac{1}{\lambda_2^2 - |\xi|^2}  -  \frac{1}{\lambda_3^2 - |\xi|^2} \right), \quad B_2 = \frac{\lambda_1 \lambda_3}{2D_2}  \left(\frac{1}{\lambda_3^2 - |\xi|^2}  -  \frac{1}{\lambda_1^2 - |\xi|^2}\right)  , \\
  B_3 & = \frac{\lambda_1 \lambda_2}{2D_2} \left(\frac{1}{\lambda_1^2 - |\xi|^2}  -  \frac{1}{\lambda_2^2 - |\xi|^2}\right).
 \end{aligned}
 \end{equation}
This concludes the construction of the matrix $G$.  We sum up our results in the following Lemma, in which we also give the asymptotic behaviours of the coefficients $A_i, B_i, V_i^\pm$ and of $G$ as $\xi\to 0$ and $|\xi|\to \infty$. The latter follow from Lemma \ref{asymptote_lambda} and Remark \ref{remark_roots} and are left to the reader.
 \begin{lemma}\label{lem:G}
We have 
\begin{equation*} \label{G1}
  G_1 = \left\{
 \begin{aligned}
 & \sum_{i=1}^3  A_i e^{-\lambda_i z} V^-_i, \quad z > 0, \\
& -\sum_{i=1}^3  A_i e^{\lambda_i z} V^+_i, \quad z < 0,
\end{aligned}
\right.\quad G_2 = \left\{
 \begin{aligned}
 & \sum_{i=1}^3  B_i e^{-\lambda_i z} V_i^-, \quad z > 0, \\
& \sum_{i=1}^3  B_i e^{\lambda_i z} V^+_i, \quad z < 0,
\end{aligned}
\right.
\end{equation*}
where $V_i^\pm= \begin{pmatrix}
1\\ \mp \frac{\lambda_i}{\lambda_i^2 - |\xi|^2}
\end{pmatrix}$ and where $A_i$ and $B_i$ are defined by \eqref{Ai}and  \eqref{Bi} respectively.

\medskip
\noindent
Asymptotic behaviour:
\begin{itemize}
\item For $|\xi|\gg 1$, there exists $N>0$ such that $A_i, B_i, \Omega_i= O(|\xi|^N)$ for $i=1..3$, and $|\Om_i| \gtrsim |\xi|^{-N}$. As a consequence, $G(\xi, z) = O(|\xi|^N)$ for all $z$.

\item As $\xi \to 0$, we have
\begin{equation}\ba
A_i(\xi)\to \bar A_i \in \C^*,\ i=1..3,\ B_1(\xi)\sim \frac{\bar B_1}{|\xi|},\ \bar B_1\in \C^*,\ B_i(\xi)\to \bar B_i \in \C^* \ i=2,3,\\
\Om_1\sim \bar \Om_1 |\xi|,\ \bar \Om_1\in\C^*,\  \Om_i(\xi)\to \bar \Om_i \in \C^*, i=2,3.
\ea
 \end{equation}
 More precisely, we can write for instance
 $$
 B_1(\xi)= \frac{\bar B_1}{|\xi|} \beta_1(\xi)\quad \forall \xi \in \R^2,
 $$
 for some function $\beta_1\in \mathcal C^\infty(\R^2)$ such that $\beta_1(0)=1$. Similar statements hold for the other coefficients.
 
It follows that $G(\xi,z) = \begin{pmatrix} O(1) & O(|\xi|^{-1}) &  \\ O(1) & O(1) \end{pmatrix}$ as $|\xi| \rightarrow 0$ for all $z\in \R$.
\end{itemize}

 \end{lemma}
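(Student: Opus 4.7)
The formulas for $G_1$ and $G_2$ in the statement essentially repackage the construction carried out just above the lemma: on each half-line $\{\pm z > 0\}$ we wrote the ansatz in the basis of eigenelements $V_i^\mp$ corresponding to the roots $\mp \lambda_i$ with negative real part, and the jump conditions at $z=0$ translated into the two $3\times 3$ linear systems that produced \eqref{Ai} and \eqref{Bi}. For the proof of the lemma itself, the first task is thus only to check that the resulting $G$ is indeed the fundamental solution: away from $z=0$ each exponential lies in $\ker L(\xi,\pa_z)$ by \eqref{eqn_roots}--\eqref{eigenelements}, and direct substitution into the jump relations derived in the text shows that $L(\xi,\pa_z) G_i = \delta_{z=0} e_i$. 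Decay as $z \to \pm \infty$ is automatic from $\Re \lambda_i > 0$ for $\xi \neq 0$.

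The heart of the lemma is the asymptotic analysis, which I would organize by plugging the $\lambda_i$-asymptotics from Lemma \ref{asymptote_lambda} directly into the explicit formulas \eqref{Ai}, \eqref{Bi}, and \eqref{eigenelements}. In the high-frequency regime $|\xi|\gg 1$, write $\lambda_i=|\xi|-\alpha_i|\xi|^{-1/3}+O(|\xi|^{-5/3})$ with $\alpha_1=\tfrac12$, $\alpha_2=\tfrac{j^2}{2}$, $\alpha_3=\tfrac{j}{2}$; squaring yields $\lambda_i^2-|\xi|^2 = -2\alpha_i|\xi|^{2/3}+O(|\xi|^{-2/3})$ and $\lambda_i^2-\lambda_k^2 = -2(\alpha_i-\alpha_k)|\xi|^{2/3}+O(|\xi|^{-2/3})$, where the prefactors are nonzero because $1,j,j^2$ are distinct. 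Substituting into \eqref{D1} gives $D_1 \sim c\,|\xi|^{4/3}\cdot|\xi|^{-4/3} = c$ (a nonzero constant up to lower order terms), hence $D_1$ is bounded above and bounded below by positive constants for $|\xi|$ large. From there $A_i,B_i$ are controlled by fixed powers of $|\xi|$, as is $\Omega_i\sim \tfrac{|\xi|^{1/3}}{2\alpha_i}$, which also gives the lower bound $|\Omega_i|\gtrsim |\xi|^{-N}$. Since every entry of $G(\xi,z)$ is a product of such terms with exponentials bounded by $1$, the bound $G(\xi,z)=O(|\xi|^N)$ follows.

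For the low-frequency regime $\xi\to 0$ I would use both Lemma \ref{asymptote_lambda} and Remark \ref{remark_roots}. The limits $\lambda_2^2\to i$, $\lambda_3^2\to -i$, $\lambda_1^2\to 0$ give $\lambda_2^2-\lambda_1^2\to i$, $\lambda_3^2-\lambda_1^2\to -i$, $\lambda_3^2-\lambda_2^2\to -2i$, and $\lambda_j^2-|\xi|^2$ has a nonzero limit for $j=2,3$ while $\lambda_1^2-|\xi|^2\sim -|\xi|^2$. Inserting these in \eqref{D1} the $|\xi|^2$ prefactor exactly cancels the $|\xi|^{-2}$ singularity coming from the bracket, so that $D_1 \to 2i$. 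This immediately yields the finite nonzero limits $\bar A_i$ of $A_i$. For $B_i$ one uses $D_2 = -\lambda_1\lambda_2\lambda_3 D$ with $\lambda_1\sim|\xi|^3$, so $D_2 \sim c\,|\xi|$; then in \eqref{Bi}, for $B_1$ the factor $(\lambda_2^2-|\xi|^2)^{-1}-(\lambda_3^2-|\xi|^2)^{-1}$ is bounded, producing $B_1\sim \bar B_1/|\xi|$, whereas for $B_2,B_3$ the bracket contains a $-(\lambda_1^2-|\xi|^2)^{-1}\sim |\xi|^{-2}$ singularity that combines with the $\lambda_1\sim |\xi|^3$ factor to give a finite nonzero limit. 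The analogous calculation on $\Omega_i$ gives $\Omega_1\sim \bar\Omega_1|\xi|$ and $\Omega_2,\Omega_3$ finite nonzero. The block asymptotic structure of $G(\xi,z)$ then follows by inspection of the scalar product $A_i V_i^\pm$ or $B_i V_i^\pm$.

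The most delicate step, and also the one that upgrades limits to the stated factorizations $B_1(\xi)=\bar B_1\beta_1(\xi)/|\xi|$ with $\beta_1\in C^\infty(\R^2)$, $\beta_1(0)=1$, is the regularity assertion. Here I would invoke Remark \ref{remark_roots}: $\lambda_2^2, \lambda_3^2$ are smooth functions of $|\xi|^2$ on all of $\R^2$, $\lambda_2\lambda_3$ is smooth (being the product of two smooth non-vanishing roots whose product equals a smooth function of $|\xi|^2$ by Vieta), and $\lambda_1^2=|\xi|^6\Lambda_1^2$ with $\Lambda_1\in C^\infty$ nonvanishing. Thus every rational function appearing in $D_1,D_2,A_i,B_i$ is smooth in $|\xi|^2$ after factoring out the explicit powers of $|\xi|$ coming from $\lambda_1$ and from the cancellations in $D_1$. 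The explicit singular factor in $B_1$ is then an overall $|\xi|^{-1}$, and the remaining quantity is smooth in $|\xi|^2$ with value $1$ at $\xi=0$, which gives the claimed form. This bookkeeping — tracking exactly which powers of $|\xi|$ appear and checking they combine into smooth functions of $|\xi|^2$ — is the main technical obstacle, but it is routine once the decomposition of $\lambda_i^2$ furnished by Remark \ref{remark_roots} is used systematically.
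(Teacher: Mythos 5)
Your proposal is correct and follows exactly the route the paper intends: the formulas for $G_1,G_2$ merely summarize the construction preceding the lemma, and the asymptotic statements are obtained by substituting the expansions of Lemma \ref{asymptote_lambda} and the smooth factorizations of Remark \ref{remark_roots} into \eqref{Ai}, \eqref{Bi} and \eqref{eigenelements} — precisely the verification the paper leaves to the reader. One small slip: at high frequency you dropped the explicit $|\xi|^2$ prefactor in \eqref{D1}, so in fact $D_1\sim c\,|\xi|^{2}$ rather than a nonzero constant (it is $D=D_1/|\xi|^2$ that tends to a nonzero constant); this only strengthens the lower bound on $|D_1|$ and leaves the claimed polynomial bounds $A_i,B_i=O(|\xi|^N)$ unaffected.
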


\subsubsection{Construction of the homogeneous correction}
We will see rigorously below that the field 
\begin{equation} \label{VG}
 V_G(\cdot,z) \defin   \int_0^{+\infty} G(D,z-z')  S(\cdot, z') \, dz' =   \int_0^{+\infty} {\cal F}_{\xi \rightarrow (y_1,y_2)}^{-1} \left( G(\cdot,z-z')   {\cal F}_{ (y_1,y_2)\rightarrow \xi } S(\cdot, z') \right) \, dz' 
 \end{equation}
is well-defined and satisfies  \eqref{OS1}-\eqref{OS2}. However, the corresponding velocity field does not have  a good decay with respect to $z$. This is the reason for the additional field $V_h$ in formula \eqref{integralV}. To be more specific, let us split the source term $S$ into $S(z') = S^0(z') + \pa_{z'} S^1(z') + \pa_{z'}^2 S^2(z')$, with 
\be\label{def:S0} S^0(z') \defin \begin{pmatrix} \pa_2 (\pa_1 F_{11} + \pa_2 F_{12}) -  \pa_1 (\pa_1 F_{21} + \pa_2 F_{22})  \\
- (\pa_1^2 + \pa_2^2) (\pa_1 F_{31} + \pa_2 F_{32}) \end{pmatrix}\ee
and
\be\label{def:S12}\begin{aligned}
S^1(z') & \defin    \begin{pmatrix} \pa_2 F_{13} - \pa_1 F_{23}   \\  \pa_1 (\pa_1 F_{11} + \pa_2 F_{12})  + \pa_2 (\pa_1 F_{21} + \pa_2 F_{22}) - (\pa_1^2 + \pa^2_2)F_{33} \end{pmatrix}, \\
S^2(z') & \defin \begin{pmatrix} 0 \\\pa_1 F_{13} + \pa_2 F_{23} \end{pmatrix}. 
\end{aligned} \ee
Roughly, the idea is that 
$$ V(\cdot,z) :=  \int_0^{+\infty} \left( G(D,z-z') S^0(z') + \pa_{z}G(D,z-z') S^1(z') + \pa^2_{z}G(D,z-z') S^2(z') \right) \, dz' $$
has  a better decay. Using the fact that $\pa_z G(D,z-z') = -\pa_z' G(D,z-z')$, we see that going from $V_G$ to $V$ is possible through integrations by parts in variable $z'$, which generates boundary terms. We recall that the jump of $G(D,z-z')$ at $z=z'$ is zero, and that 
$$
\left[\pa_z G(D,z-z')\right]\vert_{z=z'}=\begin{pmatrix}
0&0\\1&0
\end{pmatrix}.
$$
On the other hand, the first component of $S^2$ is zero, so that the jump of $\pa_z G_{21}$ at $z = z'$ is not involved in the two integrations by part of $\pa^2_{z}G(D,z-z') S^2(z')$.
Formal computations eventually lead to
\begin{align*} 
V_h(\cdot,z) & \defin V(\cdot,z) - V_G(\cdot,z) \\
 & \hspace{0.1cm}  = \: - \Bigl[ G(D,z-z') (S^1(\cdot,z') + \pa_z S^2(\cdot,z') \Bigr]_0^{+\infty}  \: + \:  \Bigl[ \pa_z G(D,z-z') S^2(\cdot,z') \Bigr]_0^{+\infty} \\
 &  \hspace{0.1cm} =  \:  G(D,z) (S^1(\cdot,0) + \pa_{z'} S^2(\cdot,0) )  \: - \:   \pa_z G(D,z)  S^2(\cdot,0).
 \end{align*}
 Back to the expression of the Green function, we get
\begin{equation} \label{Vh}
\begin{aligned}
V_h(\cdot,z) & =   -\: \begin{pmatrix} \sum_i A_i e^{-\lambda_i z} V_i^- & \sum_i B_i e^{-\lambda_i z} V_i^- \end{pmatrix} (S^1(\cdot,0)  + \pa_{z'} S^2(\cdot,0) ) \\ 
&  + \:  \begin{pmatrix} \sum_i A_i \lambda_i e^{-\lambda_i z} V_i^- & \sum_i B_i \lambda_i e^{-\lambda_i z} V_i^- \end{pmatrix} S^2(\cdot,0). 
\end{aligned}
\end{equation}
It is  a linear  combination  of terms of the form   $e^{-\lambda_i z} V_i^-$, and therefore satisfies the homogeneous Orr-Sommerfeld equations.  Hence, $V$ is (still formally) a solution of \eqref{OS1}-\eqref{OS2}. 

\medskip
We now need to put these  formal arguments  on rigorous grounds. As mentioned after Theorem \ref{thm_SCsource}, there is no loss of generality assuming that $F$ is smooth and compactly supported.  We first claim 
\begin{lemma} \label{rigorous}
Let  $F$ smooth and compactly supported. The formula \eqref{integralV}, with $V_h$ given by \eqref{Vh}, defines a solution 
$V = (v_3,\omega)^t$ of \eqref{OS1}-\eqref{OS2} satisfying
$$ V \in L^\infty_{loc}(\R_+, H^m(\R^2)), \quad |D|^{-1} \omega \in L^\infty_{loc}(\R_+, H^m(\R^2)) \quad \mbox{for any $m$.}$$  
\end{lemma}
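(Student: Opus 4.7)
I would work entirely in the partial Fourier variable $\xi$ dual to $(y_1,y_2)$, in which $\hat{V}_G(\xi,z) = \int_0^{+\infty} G(\xi,z-z')\, \hat{S}(\xi,z')\,dz'$ and $\hat{V}_h$ is an explicit linear combination of $e^{-\lambda_i z} V_i^-$. Since $F$ is assumed smooth and compactly supported, $\hat{F}(\xi,z')$ is smooth in $\xi$, Schwartz in $\xi$ uniformly in $z' \in \Supp F$, and compactly supported in $z'$. Because $S^0,S^1,S^2$ (and hence $S$) are built from horizontal derivatives $\pa_1,\pa_2$ of $F$, every component of $\hat{S}(\xi,z')$ carries an explicit factor of $i\xi_1$ or $i\xi_2$; in particular $\hat{S}_1(\xi,z')$ and $\hat{S}_2(\xi,z')$ are $O(|\xi|)$ as $\xi \to 0$, uniformly in $z'$. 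This cancellation is the key input needed to tame the low-frequency behavior of $G$.

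With this observation, I would first show that the integral defining $V_G$ converges in Fourier and defines, for each $z \ge 0$, a smooth function of $\xi$ lying in any weighted $L^2$ in $\xi$. For the high-frequency regime, Lemma \ref{lem:G} provides the polynomial bound $G(\xi,z) = O(|\xi|^N)$, which is crushed by the Schwartz decay of $\hat{S}(\xi,\cdot)$. For the low-frequency regime, the only singularity comes from $G_{12}(\xi,z) = O(|\xi|^{-1})$; the factor $\hat{S}_2 = O(|\xi|)$ cancels it exactly, so the integrand is smooth at $\xi=0$. The remaining entries of $G$ are $O(1)$ at the origin (Lemma \ref{lem:G}), causing no issue. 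The same analysis applies to $\hat{V}_h$, whose coefficients $G(\xi,0)(S^1+\pa_{z'}S^2)|_0$ and $\pa_z G(\xi,0)\, S^2|_0$ involve the same $O(|\xi|^{-1})$ entry multiplied by the same $O(|\xi|)$ cancellation from $\hat{S}^j_2$.

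Next I would verify that $V$ satisfies \eqref{OS1}-\eqref{OS2}. Differentiating $\hat{V}_G$ under the integral in $z$, the jumps of $G(\xi,z-z')$ and its $z$-derivatives at $z=z'$ are exactly those dictated by $L(\xi,\pa_z) G(\xi,\cdot) = \delta_0 \, I$; applying $L(\xi,\pa_z)$ produces the desired right-hand side $\hat{S}(\xi,z)$ for $z>0$. Since $V_h$ is a linear combination of the homogeneous modes $e^{-\lambda_i z}V_i^-$, each satisfying $L(\xi,-\lambda_i)V_i^- = 0$, we have $L(\xi,\pa_z) \hat{V}_h = 0$, so $V=V_G+V_h$ solves \eqref{OS1}-\eqref{OS2}. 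For the regularity statement $V \in L^\infty_{loc}(\R_+, H^m(\R^2))$, I would combine the $z$-continuity of $G(\xi,z-z')$, the compact $z'$-support of $\hat{S}$, and the Schwartz bounds of $\hat{F}$ to get, for any fixed $K\Subset \R_+$, a uniform-in-$z\in K$ estimate of $\|\langle\xi\rangle^m \hat{V}(\xi,z)\|_{L^2_\xi}$. Finally, for the statement $|D|^{-1}\omega \in L^\infty_{loc}(\R_+, H^m)$, I observe that $\hat{\omega}$ is given by the second row of $G$ against $\hat{S}$ plus the second component of $\hat{V}_h$; since both $G_{21}$ and $G_{22}$ are $O(1)$ at low frequencies while $\hat{S}_1,\hat{S}_2 = O(|\xi|)$, one gets $\hat{\omega}(\xi,z) = O(|\xi|)$, and dividing by $|\xi|$ yields a function bounded near $\xi=0$, hence locally $L^2$ in 2D.

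\paragraph{Main obstacle.} The delicate point is the low-frequency analysis: the Green function $G$ is singular through its $G_{12}$ entry, and in principle $V_G$ alone need not even belong to $L^2$ in $\xi$. The whole point of subtracting the explicit $V_h$ in \eqref{Vh} (equivalently, of rewriting $S$ as $S^0 + \pa_{z'}S^1 + \pa_{z'}^2 S^2$ and integrating by parts) is that it exhibits $V$ as the quantity naturally produced when all jump terms have been accounted for; what actually controls the integrability is the $O(|\xi|)$ cancellation in $\hat{S}_j$ coming from horizontal derivatives in $f=\div F$. Keeping careful track of this cancellation, uniformly in $z\in K\Subset \R_+$ and for arbitrary regularity order $m$, is where the work lies; the high-frequency regime is routine thanks to the Schwartz decay of $\hat{F}$ and the asymptotics $\lambda_i(\xi) \sim |\xi|$ of Lemma \ref{asymptote_lambda}.
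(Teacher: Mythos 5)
Your proposal is correct and follows essentially the same route as the paper's proof: a Fourier-side analysis of $J_{z,z'}(\xi)=G(\xi,z-z')\hat S(\xi,z')$ splitting high frequencies (where the polynomial growth $O(|\xi|^N)$ of $G$ from Lemma \ref{lem:G} is absorbed by the Schwartz decay of $\hat F$) from low frequencies (where the $O(|\xi|)$ factor carried by $\hat S$, coming from the horizontal derivatives in $\div F$, compensates the $O(|\xi|^{-1})$ singularity of $G_{12}$), together with the same treatment of $V_h$ and the standard jump-condition verification of \eqref{OS1}-\eqref{OS2}. The only nitpick is that after the cancellation the integrand is merely bounded near $\xi=0$ rather than smooth (factors like $|\xi|$ remain non-smooth at the origin), but boundedness is all that is needed for the weighted $L^2$ membership claimed.
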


\medskip
\begin{proof}

Let us show first show that the integral term $V_G$ (see \eqref{VG}) satisfies the properties of the lemma. The main point is to show  that for any $z,z' \ge  0$,  the function 
$$J_{z,z'} : \xi  \rightarrow G(\xi,z-z') \hat{S}(\xi,z') \quad  \mbox{belongs to} \quad
 L^2((1+|\xi|^2)^{m/2} d\xi)\times  L^2(|\xi|^{-1}(1+|\xi|^2)^{m/2} d\xi)$$
for all $m$. Therefore, we recall  that $\hat{F} = \hat{F}(\xi,z')$ is in the Schwartz class with respect to $\xi$,  smooth and compactly supported in $z'$. Also, $G(\xi,z-z')$ is smooth in $\xi \neq 0$ (see Remark \ref{remark_roots}), and continuous in $z,z'$.  It implies that $J_{z,z'}$ is smooth in $\xi \neq 0$, continuous in $z,z'$. It remains to check its behaviour  at high and low frequencies. 
\begin{itemize}
\item At high frequencies ($|\xi| \gg 1$), from Lemma \ref{lem:G}, it is easily seen that $J_{z,z'}$ is bounded by 
$$ |J_{z,z'}(\xi)| \le C |\xi|^N \sum_{k=0}^2 |\pa^k_{z'} \hat{F}(\xi,z')| $$
for some $N$. As $\hat{F}$ and its $z'$-derivatives are rapidly decreasing in $\xi$, it will belong to any $L^2$ with polynomial weight. 
\item At low frequencies ($\xi \sim 0$),  one can check that  
$|\hat{S}(\xi,z')| \le C |\xi|$.   Hence, using again the bounds derived in Lemma \ref{lem:G}, $G(\xi,z-z') \hat{S}(\xi,z') = \begin{pmatrix} O(1) \\O(|\xi|)  \end{pmatrix}$. The result follows.
\end{itemize}  
From there, by standard arguments, $V_G$ defines a continuous function of $z$ with values in $H^m(\R^2) \times |D|^{-1} H^m(\R^2)$ for all $m$. 
Moreover, a change of variable gives
$$ V_G(\cdot,z) = \int_0^{+\infty} G(D,z') S(\cdot,z-z') dz'.   $$
By the smoothness of $S$, we deduce  that $V_G$ is smooth in $z$ with values in the same space. The fact that it satisfies \eqref{OS1}-\eqref{OS2} comes of course from the properties of the Green function $G$, and is classical. We leave it to the reader.

\medskip
To conclude the proof of the lemma, we still have to consider the homogeneous correction $V_h$.  Again,  $V_h$ is smooth in $\xi \neq 0$ and $z$. Thanks to the properties of $F$, it is  decaying fast  as $|\xi|$ goes to infinity. Moreover, from the asymptotics above, one can check that $V_h = \left( \begin{smallmatrix}  O(1) \\O(|\xi|) \end{smallmatrix} \right)$ for $|\xi|\ll 1$. Finally, as its Fourier transform is a linear combination of 
$e^{-\lambda_i (\xi)z} V^-_i(\xi)$, it satisfies  \eqref{OS1}-\eqref{OS2} without source.  This ends the proof. 
\end{proof} 
\medskip
Let us stress that, with the same kind of arguments, one can  justify the integration by parts mentioned above, and write 
\begin{equation} \label{integralVbis}
V(\cdot,z) :=  \int_0^{+\infty} \sum_{k=0}^2 \pa_z^k G(D,z-z') S^k(z')  \, dz' 
\end{equation}
We will now try to derive the estimate \eqref{estimate_source}, starting from this formulation.

\subsubsection{Main estimate}
By Lemma \ref{rigorous}, we know that formula \eqref{integralV} (or equivalently \eqref{integralVbis}) defines a solution $V$ of \eqref{OS1}-\eqref{OS2}. Our main goal in this paragraph is to establish that $V$ obeys  inequality \eqref{estimate_source}. Our main ingredient will be 
\begin{lemma} \label{lemmadecay}
Let $\chi = \chi(\xi) \in C^\infty_c(\R^2)$, and $P = P(\xi) \in C^\infty(\R^2\setminus\{0\})$ defined by
$$ P(\xi) = p_k (\xi)|\xi|^{\alpha - k} Q(\xi) $$
near $\xi = 0$,  with $p_k$  a  homogeneous polynomial in $\xi_1, \xi_2$ of  degree $k$, $\alpha> 0$, and $Q \in \mathcal C^\infty(\R^2)$. Assume furthermore that $\alpha-k\ge -2$.
For $u_0 \in L^1_{uloc}(\R^2)$,we define $u^i = u^i(y_1,y_2,z)$  by  
\begin{equation} \label{def_ui_low}
 u^i(\cdot,z)  \defin \chi(D) P(D) e^{-\lambda_i(D) z} u_0. 
 \end{equation}
Then, there exists  $C$ and $\delta > 0$ independent of $u_0$ such that 
$$   \| e^{\delta z}  u^2 \|_{L^\infty(\R^3_+)} +  \| e^{\delta z}  u^3 \|_{L^\infty(\R^3_+)}  \| \: \le \: C \|�u_0 \|_{L^1_{uloc}}.$$ 
Moreover, there exists  $C$ and $\delta > 0$ independent of $u_0$ such that 
$$\| (1+z)^{\frac{\alpha}{3}} u^1 \|_{L^\infty(\R^3_+)}  \| \: \le \: C \|u_0 \|_{L^1_{uloc}}.$$ 
\end{lemma}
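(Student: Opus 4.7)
My plan is to write each $u^i$ as the convolution $u^i(\cdot,z) = K_i(\cdot,z) *_{(y_1,y_2)} u_0$, where the kernel $K_i(\cdot,z) \defin \mathcal F^{-1}_{\xi\to y}[\chi(\xi) P(\xi) e^{-\lambda_i(\xi)z}]$, and then to reduce the $L^\infty$-estimate to controlling the uniform-local-$L^1$ norm of $K_i$, namely
\[
\mathcal N_i(z) \defin \sup_{y\in\R^2}\sum_{k\in\Z^2}\sup_{y'\in B(y+k,1)} |K_i(y',z)|,
\]
which bounds $\|u^i(\cdot,z)\|_{L^\infty}$ by $\mathcal N_i(z)\,\|u_0\|_{L^1_{uloc}}$. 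The convergence of the lattice sum requires $K_i(\cdot,z)$ to decay in $y$ faster than $|y|^{-2}$, which I will obtain by integrating by parts on the Fourier side after estimating the $\xi$-derivatives of the symbol.

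For $i=2,3$, Lemma \ref{asymptote_lambda} gives $\lambda_i(0)= e^{\mp i\pi/4}$, hence $\mathrm{Re}\,\lambda_i(\xi)\ge 2\delta >0$ on the compact set $\mathrm{supp}\,\chi$. Each $\xi$-derivative of $e^{-\lambda_i z}$ produces at most polynomial-in-$z$ factors that are absorbed into $e^{-\delta z}$, while derivatives of $\chi P$ are bounded by $|\xi|^{\alpha-|\beta|}$ near the origin. Integrating by parts $2N$ times in $\xi$ for some $N$ with $1<N<(\alpha+2)/2$ (possible because $\alpha>0$) yields $(1+|y|)^{2N}|K_i(y,z)|\le Ce^{-\delta z}$; since $2N>2$, the lattice sum then converges to $\mathcal N_i(z)\le Ce^{-\delta z}$.

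The case $i=1$ is the real difficulty, because $\mathrm{Re}\,\lambda_1(\xi)\sim|\xi|^3$ vanishes at the origin. I will split $\chi=\chi_\infty+\chi_0$: the part $\chi_\infty$ supported away from $0$ is handled as above (yielding exponential decay), while $\chi_0\in C^\infty_c(B(0,R_0))$ with $R_0$ chosen small enough that $\mathrm{Re}\,\Lambda_1\ge 1/2$ on $B(0,R_0)$. For this low-frequency piece I will perform the parabolic-type scaling $\eta=z^{1/3}\xi$; using $\lambda_1(\xi)z = |\eta|^3\Lambda_1(\eta z^{-1/3})$ and the degree-$\alpha$ homogeneity of $p_k(\xi)|\xi|^{\alpha-k}$, one obtains
\[
K_1^0(y,z) = z^{-(2+\alpha)/3}\,\tilde K(z^{-1/3}y,z),
\]
where $\tilde K$ is the Fourier integral of $\chi_0(\eta z^{-1/3})p_k(\eta)|\eta|^{\alpha-k}Q(\eta z^{-1/3})e^{-|\eta|^3\Lambda_1(\eta z^{-1/3})}$. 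This rescaled integrand is pointwise dominated by $C|\eta|^\alpha e^{-|\eta|^3/2}$ uniformly in $z\ge 1$, and its $\eta$-derivatives stay under control since those falling on the rescaled cutoffs or on $\Lambda_1(\eta z^{-1/3}), Q(\eta z^{-1/3})$ generate harmless factors of $z^{-|\beta|/3}\le 1$. The same integration by parts $2N$ times then gives $|\tilde K(\tilde y,z)|\le C(1+|\tilde y|)^{-2N}$, so that
\[
\mathcal N_1^0(z) \le C\, z^{-(2+\alpha)/3}\int (1+|y'|z^{-1/3})^{-2N}\,dy' \le C\, z^{-\alpha/3},\qquad z\ge 1.
\]
For $z\le 1$, a direct integration by parts on the unrescaled symbol yields $\mathcal N_1^0(z)\le C$. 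Combining both ranges produces $\mathcal N_1(z)\le C(1+z)^{-\alpha/3}$, which is the announced bound.

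The core obstacle is the low-frequency regime of $\lambda_1$: the precise cubic vanishing $\lambda_1(\xi)\sim|\xi|^3$ is what selects both the scaling $z^{1/3}$ and the polynomial rate $\alpha/3$, in contrast with the exponential decay available for $\lambda_2,\lambda_3$. The hypothesis $\alpha>0$ enters as the precise threshold needed to perform integration by parts enough times for the lattice sum to converge in two dimensions; without it one would lose a logarithmic (or divergent) factor, and the uniform-local-$L^\infty$ bound would not close.
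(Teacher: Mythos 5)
Your overall architecture coincides with the paper's: represent $u^i$ as a convolution with the kernel $K^i(\cdot,z)=\mathcal F^{-1}\bigl(\chi P e^{-\lambda_i z}\bigr)$, reduce the $L^\infty$ bound against $L^1_{uloc}$ data to pointwise decay of $K^i$ strictly faster than $|y|^{-2}$ (the paper's partition of unity $(\varphi_q)_{q\in\Z^2}$ is equivalent to your lattice-sum norm $\mathcal N_i$), use $\Re \lambda_{2,3}\ge \delta$ on $\Supp \chi$ for the exponential decay, and for $i=1$ perform the self-similar change of variables $\xi'=z^{1/3}\xi$, which indeed produces the prefactor $z^{-(2+\alpha)/3}$ and, after integrating the rescaled kernel, the rate $z^{-\alpha/3}$.

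There is, however, a genuine gap in how you establish the kernel decay. You integrate by parts ``$2N$ times'' with $1<N<(\alpha+2)/2$ and assert this is possible because $\alpha>0$; but the number of integrations by parts must be an integer $m$, and you need simultaneously $m>2$ (so that the lattice sum converges) and $m<\alpha+2$ (so that $\pa^m P\sim|\xi|^{\alpha-m}$ stays integrable near $\xi=0$, since $P=p_k|\xi|^{\alpha-k}Q$ is in general singular there). No such integer exists when $0<\alpha\le 1$, a range the paper actually uses (for instance the $(1+z)^{-1/3}$ decays, and the symbols of the form $\xi_j p_1(\xi)|\xi|^{-1}$ arising from $|D|^{-2}D$ acting on the $C_1$-terms, have $\alpha=1$ with $\alpha-k=-1$). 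More generally, integer-order integration by parts can never yield the fractional decay $(1+|y|)^{-(2+\alpha)}$ that the sharp estimate requires. This is exactly the role of Lemma \ref{lem:op-integral} in the paper: after reducing to $P=\xi_1^a\xi_2^b|\xi|^\beta$ with $\beta\in[-2,0[$, it uses the explicit homogeneous inverse Fourier transform $\mathcal F^{-1}(|\xi|^\beta)=C|x_h|^{-\beta-2}$ together with a convolution estimate employing a cutoff at scale $|x_h|$; in the rescaled $i=1$ analysis the paper likewise isolates a neighbourhood of $\xi'=0$ with a cutoff $\varphi$ and applies that lemma there, reserving integration by parts for the non-singular piece $K^1_2$. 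Your argument needs this ingredient (or a dyadic decomposition near $\xi=0$ playing the same role) to close.
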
 
\begin{remark}
Showing that the definition \eqref{def_ui_low} makes sense is part of the proof of the lemma. Namely, it is shown  that for any $z > 0$,  the kernel 
$$ K(x_1,x_2,z) \: :=  \: {\cal F}^{-1}_{\xi \rightarrow (x_1,x_2)} \left( \chi(\xi) P(\xi) e^{-\lambda_i(\xi) z} \right)$$ 
defines an element of $L^1(\R^2)$. In particular,  \eqref{def_ui_low} is appropriate:  
$u^i = K(\cdot, z) \star  u_0$ defines (at least) an $L^1_{uloc}$ function as the convolution of  functions of $L^1$ and  $L^1_{uloc}$. 
\end{remark}
We refer to the appendix for a proof. Lemma \ref{lemmadecay} is the source of  the asymptotic behaviour of the solution $v$ of  \eqref{BL}. As always in this type of boundary layer problems, the asymptotic behaviour is given by low frequencies, corresponding to the cut-off $\chi$. In particular, the decay is given by the characteristic root $\lambda_1(\xi)$, which vanishes at $\xi = 0$.

\medskip
{\em Proof of estimate \ref{estimate_source}.} 
We distinguish between low and high frequencies.

 \medskip
 \noindent
 {\em Low frequencies.}
We introduce some $\chi = \chi(\xi) \in C^\infty_c(\R^2)$ equal to $1$ near $\xi=0$. 
We consider 
 \begin{equation} \label{def_vbemol}
  V^{\flat} = \int_{\R^+}  \sum_{k=0}^2 I^k(\cdot,z,z') dz', \quad   I^k(\cdot,z,z')  \defin  \chi(D)  \pa^k_{z} G(D,z-z') S^k(\cdot,z').
 \end{equation} 
 In what follows,  we write  $S^k  = (s^k_3, s^k_\omega )^t$  and $I^k = (I^k_3, I^k_\omega)^t$. 
 \begin{itemize}
\item  Study of $I^0$.
\end{itemize}
We find 
\begin{align*}
 I^0_3(\cdot,z,z')   \:  =& \:  \sgn(z-z')  \chi(D)   \sum A_i(D) e^{-\lambda_i(D)|z -z'|}   s^0_3(\cdot,z')  
 \\
 & + \: \chi(D)  \sum B_i(D) e^{-\lambda_i(D)|z -z'|} s^0_\om(\cdot,z'),  \\
 I^0_\omega(\cdot,z,z')  \:  =& \:  -\chi(D)   \sum A_i(D) \Omega_i(D) e^{-\lambda_i(D)|z -z'|}   s^0_3(\cdot,z')  \\
&   -\sgn(z-z') \:  \chi(D) \sum B_i(D) \Omega_i(D) e^{-\lambda_i(D)|z -z'|} s^0_\om(\cdot,z').  
 \end{align*}
We also have 
 \begin{align*}
  \pa_z  I^0_3(\cdot,z,z')   \: = \:  &  -\: \chi(D)   \sum A_i(D) \lambda_i(D) e^{-\lambda_i(D)|z -z'|}   s^0_3(\cdot,z')  \\
& -\sgn(z-z')   \: \chi(D)  \sum B_i(D) \lambda_i(D) e^{-\lambda_i(D)|z -z'|} s^0_\om(\cdot,z'). 
\end{align*}
 We note that $\widehat{s^0_3}(\xi,z')$, resp. $\widehat{s^0_\om}(\xi,z')$ is a product of components of $\hat{F}(\xi,z')$ by  homogeneous polynomials of  degree 2, resp. degree 3 in $\xi$.  Using the asymptotic behaviours derived in Lemma \ref{lem:G} together with Lemma \ref{lemmadecay},  we deduce
 \begin{equation} \label{I0}
 \begin{aligned}
 \|  I^0_3(\cdot, z,z') \|_{L^\infty(\R^2)} & \le  \frac{C}{(1+|z-z'|)^{2/3}} \, \| F(\cdot, z') \|_{L^1_{uloc}(\R^2)}, \\   
 \|  I^0_\omega(\cdot, z,z') \|_{L^\infty(\R^2)} & \le  \frac{C}{(1+|z-z'|)}  \| F(\cdot, z') \|_{L^1_{uloc}(\R^2)}, \\  
\left\|  \frac{D}{|D|^2} I^0_\omega(\cdot,z,z') \right\|_{L^\infty(\R^2)}   & \le  \frac{C}{(1+|z-z'|^{2/3})}  \| F(\cdot, z') \|_{L^1_{uloc}(\R^2)} \\ 
 \left\|  \frac{D}{|D|^2}  \pa_z I^0_3(\cdot,z,z') \right\|_{L^\infty(\R^2)} & \le  \frac{C}{(1+|z-z'|^{4/3})}  \| F(\cdot, z') \|_{L^1_{uloc}(\R^2)}
\end{aligned}
\end{equation}
 The last two bounds will be useful when estimating the horizontal velocity components through \eqref{formulav1v2}. We insist that $\pa_z I^0_3$ has a better behaviour than $I^0_3$, because there is an extra factor $\lambda_1(D)$ in front of $A_1$ and $B_1$, which  gives a higher degree of homogeneity at low frequencies for the term in $\exp(-\lambda_1(D) z)$. This is why we can apply $D/|D|^2$ to that term. As for the terms in $\exp(-\lambda_i(D)z)$ for $i=2,3$, there is no singularity near $\xi=0$ when we apply $D/|D|^2$ because of the homogeneity of degree 2 - resp. 3 - in $\widehat{s^0_3}(\xi,z')$ - resp. $\widehat{s^0_\om}(\xi,z')$. 

\begin{itemize}
\item Study of $I^1$.
\end{itemize}
 We find 
  \begin{align*}
 I^1_3(\cdot,z,z')   \: = \: & - \chi(D)   \sum A_i(D) \lambda_i(D) e^{-\lambda_i(D)|z -z'|}   s^1_3(\cdot,z')  
 \\
   & -\sgn(z-z') \: \chi(D)  \sum B_i(D) \lambda_i(D) e^{-\lambda_i(D)|z -z'|} s^1_\om(\cdot,z'),  \\
 I^1_\omega(\cdot,z,z')  \: = \: &  \sgn(z-z')  \chi(D)   \sum A_i(D) \lambda_i(D) \Omega_i(D) e^{-\lambda_i(D)|z -z'|}   s^1_3(\cdot,z')  \\
&  + \: \chi(D)  \sum B_i(D) \lambda_i(D)  \Omega_i(D) e^{-\lambda_i(D)|z -z'|} s^1_\om(\cdot,z'), 
 \end{align*}
and also 
\begin{align*}
\pa_z I^1_3(\cdot,z,z')   \: = \: & \sgn(z-z')  \chi(D)   \sum A_i(D) (\lambda_i(D))^2 e^{-\lambda_i(D)|z -z'|}   s^1_3(\cdot,z')  
 \\
&+\: \chi(D)  \sum B_i(D) (\lambda_i(D))^2 e^{-\lambda_i(D)|z -z'|} s^1_\om(\cdot,z').  \\
\end{align*}
 Thanks to the derivation of the Green function with respect to $z$, an extra factor $\lambda_1(D)$ appears together with $A_1(D)$ or $B_1(D)$. 
 This provides a higher degree of homogeneity in $|\xi|$ at low frequencies. It compensates for the loss of homogeneity of $S^1$ compared to $S^0$. More precisely, we note that $\widehat{s^1_3}(\xi,z')$, resp. $\widehat{s^1_\om}(\xi,z')$ is a product of components of $\hat{F}(\xi,z')$ by  homogeneous polynomials of  degree 1, resp. degree 2 in $\xi$. 
 We also get 
 \begin{equation} \label{I1}
 \begin{aligned}
\|  I^1_3(\cdot, z,z') \|_{L^\infty(\R^2)} & \le  \frac{C}{(1+|z-z'|)^{4/3}} \, \| F(\cdot, z') \|_{L^1_{uloc}(\R^2)}, \\   
\|  I^1_\omega(\cdot, z,z') \|_{L^\infty(\R^2)} & \le  \frac{C}{(1+|z-z'|^{5/3})}  \| F(\cdot, z') \|_{L^1_{uloc}(\R^2)}, \\
\left\|  \frac{D}{|D|^2} I^1_\omega(\cdot,z,z') \right\|_{L^\infty(\R^2)} & \le  \frac{C}{(1+|z-z'|^{4/3})}  \| F(\cdot, z') \|_{L^1_{uloc}(\R^2)}, \\
\left\|  \frac{D}{|D|^2}  \pa_z I^1_3(\cdot,z,z') \right\|_{L^\infty(\R^2)} & \le  \frac{C}{(1+|z-z'|^2)}  \| F(\cdot, z') \|_{L^1_{uloc}(\R^2)}.
\end{aligned}
\end{equation}
 \begin{itemize}
 \item Study of $I^2$.
 \end{itemize}
 We find 
\begin{align*}
 I^2_3(\cdot,z,z')   \: = \:  & \sgn(z-z')  \chi(D)   \sum A_i(D) (\lambda_i(D))^2 e^{-\lambda_i(D)|z -z'|}   s^2_3(\cdot,z')  
 \\
   & +  \: \chi(D)  \sum B_i(D) (\lambda_i(D))^2 e^{-\lambda_i(D)|z -z'|} s^2_\om(\cdot,z'),  
\end{align*}
as well as 
\begin{align*}
 I^2_\omega(\cdot,z,z')  \:  = \: &-  \chi(D)   \sum A_i(D) (\lambda_i(D))^2\Omega_i(D) e^{-\lambda_i(D)|z -z'|}   s^2_3(\cdot,z')  \\
& -\sgn(z-z') \: \chi(D)  \sum B_i(D) (\lambda_i(D))^2  \Omega_i(D) e^{-\lambda_i(D)|z -z'|} s^2_\om(\cdot,z'),
 \end{align*}
and 
\begin{align*}
 \pa_z I^2_3(\cdot,z,z')   \: = \:  & - \chi(D)   \sum A_i(D) (\lambda_i(D))^3 e^{-\lambda_i(D)|z -z'|}   s^2_3(\cdot,z')  
 \\
   & -\sgn(z-z')   \: \chi(D)  \sum B_i(D) (\lambda_i(D))^3 e^{-\lambda_i(D)|z -z'|} s^2_\om(\cdot,z').
\end{align*}

This time, $s^2_3 = 0$ and $\widehat{s^2_\om}$ is homogeneous of degree 1.  We get as before:
 \begin{equation} \label{I2}
 \begin{aligned}
\|  I^2_3(\cdot, z,z') \|_{L^\infty(\R^2)} & \le  \frac{C}{(1+|z-z'|)^{2}} \, \| F(\cdot, z') \|_{L^1_{uloc}(\R^2)}, \\   
\|  I^2_\omega(\cdot, z,z') \|_{L^\infty(\R^2)} & \le  \frac{C}{(1+|z-z'|)^{7/3}}  \| F(\cdot, z') \|_{L^1_{uloc}(\R^2)}, \\
\left\|  \frac{D}{|D|^2} I^2_\omega(\cdot,z,z') \right\|_{L^\infty(\R^2)} & \le  \frac{C}{(1+|z-z'|)^{2}}  \| F(\cdot, z') \|_{L^1_{uloc}(\R^2)}, \\
\left\|  \frac{D}{|D|^2} \pa_z I^2_3(\cdot,z,z') \right\|_{L^\infty(\R^2)} & \le  \frac{C}{(1+|z-z'|)^{8/3}}  \| F(\cdot, z') \|_{L^1_{uloc}(\R^2)}.
\end{aligned}
\end{equation}
Combining \eqref{I0}-\eqref{I1}-\eqref{I2}, we find 
\begin{align*}
\|v_3^\flat(\cdot,z) \|_{L^\infty(\R^2)} \:  & \le \: C \int_0^{+\infty} \frac{1}{(1+|z-z'|)^{2/3}} \frac{1}{(1+z')^{2/3}}  \ dz' \: \| (1+z^{2/3}) F\|_{L^\infty(L^1_{uloc}(\R^2))}
\\
\|�\omega^\flat(\cdot,z) \|_{L^\infty(\R^2)} \:  & \le \: C \int_0^{+\infty} \frac{1}{(1+|z-z'|)} \frac{1}{(1+z')^{2/3}}  \ dz' \:  \| (1+z^{2/3}) F\|_{L^\infty(L^1_{uloc}(\R^2))}
\end{align*}
and 
\begin{align*}
\left\| \frac{D}{|D|^2} \omega^\flat(\cdot,z)  \right\|_{L^\infty(\R^2)} \:  & \le \: C \int_0^{+\infty} \frac{1}{(1+|z-z'|)^{2/3}} \frac{1}{(1+z')^{2/3}}  \ dz' \: \:   \| (1+z)^{2/3} F\|_{L^\infty(L^1_{uloc}(\R^2)} \\
\left\| \frac{D}{|D|^2} \pa_z v_3^\flat(\cdot,z)  \right\|_{L^\infty(\R^2)} \:  & \le \: C \int_0^{+\infty} \frac{1}{(1+|z-z'|)^{4/3}} \frac{1}{(1+z')^{2/3}}  \ dz' \:  \| (1+z)^{2/3} F\|_{L^\infty(L^1_{uloc}(\R^2)} .
\end{align*}
We deduce (see Lemma \ref{lem:integrales} in the Appendix)
\begin{equation} \label{estimate_bemol_1}
\begin{aligned}
\|v_3^\flat(\cdot,z) \|_{L^\infty(\R^2)} \:  & \le \: C (1+z)^{-1/3} \:   \| (1+z^{2/3}) F\|_{L^\infty(L^1_{uloc}(\R^2))} \\
\|\omega^\flat(\cdot,z) \|_{L^\infty(\R^2)}   \:  & \le \: C (1+z)^{-2/3} \ln(2+z)  \:  \| (1+z^{2/3}) F\|_{L^\infty(L^1_{uloc}(\R^2))} 
\end{aligned}
\end{equation}
and 
\begin{equation} \label{estimate_bemol_2}
\begin{aligned}
\left\| \frac{D}{|D|^2} \omega^\flat(\cdot,z)  \right\|_{L^\infty(\R^2)} \:  & \le \: C  (1+z)^{-1/3} \:   \| (1+z^{2/3}) F\|_{L^\infty(L^1_{uloc}(\R^2))} \\
\left\| \frac{D}{|D|^2} \pa_z v_3^\flat(\cdot,z)  \right\|_{L^\infty(\R^2)} \:  & \le \: C  (1+z)^{-2/3} \ln(2+z) \:   \| (1+z^{2/3}) F\|_{L^\infty(L^1_{uloc}(\R^2))}.
\end{aligned}
\end{equation}

\bigskip
\noindent{\em High frequencies.}
  To obtain the estimate  \eqref{estimate_source}, we still have to control the high frequencies:
 \begin{equation} \label{def_vdiese}
  V^\# = \int_{\R^+} \sum_{k=0}^2 J^k(\cdot,z,z') dz', \quad   J^k(\cdot,z,z')  \defin  (1-\chi(D))  \pa^k_{z} G(D,z-z') S^k(\cdot,z').
 \end{equation} 
Instead of Lemma \ref{lemmadecay}, we shall use (see appendix for a proof)
\begin{lemma} \label{lemmadecay2}
Let $\chi \in C^\infty_c(\R^2)$, with $\chi = 1$ in a ball $B_r:=B(0,r)$ for some $r>0$. Let $P = P(\xi) \in C^3_b(\R^2\setminus B_r)$.  For $u_0 = u_0(y_1,y_2) \in H^N_{uloc}(\R^2)$, $N \in \N$, we define $u^i = u^i(y_1,y_2,z)$  by  
\begin{equation} \label{def_ui_high}
 u^i(\cdot,z)  \defin (1-\chi(D)) P(D) e^{-\lambda_i(D) z} u_0. 
 \end{equation}
Then, for $N$ large enough and $\delta > 0$ small enough: 
$$  \| e^{\delta z}  u^1 \|_{L^\infty(\R^3_+)} +  \| e^{\delta z}  u^2 \|_{L^\infty(\R^3_+)} +  \| e^{\delta z}  u^3 \|_{L^\infty(\R^3_+)}  \| \: \le \: C \|�u_0 \|_{H^N_{uloc}(\R^2)}.$$ 
\end{lemma}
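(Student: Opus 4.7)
The proof proceeds by reducing the $L^\infty$ estimate to a kernel $L^1$ bound after trading $N$ derivatives from $u_0$ against $\langle \xi \rangle^{-N}$ decay of the symbol. The main structural observation is that on the support of $1-\chi$ the characteristic root $\lambda_i$ satisfies $\operatorname{Re} \lambda_i(\xi) \ge c(1+|\xi|)$: this is immediate from the continuity of $\lambda_i$, the positivity of $\operatorname{Re}\lambda_i$ on $\{|\xi|\ge r\}$ (Remark \ref{remark_roots}, together with $\lambda_1 > 0$ there by Lemma \ref{asymptote_lambda}) and the linear growth $\lambda_i(\xi) \sim |\xi|$ at infinity. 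I would then pick $\delta>0$ small enough that $\operatorname{Re}(\lambda_i(\xi)-\delta) \ge c\langle \xi \rangle/2$ still holds, factor out $e^{-\delta z}$, and reduce matters to showing that the Fourier multiplier $T(z)$ of symbol $\sigma(\xi,z) := (1-\chi(\xi)) P(\xi) e^{-(\lambda_i(\xi)-\delta) z}$ is bounded $H^N_{uloc}(\R^2) \to L^\infty(\R^2)$ uniformly in $z>0$.

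To handle the lack of decay in $\xi$ of $P$, the main step is to borrow derivatives from $u_0$. Taking $N$ a large even integer, the operator $\langle D \rangle^N = (1-\Delta)^{N/2}$ is a local differential operator, so $v_0 := \langle D \rangle^N u_0$ lies in $L^2_{uloc}(\R^2)$ with $\|v_0\|_{L^2_{uloc}} \le C \|u_0\|_{H^N_{uloc}}$. I would then study the modified symbol $\tilde \sigma(\xi,z) := \sigma(\xi,z)\langle \xi \rangle^{-N}$. Faà di Bruno estimates, combined with the symbol bounds $|\partial^\alpha \lambda_i(\xi)| \lesssim \langle \xi \rangle^{1-|\alpha|}$ on $\operatorname{supp}(1-\chi)$ and the elementary inequality $(z\langle \xi\rangle)^k e^{-c\langle\xi\rangle z}\le C_k$, give
\[
|\partial_\xi^\alpha e^{-(\lambda_i-\delta)z}| \le C_\alpha \langle \xi\rangle^{-|\alpha|} e^{-c\langle\xi\rangle z/2}, \qquad z>0,
\]
and, together with $P \in C^3_b$, this yields $|\partial^\alpha \tilde\sigma(\xi,z)| \le C_\alpha \langle \xi\rangle^{-N} e^{-c\langle\xi\rangle z/2}$ for $|\alpha|\le 3$. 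For $N>2$, integration in $\xi$ makes each $\|\partial^\alpha \tilde\sigma(\cdot,z)\|_{L^1_\xi}$ uniformly bounded in $z>0$; integrating by parts three times in $\xi$ then produces the kernel bound
\[
(1+|x|^3)\,|\tilde K(x,z)| \le C, \qquad \tilde K(\cdot,z):= \mathcal{F}^{-1}_\xi \tilde\sigma(\cdot,z),
\]
uniformly in $z>0$. Plancherel likewise gives $\|\tilde K(\cdot,z)\|_{L^2(\R^2)} \le C$.

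The conclusion follows from a standard partition-of-unity argument for the convolution $\tilde K(\cdot,z) \star v_0$. Writing $v_0 = \sum_{k\in\Z^2} \phi_k v_0$ with $\{\phi_k\}$ subordinate to the balls $B(k,1)$, I would split the evaluation at any $x_0$ into the indices $|k-x_0|\le 2$, bounded by Cauchy--Schwarz using $\tilde K \in L^2$, and the indices $|k-x_0|>2$, bounded by $\|\tilde K\|_{L^\infty(|x|\ge |k-x_0|/2)}\|\phi_k v_0\|_{L^1}$, with the residual series $\sum_{|k|>2}|k|^{-3}$ converging in $\R^2$. This yields $\|T(z) u_0\|_{L^\infty}\le C\|v_0\|_{L^2_{uloc}} \le C\|u_0\|_{H^N_{uloc}}$, and re-inserting the prefactor $e^{-\delta z}$ completes the proof. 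The hardest point is the uniformity as $z\to 0^+$: without the derivative-trading step, $\mathcal{F}^{-1}[(1-\chi)Pe^{-\lambda_i z}]$ has $L^1$ norm blowing up like $z^{-2}$ since $P$ itself has no decay, so one genuinely needs $N$ large to absorb this divergence through the extra $\langle \xi\rangle^{-N}$ factor.
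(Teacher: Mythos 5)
Your argument is correct and follows essentially the same route as the paper's proof: both trade $N$ derivatives of $u_0$ for a factor $\langle\xi\rangle^{-N}$ in the symbol (the paper writes this as applying the kernel of $(1+|\xi|^2)^{-n}(1-\chi)P e^{-\lambda_j z}$ to $(1-\Delta_h)^n u_0$), both extract the $e^{-\delta z}$ decay from $\operatorname{Re}\lambda_j\ge\delta$ on $\operatorname{supp}(1-\chi)$, and both obtain $(1+|x_h|)^{-3}$ kernel decay by three integrations by parts before concluding via convolution of an integrable, square-integrable kernel with an $L^2_{uloc}$ function. Your intermediate symbol estimates (the $e^{-c\langle\xi\rangle z/2}$ decay and the bound $|\partial^\alpha\lambda_i|\lesssim\langle\xi\rangle^{1-|\alpha|}$) are slightly sharper than what the paper uses, but this is a cosmetic refinement, not a different proof.
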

\begin{remark}
As in the proof of Lemma \ref{lemmadecay}, part of the proof of Lemma \ref{lemmadecay2} gives a meaning to \eqref{def_ui_high}. In particular, it is shown that for $n$ large enough, and any $z > 0$, the  kernel 
$$ K_{n}(x_1,x_2,z) \: := \: {\cal F}^{-1} \left((1+ |\xi|^2)^{-n}  (1-\chi(\xi)) P(\xi) e^{-\lambda_i(\xi)z}  \right) $$
belongs to $L^1(\R^2)$ so that $u^i = K_n \star ( (1-\Delta)^{n} u_0)$ defines at least an element of $L^2_{uloc}$, as the convolution of functions in $L^1$ and $L^2_{uloc}$ (assuming $N \ge 2n$).  
\end{remark}
The analysis is simpler than for low frequencies. From \eqref{def_vdiese}, \eqref{def:S0}-\eqref{def:S12} and Lemma \ref{lem:G}, we decompose the  components of $J^k$ for $k=0,1,2$ into terms of the form 
$$ (1-\chi(D)) R(D)  e^{-\lambda_i(D)|z-z'|}\pa_1^{a_1} \pa_2^{a_2} F_{jl},$$
  where $F_{jl}$  are components of our source term $F$, $a_1, a_2=0,1,2$ with $1\leq a_1+a_2\leq 3$, and  $R(D)$ is of the form 
$$ R(D) = {\cal R}(\lambda_1(D), \lambda_2(D), \lambda_3(D), D),  $$
for some rational expression ${\cal R} = {\cal R}(\lambda_1,\lambda_2,\lambda_3, \xi)$. Considering the behaviour of $\lambda_i(\xi)$ at infinity (see Lemma \ref{lemmadecay} and Remark \ref{remark_roots}), it can be easily seen that 
$ |\xi|^{-2n} R(\xi) \in C^3_b(\R^2\setminus B_r) $  for some $n$ large enough. 
Thus, we can apply Lemma \ref{lemmadecay2} with 
$$P(\xi) = |\xi|^{-2n} R(\xi), \quad u_0 = (\pa_1^2 + \pa^2_2)^{n} \pa_1^{a_1}\pa_2^{a_2}  F_{jl}(\cdot,z').$$
 This shows that for $m$  large enough ($m = N+2n+3$):  
\begin{equation} \label{Jk}
\| J^k(\cdot,z,z') \|_{L^\infty(\R^2)} \: \le \: C \, e^{-\delta |z-z'|} \|F(\cdot, z') \|_{H^m_{uloc}(\R^2)}.
\end{equation} 
Also, up to taking a larger $m$, one can check that  
\begin{equation} \label{Jkbis}
\| \pa_z J^k(\cdot,z,z') \|_{L^\infty(\R^2)} \: \le \: C \, e^{-\delta |z-z'|} \|F(\cdot, z') \|_{H^m_{uloc}(\R^2)}.
\end{equation} 
We deduce from \eqref{Jk}-\eqref{Jkbis} that for $m$ large enough
\begin{equation} \label{estimate_diese1}
\begin{aligned}
&\| V^\#(\cdot,z) \|_{L^\infty(\R^2) } + \|\pa_z V^\#(\cdot,z) \|_{L^\infty(\R^2)} \\& \le C \: \int_0^{+\infty} e^{-\delta|z-z'|}(1+z')^{-2/3} \, dz' \:  \|(1+z)^{2/3} F \|_{L^\infty(H^m_{uloc})} 
\\
& \le C (1+z)^{-2/3} \: \|(1+z)^{2/3}  F \|_{L^\infty(H^m_{uloc})}. 
\end{aligned}
\end{equation}
Together  with \eqref{estimate_bemol_1}, this inequality implies the estimate \eqref{estimate_source}. Together with \eqref{estimate_bemol_2}, it further yields 
\begin{equation} \label{estimate_source2}
 \left\| (1+z)^{1/3} \frac{D}{|D|^2} \pa_z v_3 \right\|_{L^\infty(\R^3_+)} + \left\| (1+z)^{1/3}\frac{D}{|D|^2} \omega \right\|_{L^\infty(\R^3_+)} \le C \|(1+z)^{2/3} F \|_{L^\infty(H^m_{uloc})}. 
 \end{equation}

\subsection{Proof of Theorem \ref{thm_SCsource}}
In the last section, we have constructed a particular solution  of \eqref{OS1}-\eqref{OS2} satisfying \eqref{estimate_source}, \eqref{estimate_source2}; in the rest of this paragraph, we denote this particular solution as $V^p=(v^p_3,\omega^p)^t$. The bound \eqref{estimate_source2} implies in particular that 
\begin{equation}
 \| (1+z)^{1/3} (v_1^p,v_2^p) \|_{L^\infty(\R^3_+)} \: \le \: C \, \|�(1+z)^{2/3} F \|_{L^\infty(H^m_{uloc})}  
 \end{equation}
 where  $v_1^p,v_2^p$ are recovered from $v_3^p,\omega^p$ through formula \eqref{formulav1v2}. 
 
 \medskip
We still need to make the connection with the solution of \eqref{SC_source}.  Following the discussion after Theorem \ref{thm_SCsource}, for smooth and compactly supported data, such a solution exists, and the point is to establish \eqref{theestimate}. We introduce 
$$ u \defin v - v^p, \quad w  = \omega - \omega^p$$
 Functions $u_3$ and $v$ satisfy the homogeneous version of the Orr-Sommerfeld equations: 
 \begin{equation} \label{OSH}
 \pa_3 u_3 + \Delta w = 0, \quad  -\pa_3 w+ \Delta^2 u_3  = 0. 
 \end{equation}
These equations are completed by the boundary conditions 
\be\ba\label{BC}
u_3\vert_{z=0} = v_{0,3} - v_3^p\vert_{z=0}, \quad \pa_z u_3\vert_{z=0} = -\pa_1 (v_{0,1} - v^p_1) -\pa_2 (v_{0,2} - v^p_2), \\
 w\vert_{z=0} = \pa_1 (v_{0,2} - v^p_2)  - \pa_2 (v_{0,1} - v^p_1). 
\ea\ee
 System \eqref{OSH}-\eqref{BC} is the formulation in terms of vertical velocity and vorticity of  a Stokes Coriolis system with zero  source term and  inhomogeneous Dirichlet data. Formal solutions read  
 \begin{equation} \label{formula}
\left( \begin{matrix} \hat{u}_3(\xi,z)  \\ \hat{w}(\xi,z)  \end{matrix} \right) = \sum_{i=1}^3 e^{-\lambda_i(\xi) z}  C_i(\xi)  V_i^-(\xi) 
 \end{equation}
 where coefficients $C_i$ obey  the system 
 \begin{equation}
 \begin{pmatrix} 
1 & 1  & 1 \\ 
\lambda_1 & \lambda_2 & \lambda_3 \\
\Omega_1 & \Omega_2  & \Omega_3 
\end{pmatrix} 
\begin{pmatrix} C_1 \\ C_2 \\ C_3  \end{pmatrix} =  \begin{pmatrix} \hat{u}_3\vert_{z=0} \\ -\pa_z \hat{u}_3\vert_{z=0}  \\-\hat w\vert_{z=0} \end{pmatrix} .
 \end{equation}
The determinant $D_3$ of this system reads 
$$ \label{D} 
D_3 \defin (\lambda_2-\lambda_1)(\Om_3-\Om_1) - (\lambda_3-\lambda_1) (\Om_2-\Om_1),
$$
so that $D_3\to \bar D_3\in \C^*$ as $\xi\to 0*$.

After  tedious computation, we find
\begin{equation} \label{C_i}
\begin{aligned} 
C_1 & = \frac{1}{D_3} \left(  (\lambda_2 \Omega_3 - \lambda_3 \Omega_2) \hat{u}_3\vert_{z=0} + (\Om_3 - \Om_2)\pa_z  \hat{u}_3\vert_{z=0}  
 +  (\lambda_2-\lambda_3)  \hat{w}\vert_{z=0} \right), \\
C_2 & = \frac{1}{D_3} \left((\lambda_3 \Om_1 - \lambda_1 \Om_3) \hat{u}_3\vert_{z=0}+ (\Omega_1  - \Omega_3) \pa_z  \hat{u}_3\vert_{z=0}    + (\lambda_3-\lambda_1)   \hat{w}\vert_{z=0}  \right) ,\\
C_3 & = \frac{1}{D_3} \left((\lambda_1 \Om_2 - \lambda_2 \Om_1) \hat{u}_3\vert_{z=0} + (\Om_2-\Om_1) \pa_z  \hat{u}_3\vert_{z=0}  + (\lambda_1 - \lambda_2)   \hat{w}\vert_{z=0}  \right).
\end{aligned}
\end{equation}
Nevertheless, the expressions in \eqref{formula} are not necessarily well defined, due to possible singularities at $\xi = 0$. In particular, if we want to apply Lemma \ref{lemmadecay}, we need the coefficient in front of $e^{-\lambda_1(\xi) z}$ to contain somehow some positive power of  $\xi$. 
Using the asymptotics of Lemma \ref{asymptote_lambda}, we compute
\begin{align}
 |C_1(\xi)| & \le | \hat{u}_3\vert_{z=0} | \: + \:  |\pa_z  \hat{u}_3\vert_{z=0}| +   |\hat{w}\vert_{z=0} | , \\
 |C_2(\xi)| & \le |\xi|  \, | \hat{u}_3\vert_{z=0} | \: + \:  |\pa_z  \hat{u}_3\vert_{z=0}|  + |\hat{w}\vert_{z=0}|, \\
 |C_3(\xi)| & \le |\xi| \, |\hat{u}_3\vert_{z=0} |  + |\pa_z  \hat{u}_3\vert_{z=0}|  +  |\hat{w}\vert_{z=0}| .
 \end{align}
 for small $|\xi|$. The asymptotics is given by 
 \begin{lemma} \label{boundarydatalowf}

\begin{enumerate}
\item 
The boundary data $\pa_z \hat u_3\vert_{z=0} , \hat w\vert_{z=0}$ in \eqref{BC}, as well as $\hat v_{0,3}\vert_{z=0}$ (which appears in 
$\hat{u}_3\vert_{z=0}$)   ``contain a power of $\xi$ at low frequencies". More precisely, for $\xi$ small enough, they can all be decomposed into terms of the form $\xi\cdot \hat f$, for some $f\in L^2_{uloc}(\R^2)$.   As a consequence, for any function $Q\in \mathcal C^\infty(\R^2)$,
\begin{multline*}
\left\| \chi(D) Q(D)\exp(-\lambda_1(D)z) \begin{pmatrix}
\pa_z \hat u_3\vert_{z=0}\\ \hat w\vert_{z=0}\\ \hat v_{0,3}\vert_{z=0} 
\end{pmatrix}\right\|_{L^\infty(\R^2)} \\\leq C (1+z)^{-1/3}\left(  \| (v_{0,1}, v_{0,2}) \|_{L^2_{uloc}(\R^2)} +  \| (\nu_1, \nu_2) \|_{L^2_{uloc}(\R^2)} + � \|(1+z)^{2/3} F \|_{H^m_{uloc}(\R^3_+)}\right)
\end{multline*}
and for $j=2,3$,
\begin{multline*}
\left\| \chi(D) Q(D)\exp(-\lambda_j(D)z) \begin{pmatrix}
\pa_z \hat u_3\vert_{z=0}\\ \hat w\vert_{z=0}\\ \hat v_{0,3}\vert_{z=0} 
\end{pmatrix}\right\|_{L^\infty(\R^2)} \\\leq C e^{-\delta z}\left(  \| (v_{0,1}, v_{0,2}) \|_{L^2_{uloc}(\R^2)} +  \| (\nu_1, \nu_2) \|_{L^2_{uloc}(\R^2)} + � \|(1+z)^{2/3} F \|_{H^m_{uloc}(\R^3_+)}\right).
\end{multline*}

\item
Concerning the boundary data $u_3^p\vert_{z=0}$ (which is the other term in $\hat{u}_3\vert_{z=0}$), we have, for any function $Q\in \mathcal C^\infty(\R^2)$,
$$\ba
\left\|\left( \chi(D) Q(D) \exp(-\lambda_1(D) z)\right) u_3^p\vert_{z=0} \right\|_{L^\infty(\R^2)}\leq C (1+z)^{-1/3}\|F\|_{L^1_{uloc}(\R^2)},\\
\left\|\left( \chi(D) Q(D) \exp(-\lambda_j(D) z)\right) u_3^p\vert_{z=0} \right\|_{L^\infty(\R^2)}\leq C e^{-\delta z}\|F\|_{L^1_{uloc}(\R^2)}.
\ea
$$
\end{enumerate}
 \end{lemma}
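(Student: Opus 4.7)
The plan is to treat the two items separately, both relying on Lemma \ref{lemmadecay} applied with $k=1$ and $\alpha=1$ (so $\alpha-k=0\geq -2$), which produces precisely the $(1+z)^{-1/3}$ decay for the $\lambda_1$-mode and exponential decay for $\lambda_2,\lambda_3$.

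For the first item, I would read off the structure directly from \eqref{BC}. The hypothesis $v_{0,3}=\pa_1\nu_1+\pa_2\nu_2$ with $\nu_j\in L^2_{uloc}$ immediately gives $\hat v_{0,3}=i\xi\cdot\hat\nu$. In the same vein, from \eqref{BC} one has
\[
\pa_z\hat u_3\vert_{z=0}=-i\xi\cdot\bigl(\hat v_{0,h}-\hat v_h^p\vert_{z=0}\bigr),\qquad
\hat w\vert_{z=0}=i\xi\cdot\bigl(\hat v_{0,2}-\hat v_2^p\vert_{z=0},\,-(\hat v_{0,1}-\hat v_1^p\vert_{z=0})\bigr),
\]
with $v_{0,h}=(v_{0,1},v_{0,2})$. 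The trace $v_h^p\vert_{z=0}$ is controlled in $L^\infty(\R^2)\hookrightarrow L^2_{uloc}(\R^2)$ by \eqref{estimate_source2} and \eqref{formulav1v2}, so each piece is of the form $\xi\cdot\hat f$ with $f\in L^2_{uloc}(\R^2)$, bounded by the right-hand side appearing in the statement. I would then apply Lemma \ref{lemmadecay} with $P(\xi)=p_1(\xi)Q(\xi)$, $p_1(\xi)\in\{\xi_1,\xi_2\}$, to obtain the announced bounds, using $L^2_{uloc}\hookrightarrow L^1_{uloc}$ to reconcile the norms.

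For the second item the factorisation is not algebraic, and I would instead exploit the divergence structure of the source through the representation \eqref{integralVbis}. Each $\hat S^k(\xi,z')$ in \eqref{def:S0}-\eqref{def:S12} is a horizontal $\xi$-polynomial of decreasing degree (at least $2$ for $k=0$, at least $1$ for $k=1$ and for the non-zero component of $k=2$) times components of $\hat F(\cdot,z')$. Combining this with the low-frequency expansions of $A_i,B_i,\Omega_i,\lambda_i$ given by Lemma \ref{lem:G} and Remark \ref{remark_roots}, I would show that $\hat v_3^p(\xi,0)$ can, near $\xi=0$, be written as a sum of terms of the form $p_1(\xi)\tilde Q(\xi)\hat g(\xi)$ with $\tilde Q\in\mathcal C^\infty(\R^2)$ and $g$ an $L^1_{uloc}$ function controlled by the appropriate norm of $F$. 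Lemma \ref{lemmadecay} then yields the required bounds.

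The main obstacle will be the low-frequency bookkeeping in the second step: the coefficient $B_1(\xi)\sim\bar B_1/|\xi|$ is singular at the origin, so one must track carefully how the $\xi$-factors hidden in each $\hat S^k$ and the additional $\lambda_i(\xi)$-factors produced by $\pa_z^k G$ interact with these singularities, in order to end up with the overall factorisation required by Lemma \ref{lemmadecay}. Once this is in place, the integration against $(1+z')^{-2/3}$ in $z'$ is handled exactly as in the main-estimate paragraph of Section~3.
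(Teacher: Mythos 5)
Your treatment of the first item is fine and matches the paper: the factorization $\xi\cdot\hat f$ is read off directly from \eqref{BC} and from the hypothesis $v_{0,3}=\pa_1\nu_1+\pa_2\nu_2$, the traces $v^p\vert_{z=0}$ are controlled via the estimates on $v^p$, and Lemma \ref{lemmadecay} with $k=\alpha=1$ gives the stated decay.

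The second item, however, contains a genuine gap, and it is precisely the point the paper flags as the delicate one. You propose to write $\hat v_3^p(\xi,0)$ near $\xi=0$ as a sum of terms $p_1(\xi)\tilde Q(\xi)\hat g(\xi)$ with $g\in L^1_{uloc}$ and then apply Lemma \ref{lemmadecay}. This cannot work for the $k=0$ contribution $\int_{\R_+}I^0_3(\cdot,0,z')\,dz'$, for a quantifiable homogeneity reason. The symbol multiplying $\hat F(\cdot,z')$ in the $i=1$ part of $I^0_3$ has total low-frequency homogeneity exactly $2$ (degree-$2$ polynomial times $A_1=O(1)$ for the $s^0_3$ piece, degree-$3$ polynomial times $B_1\sim|\xi|^{-1}$ for the $s^0_\omega$ piece). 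If you spend homogeneity $\beta$ on the $z'$-integral defining $g$, Lemma \ref{lemmadecay} gives $\|\cdot\|_{L^\infty}\lesssim(1+z')^{-\beta/3}\|F(\cdot,z')\|_{L^1_{uloc}}$, so against the weight $(1+z')^{-2/3}$ you need $\beta>1$ for $g$ to be defined at all; and to get the $(1+z)^{-1/3}$ decay from $e^{-\lambda_1(D)z}$ acting on the remaining factor $p_1(\xi)\tilde Q(\xi)$ you need the remaining homogeneity $\alpha\geq 1$. Since $\alpha+\beta=2$, both constraints cannot hold simultaneously: with $\beta=1$ the integral $\int_0^\infty(1+z')^{-1}dz'$ defining $g$ diverges, and with $\beta>1$ you only recover $(1+z)^{-\alpha/3}$ with $\alpha<1$. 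This is exactly why the paper states that ``if we factor out crudely a power of $\xi$ from the integral defining it, then the convergence of the remaining integral is no longer clear.''

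The correct route (the paper's) is to \emph{not} separate the $z'$-integration from the application of $e^{-\lambda_j(D)z}$: one estimates, for each fixed $z'$, the quantity $\chi(D)Q(D)e^{-\lambda_j(D)z}I^0_3(\cdot,0,z')$, whose $i=j=1$ part carries the combined exponential $e^{-\lambda_1(D)(z+z')}$ acting on a symbol of homogeneity $2$, hence is bounded by $(1+z+z')^{-2/3}\|F(\cdot,z')\|_{L^1_{uloc}}$ by Lemma \ref{lemmadecay}; only then does one integrate in $z'$ and use $\int_0^\infty(1+z+z')^{-2/3}(1+z')^{-2/3}dz'\leq C(1+z)^{-1/3}$ from Lemma \ref{lem:integrales}. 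The genuinely two-variable bound $(1+z+z')^{-2/3}$ is what saves the argument, and it is lost the moment you commute the $z'$-integral past the factorization. (The $k=1,2$ contributions, and the mixed cases $i\neq 1$ or $j\neq 1$, are harmless, as you correctly anticipate.)
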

\begin{proof}   The first part of the statement is obvious for the  last two boundary data, namely 
 $$\pa_z u_3\vert_{z=0} = -\pa_1 (v_{0,1} - v^p_1) -\pa_2 (v_{0,2} - v^p_2), \quad \mbox{ and } \:  w\vert_{z=0} = \pa_1 (v_{0,2} - v^p_2)  - \pa_2 (v_{0,1} - v^p_1). $$
 It remains to consider $ v_{0,3} $. This is where the assumption on $v_{0,3}$ in the theorem plays a role. Indeed, we have $v_{0,3} = - \pa_1 \nu_1 - \pa_2 \nu_2$, so that it satisfies the properties of the lemma.   The estimate is then a straightforward consequence of Lemma \ref{lemmadecay}.

\medskip 
The former argument does not work with the boundary data $u_3^p\vert_{z=0}$: indeed, if we factor out crudely a power of $\xi$ from the integral defining it, then the convergence of the remaining integral is no longer clear. Therefore we go back to the definition of $u^p_3$: we have, using the notations of \eqref{def_vbemol},
 $$
\chi(D) u_3^p\vert_{z=0}= \int_{\R_+}\sum_{k=0}^2 I^k_3(\cdot, 0, z')dz'.
 $$
 It can be easily checked that the terms with $I^k_3$ for $k=1,2$ do not raise any difficulty (in fact,  the trace stemming from these two terms contains a power of $\xi$ at low frequencies.) Thus we focus on 
 \begin{eqnarray*}
&& \int_{\R_+}I^0_3(\cdot, 0, z')dz'\\
&=& \int_{\R_+}  \left(\chi(D)   \sum_{i=1}^3 A_i(D) e^{-\lambda_i(D)z'}   s^0_3(\cdot,z')  
 \: + \: \chi(D)  \sum_{i=1}^3 B_i(D) e^{-\lambda_i(D)z'} s^0_\om(\cdot,z')\right)\:dz'. 
 \end{eqnarray*}
Applying $\exp(-\lambda_j (D)z)$ we have to estimate the $L^\infty(\R^2)$ norms of
$$\ba
 \int_{\R_+} \chi(D) Q(D)  \sum_{i=1}^3 A_i(D) e^{-\lambda_i(D)z' - \lambda_j(D)z}   s^0_3(\cdot,z')  \:dz'\\
 \text{and }\int_{\R_+}  \: \chi(D) Q(D) \sum_{i=1}^3 B_i(D) e^{-\lambda_i(D)z' - \lambda_j(D)z}  s^0_\om(\cdot,z')\:dz'.
 \ea
$$
We recall that $\widehat{s^0_3}(\xi,z')$, resp. $\widehat{s^0_\om}(\xi,z')$ is a product of components of $\hat{F}(\xi,z')$ by  homogeneous polynomials of  degree 2, resp. degree 3 in $\xi$, and that the behaviour of $A_i, B_i$ is given in Lemma~\ref{lem:G}. When $i=j=1$, using Lemma \ref{lemmadecay} and Lemma \ref{lem:integrales} in the Appendix, the corresponding integral is bounded by
\begin{multline*}
\int_{\R_+} \frac{1}{(1+ z + z')^{2/3}} \frac{1}{(1+z')^{2/3}} \:dz'\ \| (1+z)^{2/3} F \|_{L^\infty(L^1_{uloc})}\\
\leq C (1+z)^{-1/3}\| (1+z)^{2/3} F \|_{L^\infty(L^1_{uloc})}.
\end{multline*}
When $i=2,3$, the integral is bounded by
\begin{multline*}
\int_{\R_+} \frac{\exp(-\delta z')}{(1+ z )^{2/3}} \frac{1}{(1+z')^{2/3}} \:dz'\ \| (1+z)^{2/3} F \|_{L^\infty(L^1_{uloc})}\\
\leq C (1+z)^{-2/3}\| (1+z)^{2/3} F \|_{L^\infty(L^1_{uloc})}.
\end{multline*}
When $j=2,3$,  the integral is bounded by
\begin{multline*}
\int_{\R_+} \frac{\exp(-\delta z)}{(1+ z' )^{2/3}} \frac{1}{(1+z')^{2/3}} \:dz'\ \| (1+z)^{2/3} F \|_{L^\infty(L^1_{uloc})}\\
\leq C\exp(-\delta z)\| (1+z)^{2/3} F \|_{L^\infty(L^1_{uloc})}.
\end{multline*}
Gathering all the terms we obtain the estimate announced in the Lemma.
\end{proof} 
 
Going back to \eqref{formula}, we infer that
\begin{eqnarray}
&&(1+z)^{1/3}\| \chi(D) u_3(\cdot, z)\|_{L^\infty(\R^2)}+(1+z)^{2/3}\| \chi(D) w(\cdot, z)\|_{L^\infty(\R^2)}\label{est:sol-totale}\\
&\leq & C \left(  \| (v_{0,1}, v_{0,2}) \|_{L^2_{uloc}(\R^2)} +  \| (\nu_1, \nu_2) \|_{L^2_{uloc}(\R^2)} +  \|(1+z)^{2/3} F \|_{H^m_{uloc}(\R^3_+)}\right),\nonumber
\end{eqnarray}
Then, for further control of the horizontal components $(v_1,v_2)$, one would  like an analogue of \eqref{estimate_bemol_2}, namely a bound like: 
 \begin{eqnarray*}
 &&(1+z)^{1/3}\left\| \frac{D}{|D|^2} \chi(D)\pa_z u_3(\cdot, z)\right\|_{L^\infty(\R^2)}+(1+z)^{1/3}\left\|  \frac{D}{|D|^2} \chi(D) w(\cdot, z)\right\|_{L^\infty(\R^2)}\\
 &\leq & C \left(  \| (v_{0,1}, v_{0,2}) \|_{L^2_{uloc}(\R^2)} +  \| (\nu_1, \nu_2) \|_{L^2_{uloc}(\R^2)} +  \|(1+z)^{2/3} F \|_{H^m_{uloc}(\R^3_+)}\right).
 \end{eqnarray*}
However, such an estimate is not clear. Indeed, in view of \eqref{formula}, we have: 
\begin{equation*}
 \chi(D)  \begin{pmatrix} \pa_z u_3(\cdot, z) \\ w(\cdot, z) \end{pmatrix} = \chi(D) \sum_{i=1}^3 e^{-\lambda_i(D) z}  \begin{pmatrix} -\lambda_i(D) C_i   \\ -\Omega_i(D) C_i \end{pmatrix}. 
 \end{equation*}
  The term with index $i=1$ does not raise any difficulty, because $\lambda_1(D)$ and $\Omega_1(D)$ bring  extra powers of $\xi$, which are enough to apply Lemma \ref{lemmadecay}. But the difficulty comes from indices $2$ and $3$. For instance, they involve terms of the type 
$$ \chi(D) P_0(D) e^{-\lambda_{2,3}(D)} \hat{v}_{0}, \quad \mbox{with} \:  P_0 \:  \mbox{homogenenous of degree} \:  0, $$
and therefore are not covered by Lemma \ref{lemmadecay} : with the notations of the lemma, one has $\alpha = 0$, which is not enough.  Typically, these  homogeneous functions of degree zero involve Riesz transforms, meaning $P_0(\xi) = \frac{\xi_k \xi_l}{|\xi|^2}$, $k,l=1,2$. 

Hence, one must use extra cancellations. We recall that in view of  \eqref{formulav1v2}, we want to exhibit cancellations in $|D|^{-2}(D_1\pa_z u_3 + D_2 w)$ and in $|D|^{-2}(D_2\pa_z u_3 - D_1 w)$. Let us comment briefly on the first term.  We compute $(-\xi_1\lambda_i - \xi_2 \Om_i) C_i$ for $i=2,3$ in terms of the boundary data. Setting $u_0=v_0-v^p\vert_{z=0}$, we find that
\begin{eqnarray*}
C_2(\xi)&=& \frac{1}{D_3} (\lambda_3 \Om_1 - \lambda_1 \Om_3) \hat u_{0,3}  \\
&+& \frac{1}{D_3}\left[ \left((\Om_3-\Om_1) i\xi_1 - i\xi_2 (\lambda_3-\lambda_1)\right) \hat u_{0,1} + \left((\Om_3-\Om_1) i\xi_2 + i\xi_1 (\lambda_3-\lambda_1)\right) \hat u_{0,2}\right].
\end{eqnarray*}
 We then use the asymptotic formulas of Lemma \ref{asymptote_lambda}. In particular,
 $$\ba
 (-\xi_1\lambda_2 - \xi_2 \Om_2)\left((\Om_3-\Om_1) i\xi_1 - i\xi_2 (\lambda_3-\lambda_1)\right)=|\xi|^2 + O(|\xi|^3),\\
  (-\xi_1\lambda_2 - \xi_2 \Om_2)\left((\Om_3-\Om_1) i\xi_2 + i\xi_1 (\lambda_3-\lambda_1)\right) =-i|\xi|^2 + O(|\xi|)^3.
 \ea
 $$
A similar formula holds for $C_3$. It follows that there exist $Q_2,Q_3\in \mathcal C^\infty(\R^2)^2$ such that
\begin{eqnarray*}
&&\mathcal F(\chi(D)|D|^{-2}(D_1\pa_z u_3 + D_2 w))\\&=&\chi(\xi)\frac{-\xi_1 \lambda_1 - \xi_2 \Om_1}{D_3|\xi|^2} e^{-\lambda_1(\xi )z}C_1(\xi)
\\
&+& \frac{1}{D_3} \left[(\lambda_3 \Om_1 - \lambda_1 \Om_3)(-\xi_1\lambda_2 - \xi_2 \Om_2)e^{-\lambda_2 z} + (\lambda_1 \Om_2 - \lambda_2 \Om_1)(-\xi_1\lambda_3 - \xi_2 \Om_3)e^{-\lambda_3 z} \right]\hat u_{0,3}\\
&+& \sum_{i=2,3}\chi(\xi ) e^{-\lambda_i z}Q_i(\xi) \cdot \hat u_{0,h}(\xi,z).
\end{eqnarray*}
The first two terms are treated in the same way as Lemma \ref{boundarydatalowf}, factoring out a power of $\xi$ when necessary, and going back to the definition of $v^p$. We leave the details to the reader. The inverse Fourier transform of the last term is $\mathcal F^{-1} (\chi Q_i e^{-\lambda_i z})\ast u_{0,h}$, which is bounded in $L^\infty(\R^2)$ by $e^{-\delta z}\|u_{0,h}\|_{L^2_{uloc}}$. Similar statements hold for $\chi(D)|D|^{-2}(-\pa_z D_2 u_3 + D_1 w )$.
  It follows that
\begin{multline}\label{est:u-BF}
(1+z)^{1/3} \|\chi(D) u(\cdot, z)\|_{L^\infty(\R^2)}\\
 \leq C \left(  \| (v_{0,1}, v_{0,2}) \|_{L^2_{uloc}(\R^2)} +  \| (\nu_1, \nu_2) \|_{L^2_{uloc}(\R^2)} +  \|(1+z)^{2/3} F \|_{H^m_{uloc}(\R^3_+)}\right).
\end{multline}

  We now address the estimates of $\hat u(\xi, z)$ for large frequencies. The arguments are very close to the ones developed after Lemma \ref{lemmadecay2}. Using \eqref{formula} and \eqref{C_i}, we find that for $|\xi|\gg 1$, $\hat u_3(\xi, z)$ and $\hat w(\xi, z)$ can be written as linear combinations of terms of the type
  $$
  R_{ij}(\lambda_1, \lambda_2, \lambda_3, \xi) \exp(-\lambda_i(\xi) z) \hat g_j(\xi),\quad 1\leq i,j\leq 3,
  $$
  where $g_1=u_3\vert_{z=0}$, $g_2=\pa_z u_3\vert_{z=0}$ and $g_3= w\vert_{z=0}$ and $R_{ij}$ is a rational expression. Thus, using Lemma \ref{asymptote_lambda} and Lemma \ref{lem:G}, there exists $n\in \N$ such that $|\xi|^{-2n}   R_{ij}(\lambda_1, \lambda_2, \lambda_3, \xi) $ is bounded as $|\xi|\to\infty$ for all $i,j$. Lemma \ref{lemmadecay2} then entails that for some $N$ sufficiently large,
  $$
  \ba
 \| (1-\chi)(D) u_3(\cdot, z)\|_{L^\infty(\R^2)}\leq C e^{-\delta z}\sum_{j=1}^3\|g_j\|_{H^N_{uloc}},\\
 \| (1-\chi)(D) w(\cdot, z)\|_{L^\infty(\R^2)}\leq C  e^{-\delta z}\sum_{j=1}^3\|g_j\|_{H^N_{uloc}},\ea
$$
  and similar estimates hold for $ \frac{D}{|D|^2}\pa_z u_3$ and $ \frac{D}{|D|^2} w$. Using \eqref{BC} and \eqref{Jk}-\eqref{Jkbis}, we infer that for some $m\geq 1$ large enough,
\be\label{est:u-HF}
  \|(1-\chi(D)) u(\cdot, z)\|_{L^\infty(\R^2)}\leq Ce^{-\delta z} \left(  \| v_0 \|_{H^{m+\frac{1}{2}}_{uloc}(\R^2)} +  \|(1+z)^{2/3} F \|_{H^m_{uloc}(\R^3_+)}\right).
\ee
  Gathering \eqref{est:u-BF} and \eqref{est:u-HF}, we deduce that $u$ satisfies the estimate:
  $$  \|(1+z)^{1/3} u \|_{L^\infty} \le C \: \left(   \| v_0 \|_{H^{m+\frac{1}{2}}_{uloc}(\R^2)} +  +  \| (\nu_1, \nu_2) \|_{L^2_{uloc}(\R^2)}  + \|(1+z)^{2/3} F \|_{H^m_{uloc}(\R^3_+)}\right) $$
for $m$ large enough. Thus, in view of the estimate \eqref{estimate_source} satisfied by $v^p$, $v=u+ v^p$ is a solution of \eqref{SC_source} satisfying 
  $$  \|(1+z)^{1/3} v \|_{L^\infty} \le C \: \left(   \| v_0 \|_{H^{m+\frac{1}{2}}_{uloc}(\R^2)} +  +  \| (\nu_1, \nu_2) \|_{L^2_{uloc}(\R^2)}  + \|(1+z)^{2/3} F \|_{H^m_{uloc}(\R^3_+)}\right) $$
  for $m$ large enough. It remains to go to the higher regularity bound \eqref{theestimate}. First,  up to taking a slightly larger $m$, we clearly have: 
$$  \|(1+z)^{1/3} \na  v \|_{L^\infty} \le C \: \left(   \| v_0 \|_{H^{m+\frac{1}{2}}_{uloc}(\R^2)} +  +  \| (\nu_1, \nu_2) \|_{L^2_{uloc}(\R^2)}  + \|(1+z)^{2/3} F \|_{H^m_{uloc}(\R^3_+)}\right). $$
This follows from direct differentiation of formula \eqref{integralV} satisfied by $v^p$ and formula \eqref{formula} satisfied by $u = v - v^p$. Clearly, the differentiation is harmless, in particular at low frequencies where it may even add positive powers of $\xi$. 
It follows that our solution belongs to $H^1_{uloc}(\R^3_+)$, and thus enters the framework of local elliptic regularity theory for the Stokes equation. 
In particular, for any $k \in \Z^3$ with $k_z \le 2$:  
\begin{align*}
 & \| v \|_{H^{m+1}(B(k,1) \cap \Omega_{bl})} \\ 
 & \le C 
\left( \| v_0 \|_{H^{m+\frac{1}{2}}_{uloc}(\R^2)}   +  \| (\nu_1, \nu_2) \|_{L^2_{uloc}(\R^2)}  + \| F \|_{H^m_{uloc}(\R^3_+)}  +  \| v \|_{H^1(B(k,2)  \cap \Omega_{bl})}    \right) \\
 & \le C 
\left( \| v_0 \|_{H^{m+\frac{1}{2}}_{uloc}(\R^2)}   +  \| (\nu_1, \nu_2) \|_{L^2_{uloc}(\R^2)}  + \| F \|_{H^m_{uloc}(\R^3_+)}  +  \| v \|_{H^1_{uloc}(\R^3_+)}    \right)
 \end{align*} 
  and for any $k \in \Z^3$ with $k_z > 2$: 
  \begin{align*}
 & \| v \|_{H^{m+1}(B(k,1) \cap \Omega_{bl})} \\ 
 & \le C    
\left( \| F \|_{H^m(B(k,2) \cap \Omega_{bl})}  +  \| v \|_{H^1(B(k,2)  \cap \Omega_{bl})}    \right) \\
& \le C |k_z|^{-1/3} \left( \| (1+z)^{2/3} F \|_{H^m_{uloc}(\R^3_+)}  +  \|(1+y)^{1/3} v \|_{H^1_{uloc}(\R^3_+)}     \right)
 \end{align*} 
The bound \eqref{theestimate} follows.

\section{Proof of Theorem \ref{Thm_WP}}

\subsection{Navier-Stokes Coriolis system in a half space}

This paragraph is devoted to the well-posedness of system \eqref{NSC1} under a smallness assumption. Once again, we can assume $M=0$ with no loss of generality. Following the analysis of the linear case performed in the previous section, we introduce the functional spaces
$$
\cH^m:=\left\{ v \in H^{m}_{loc}(\R^3_+),\ \|(1+y_3)^{1/3} v\|_{H^m_{uloc}} <+\infty\right\},\quad m\geq 0,
$$
and we set $\|v\|_{\cH^m} =C_m\|(1+y_3)^{1/3} v\|_{H^m_{uloc}} $, where the constant $C_m$ is chosen so that if $u,v\in (\mathcal H^m)^3$ for some $m>3/2$, then
$$
\|u\otimes v\|_{\cH^m}\leq \|u\|_{\cH^m}\|v\|_{\cH^m}.
$$
 Clearly $\cH^m$ is a Banach space for all $m\geq 0$.

The result proved in this section is the following :
\begin{proposition}
\label{prop:NS-1}
Let $m\in \N, m\gg 1$. There exists $\delta_0>0$ such that for all $v_0\in H^{m+1}_{uloc}(\R^2)$ such that $v_{0,3} = \pa_1 \nu_1 + \pa_2 \nu_2$, with $\nu_1$, $\nu_2$ in 
$L^2_{uloc}(\R^2)$ and
\be\label{hyp:smallness}
\| v_0 \|_{H^{m+1}_{uloc}(\R^2)} +  \| (\nu_1, \nu_2) \|_{L^2_{uloc}(\R^2)} \leq \delta_0,\ee
then the system
$$
\left\{
\begin{aligned}
v \cdot \na v + e \times v  + \na p - \Delta v  & = 0 \inside \{y_3 > 0\} \\
\div v & = 0  \inside \{y_3 > 0\} \\
v\vert_{ \{y_3 = 0\} } & = v_0
\end{aligned}
\right.
$$
has a unique solution in $\cH^{m+1}$.
\end{proposition}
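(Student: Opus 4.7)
The natural strategy is a Banach fixed point argument built directly on Theorem \ref{thm_SCsource}. Since any divergence-free $\bar v$ satisfies $\bar v\cdot \na \bar v = \dv(\bar v\otimes \bar v)$, the nonlinearity is a divergence, which fits exactly the source structure tolerated by \eqref{SC_source}. The idea is to define, for each divergence-free $\bar v$ in a small ball of $\cH^{m+1}$, the map $T(\bar v) := v$ where $v$ is the unique solution, provided by Theorem~\ref{thm_SCsource}, of the linear Stokes-Coriolis system
\begin{equation*}
e\times v + \na p - \Delta v = -\dv(\bar v\otimes \bar v),\quad \dv v = 0,\quad v\vert_{y_3=0} = v_0.
\end{equation*}
The low-frequency structural hypothesis $v_{0,3} = \pa_1\nu_1 + \pa_2\nu_2$ is that of the proposition and is frozen throughout the iteration; the source is manifestly a divergence; hence Theorem~\ref{thm_SCsource} genuinely applies at each step. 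A fixed point of $T$ is precisely a solution of the proposition.

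The stability estimate reads, by Theorem~\ref{thm_SCsource},
\begin{equation*}
\|T(\bar v)\|_{\cH^{m+1}} \le C\bigl(\|v_0\|_{H^{m+\frac{1}{2}}_{uloc}} + \|(\nu_1,\nu_2)\|_{L^2_{uloc}} + \|(1+y_3)^{2/3}\,\bar v\otimes \bar v\|_{H^m_{uloc}}\bigr).
\end{equation*}
The crucial algebraic observation is that the weights match perfectly:
\begin{equation*}
(1+y_3)^{2/3}\,\bar v\otimes \bar v = \bigl[(1+y_3)^{1/3}\bar v\bigr] \otimes \bigl[(1+y_3)^{1/3}\bar v\bigr].
\end{equation*}
For $m\gg 1$ (in particular $m>3/2$), the Sobolev algebra property of $H^m_{uloc}(\R^3)$ combined with the normalization constant $C_m$ built into $\|\cdot\|_{\cH^m}$ yields $\|\bar v\otimes \bar v\|_{\cH^m} \le \|\bar v\|_{\cH^m}^2 \le \|\bar v\|_{\cH^{m+1}}^2$. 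Hence
\begin{equation*}
\|T(\bar v)\|_{\cH^{m+1}} \le C\delta_0 + C\|\bar v\|_{\cH^{m+1}}^2,
\end{equation*}
so that $T$ sends the closed ball of radius $R := 2C\delta_0$ into itself, provided $\delta_0$ is small enough.

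The contraction property comes from the same estimate applied to $T(v_1) - T(v_2)$, which solves the linear problem with vanishing boundary data and source $-\dv\bigl((v_1 - v_2)\otimes v_1 + v_2\otimes (v_1 - v_2)\bigr)$. Theorem~\ref{thm_SCsource} combined with the product estimate gives
\begin{equation*}
\|T(v_1) - T(v_2)\|_{\cH^{m+1}} \le C\bigl(\|v_1\|_{\cH^{m+1}} + \|v_2\|_{\cH^{m+1}}\bigr)\|v_1 - v_2\|_{\cH^{m+1}} \le 4C^2\delta_0\,\|v_1 - v_2\|_{\cH^{m+1}},
\end{equation*}
which is a strict contraction on the ball for $\delta_0$ sufficiently small. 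Banach's fixed point theorem then produces a unique fixed point in the ball. Uniqueness in the whole space $\cH^{m+1}$ follows from the quadratic a priori inequality $\|v\|_{\cH^{m+1}} \le C\delta_0 + C\|v\|_{\cH^{m+1}}^2$: for $\delta_0$ small, the admissible values of $\|v\|_{\cH^{m+1}}$ are trapped below a small threshold, putting any solution inside the ball where uniqueness is already established.

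The main obstacle is conceptual rather than technical: the whole functional framework of Theorem~\ref{thm_SCsource}, with its weight $(1+y_3)^{2/3}$ on the source and $(1+y_3)^{1/3}$ on the solution, was engineered precisely so that the algebraic identity $(1+y_3)^{2/3} = [(1+y_3)^{1/3}]^2$ closes the loop against the quadratic nonlinearity. Once this matching is in place, the proof reduces to a standard Picard iteration, and the only verification needed is that the Sobolev product estimate in $H^m_{uloc}(\R^3)$ holds with the right constants---which is why the paper fixes $C_m$ in the definition of $\|\cdot\|_{\cH^m}$ and takes $m$ sufficiently large.
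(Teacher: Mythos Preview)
Your proof is correct and follows essentially the same route as the paper: define $T(\bar v)$ as the solution of the linear Stokes--Coriolis problem with source $F=\bar v\otimes\bar v$, use Theorem~\ref{thm_SCsource} together with the weight identity $(1+y_3)^{2/3}=[(1+y_3)^{1/3}]^2$ and the algebra property of $H^m_{uloc}$ to get a quadratic bound, and close by Banach's fixed point theorem on a small ball. One small caveat: your final sentence on uniqueness in all of $\cH^{m+1}$ is not quite an argument---the inequality $\|v\|\le C\delta_0+C\|v\|^2$ allows both a small and a large branch, so without a continuity or connectedness argument you cannot conclude that every solution lies in the small ball; the paper in fact only asserts uniqueness via the contraction on the ball.
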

\begin{remark}
The integer $m$ for which this result holds is the same as the one in Theorem \ref{thm_SCsource}.
\end{remark}
\begin{proof}
Proposition \ref{prop:NS-1} is an easy consequence of the fixed point theorem in $\cH^{m+1}$. For any  $v_0\in H^{m+1}_{uloc}(\R^2)$ such that $v_{0,3} = \pa_1 \nu_1 + \pa_2 \nu_2$, with $\nu_1$, $\nu_2$ in 
$L^2_{uloc}(\R^2)$, we introduce the mapping $T_{v_0}:\cH^{m+1}\to \cH^{m+1}$ such that $T_{v_0}(u)=v$ is the solution of \eqref{SC_source} with $F=u\otimes u$. Notice that $\|(1+z)^{2/3} F\|_{H^m_{uloc}}\leq \|u\|_{\cH^m}^2$. As a consequence, according to Theorem \ref{thm_SCsource}, there exists a constant $C_0$ such that for all $u\in \cH^{m+1}$,
\be\label{estT}
\|T_{v_0}(u)\|_{\cH^{m+1}}\leq C_0\left( \| v_0 \|_{H^{m+1}_{uloc}(\R^2)} +  \| (\nu_1, \nu_2) \|_{L^2_{uloc}(\R^2)} + \|u\|_{\cH^{m+1}}^2\right).
\ee

Let $\delta_0<\frac{1}{4C_0^2}$, and assume that \eqref{hyp:smallness} is satisfied. Thanks to the assumption on $\delta_0$, there exists $R_0>0$ such that
\be\label{R_0}
C_0(\delta_0 + R_0^2)\leq R_0.
\ee
Moreover, $R_0\in [R_-, R_+]$, where
$$
R_\pm=\frac{1}{2C_0}(1\pm\sqrt{1-4\delta_0 C_0^2}).
$$
Therefore $0<R_-<(2C_0)^{-1}$, and we can always choose $R_0$ so that $2R_0 C_0<1$.
Then according to \eqref{hyp:smallness}, \eqref{estT} and \eqref{R_0},
$$
\|u\|_{\cH^{m+1}}\leq R_0\Rightarrow \|T_{v_0}(u)\|_{\cH^{m+1}}\leq R_0.
$$
Moreover, if $\|u^1\|_{\cH^{m+1}}, \|u^2\|_{\cH^{m+1}}\leq R_0$, then setting $w=T_{v_0}(u^1)-T_{v_0}(u^2)$, $w$ is a solution of \eqref{SC_source} with $w_{|z=0}=0$ and with a source term $F^1-F^2= u^1\otimes u^1 - u^2\otimes u^2$. Thus, using once again Theorem \ref{thm_SCsource} and the normalization of $\|\cdot\|_{\cH^m}$,
$$
\|T_{v_0}(u^1)-T_{v_0}(u^2)\|_{\cH^{m+1}}\leq C_0 \|F^1-F^2\|_{\cH^m}\leq 2 C_0 R_0 \|u^1-u^2\|_{\cH^{m+1}}. 
$$
Notice that in the inequality above, we have assumed that $\|\cdot \|_{\cH^m}\leq \|\cdot \|_{\cH^{m+1}}$, which is always possible if the normalization constant $C_m$ is chosen sufficiently small (depending on $C_{m+1}$, $m$ being large but fixed).

Thus, since $2 C_0 R_0 <1$,  $T_{v_0}$ is a contraction over the ball of radius $R_0$ in $\cH^{m+1}$. Using Banach's fixed point theorem, we infer that $T_{v_0}$ has a fixed point in $\cH^{m+1}$. This concludes the proof of Proposition \ref{prop:NS-1}.
\end{proof}

\subsection{Navier-Stokes-Coriolis system over a bumped half-plane}

We now address the study of the full system \eqref{BL}. We follow the steps outlined in the introduction which we recall here for the reader's convenience: we first prove that there exists a solution $(v^-, p^-)$ of the system \eqref{NSC2} for $\phi,\psi$ in some function spaces to be specified, then construct the solution $(v^+, p^+)$ of \eqref{NSC1} with $v^+\vert_{|y_3=M}= v^-\vert_{|y_3=M}$. Eventually, we define a mapping $\cF$ by $\cF(\phi,\psi):=\Sigma(v^+,p^+)e_3\vert_{y_3=M} -\psi$. We recall that $v=\mathbf 1_{y_3\geq M} v^+ + \mathbf 1_{y_3<M} v^-$ is a solution of \eqref{BL} if and only if $\cF(\phi,\psi)=0$. The goal is therefore to show that for all $\phi$ small enough (in a function space to be specified) the equation $\cF(\phi,\psi)=0$ has a unique solution.

\medskip
{\em Step 1}. We study the system \eqref{NSC2}. We introduce the following function space for the bottom Dirichlet data:
\begin{equation} \label{calV}
\cV\defin \{\phi=(\phi_h, \phi_3),\ \phi_h \in H^2_{uloc}(\pa \Omega_{bl}),\ \phi_3\in H^1_{uloc}(\pa \Omega_{bl}), \quad \phi \cdot n\vert_{\pa \Omega_{bl}} = 0 \}
\end{equation}
and we set
$$
\|\phi\|_{\cV }\defin \|\phi_h\|_{H^2_{uloc}} + \|\phi_3\|_{H^1_{uloc}} 
$$
As for the stress tensor at $y_3=M$, since we will need to construct solutions in $H^{m+1}_{uloc}$ (see Proposition \ref{prop:NS-1}), we look for $\psi$ in the space $H^{m-\frac{1}{2}}_{uloc}(\R^2)$.
We then claim that the following result holds:
\begin{lemma} \label{WPmoins}
Let $m\geq 1$ be arbitrary.
There exists $\delta>0$ such that for all $\phi \in \cV$  and all $\psi\in H^{m-\frac{1}{2}}_{uloc}(\R^2)$ with $\|\phi\|_{\cV} \leq \delta$, $\|\psi\|_{H^{m-\frac{1}{2}}_{uloc}(\R^2)}\leq\delta$, system \eqref{NSC2} has a unique solution 
$$(v^-,p^-)\in H^1_{uloc}(\Om_{bl}^M) \times  L^2_{uloc}(\Om_{bl}^M).$$
Moreover, it satisfies  the following properties:
\begin{itemize}
\item
$H^{m+1}_{uloc}$ regularity: for all $M'\in ]\sup \gamma, M[$, 
$$(v^-, p^-)\in H^{m+1}_{uloc}(\R^2\times (M', M)) \times H^m_{uloc}(\R^2\times (M', M)), $$
$$
\mbox{with} \quad \|v^-\|_{ H^{m+1}_{uloc}(\R^2\times (M', M))}+ \|p^-\|_{H^m_{uloc}(\R^2\times (M', M))} \leq C_{M'} (\|\phi\|_{\cV} +\|\psi\|_{H^{m-\frac{1}{2}}_{uloc}(\R^2)}).
$$
\item Compatibility condition: there exists $\nu_1, \nu_2\in H^{1/2}_{uloc}$ such that $v_3^-\vert_{|y_3=M}=\na_h\cdot \nu_h$.
\end{itemize}
\label{lem:v-}
\end{lemma}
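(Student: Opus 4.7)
The plan is to treat \eqref{NSC2} as a Navier--Stokes--Coriolis problem in the vertically bounded domain $\Omega_{bl}^M$ with mixed Dirichlet/stress conditions, and to exploit the bounded vertical direction to run a Ladyzhenskaya--Solonnikov type local energy argument in uniformly local spaces. First I would lift the bottom data: using $\phi \cdot n\vert_{\pa \Omega_{bl}} = 0$ and $\phi \in \cV$, I would construct a divergence-free vector field $\Phi \in H^2_{uloc}(\Omega_{bl}^M)$ that agrees with $\phi$ on $\pa \Omega_{bl}$, vanishes in a neighborhood of $\{y_3 = M\}$, and satisfies $\|\Phi\|_{H^2_{uloc}} \le C \|\phi\|_{\cV}$ (for instance via a Bogovskii-type correction of the obvious extension, applied locally in horizontal windows since $\phi \in H^2_{uloc}$). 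Setting $w \defin v^- - \Phi$, the problem becomes a Stokes--Coriolis system for $w$ with homogeneous Dirichlet data at the bottom, modified stress datum $\psi - \Sigma(\Phi,0)e_3\vert_{y_3=M}$ at the top, and source terms $\div F$ with $F$ bilinear in $w+\Phi$ plus linear terms in $\Phi$.

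Next I would establish $H^1_{uloc}$ existence through horizontally localized energy estimates. For each $k \in \Z^2$, let $\chi_k$ be a smooth cut-off supported in $B(k,2)$ with $\chi_k \equiv 1$ on $B(k,1)$, and test the weak formulation of the equation for $w$ against $\chi_k^2 w$. The Coriolis term $e \times w$ is antisymmetric and drops out, the boundary integral at $y_3 = M$ produces the $\psi$ contribution (which is controlled via the trace space $H^{-1/2}_{uloc}$), the Poincar\'e inequality holds because $w$ vanishes at $y_3 = \gamma(y_1,y_2)$ and $\gamma$ is bounded, so $\int \chi_k^2 |w|^2 \le C \int \chi_k^2 |\na w|^2$, and commutators with $\chi_k$ are absorbed through a Solonnikov-type iteration over neighboring windows. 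This yields a uniform bound
$$ \|w\|_{H^1_{uloc}(\Omega_{bl}^M)} \le C\bigl(\|\phi\|_{\cV} + \|\psi\|_{H^{-1/2}_{uloc}} + \|w\|_{H^1_{uloc}}^2 + \|\Phi\|^2_{H^2_{uloc}}\bigr), $$
where the quadratic term comes from the convective nonlinearity in $F$. Under the smallness assumption on $\phi$ and $\psi$, this inequality is solved by a Banach fixed-point argument over a small ball of $H^1_{uloc}$: the map sending $w$ to the solution of the linear Stokes--Coriolis problem with source $(w+\Phi)\otimes(w+\Phi)$ is a contraction for $\delta$ small enough, yielding existence and uniqueness.

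Higher regularity on the slab $\R^2 \times (M', M)$ is obtained by local elliptic regularity for the stationary Stokes system: once $v^- \in H^1_{uloc}$, the convective term $v^- \cdot \na v^-$ lies in $L^2_{uloc}$ on any interior slab, Stokes regularity (applied window-by-window, together with the $H^{m-1/2}_{uloc}$ regularity of $\psi$ at the top) raises the regularity by one derivative, and a standard bootstrap gives the $H^{m+1}_{uloc} \times H^m_{uloc}$ estimate with a constant $C_{M'}$ depending on the distance from the rough bottom. The compatibility condition is then immediate: from $\div v^- = 0$ in $\Omega_{bl}^M$ and $v^- \cdot n\vert_{y_3=\gamma} = 0$, defining
$$ \nu_h(y_1, y_2) \defin -\int_{\gamma(y_1,y_2)}^M v_h^-(y_1,y_2,z)\,dz, $$
a direct differentiation under the integral (taking the boundary term at $y_3 = \gamma$ into account and using $v_3^-\vert_{y_3=\gamma} = v_h^-\vert_{y_3=\gamma} \cdot \na_h \gamma$) gives $\div_h \nu_h = v_3^-\vert_{y_3=M}$; the $H^{1/2}_{uloc}$ bound on $\nu_h$ follows from the trace theorem and the slab regularity.

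The main obstacle I expect is the first step of the energy argument, namely closing the Solonnikov-type local inequality in $H^1_{uloc}$ with the Neumann-type condition at the top: the nonlinear $|v|^2/2$ piece inside $\Sigma$ produces a boundary term that is cubic in $w$ and must be absorbed using smallness, while horizontal commutator errors need to be handled by the window-to-window iteration without being spoiled by the absence of decay in $(y_1, y_2)$. Once this linear local-energy estimate is in place, the rest of the proof follows the standard fixed-point/regularity pattern.
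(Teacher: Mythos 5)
Your overall strategy (divergence-free lift of the bottom data, Ladyzhenskaya--Solonnikov truncated energies exploiting the bounded vertical direction, smallness to absorb the quadratic terms, window-by-window elliptic bootstrap, and the explicit $\nu_h=-\int_{\gamma}^M v_h^-$ for the compatibility condition) is exactly the route the paper takes. But there is a concrete gap in your energy step: you propose to test the equation against $\chi_k^2 w$, which is \emph{not} divergence-free, so the pressure does not drop out --- it leaves a commutator $\int_{\Omega_{bl}^M} \tilde p \, \na(\chi_k^2)\cdot w$ that your sketch never controls. Since no a priori $L^2_{uloc}$ bound on the pressure is available at this stage (in the paper it is only recovered \emph{after} the velocity bound, via duality against $H^1_0$ test functions and the Ne\v{c}as inequality), the induction inequality on $E_k$ does not close as written. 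The paper avoids this by testing against the specially built divergence-free field $\varphi=\bigl(\chi_k\tv_h,\,-\na_h\cdot(\chi_k\int_{\gamma}^{y_3}\tv_h)\bigr)$, whose vertical component compensates the horizontal cut-off; if you insist on $\chi_k^2 w$ you must add a local Bogovskii-type pressure estimate in each window before the Solonnikov iteration, which is a genuine extra step, not a routine commutator absorption.

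A second, smaller point: your bootstrap ``standard Stokes regularity window-by-window'' glosses over the nonlinear stress condition at $y_3=M$. The induction $H^l\Rightarrow H^{l+1}$ uses that $h\in H^{s}(\R^2)\Rightarrow h^2\in H^{s}(\R^2)$ to handle the $\tfrac12|v^-|^2 e_3$ term in $\Sigma$, and this product law only holds for $s>1$; consequently the base case $l=2$ cannot be obtained from the generic induction and must be treated separately (the paper does this by differentiating the localized system horizontally, correcting the divergence, and then recovering $\pa_3$-regularity from a Stokes system with Dirichlet data). Your construction of the solution by a fixed point in a small ball of $H^1_{uloc}$ is legitimate provided you invoke the linear Stokes--Coriolis solvability in $\Omega_{bl}^M$ from \cite{DalibardPrange}; the paper instead builds approximate solutions on laterally bounded domains and passes to the limit using the $E_k$ induction, which amounts to the same thing. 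The compatibility computation is correct and matches the paper's.
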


\begin{proof}  We start with an $H^1_{uloc}$ {\it a priori} estimate. We follow the computations of \cite{DalibardPrange}, dedicated to the linear Stokes-Coriolis system.  We first lift the boundary condition on $\pa \Om_{bl}$, introducing 
$$
v^L_h\defin \phi_h,\ v^L_3\defin \phi_3 - \na_h\cdot\phi_h(y_3-\gamma(y_h)).
$$
Then $\tv\defin v^--v^L$, $\tilde p = p^-$ satisfy
\be\label{NSC-h}\ba
- \Delta \tv + (v^L + \tv)\cdot \na \tv + \tv \cdot \na v^L +   e_3 \wedge \tv + \na \tilde p = f \text{ in } \Om_{bl}^M,\\
\dv \tv=0 \text{ in } \Om_{bl}^M,\\
\tv\vert_{\pa \Om_{bl}}=0,\\
\left( \pa_3 \tv - \left(\tilde p + \frac{|\tv + v^L|^2 }{2}\right) e_3\right)\vert_{y_3=M}=\psi -\pa_3 v^L\vert_{y_3=M}\defin \tilde \psi,\ea
\ee
where $f=-\Delta v^L + v^L\cdot \na v^L + e_3 \wedge v^L$.

\medskip
Notice that thanks to the regularity assumptions on $\phi$ and $\nu$, we have $\tilde \psi\in L^2_{uloc}(\R^2)$ and $f\in H^{-1}_{uloc}(\R^2)$. We then perform energy estimates on the system \eqref{NSC-h}, following the strategy of G\'erard-Varet and Masmoudi in \cite{DGVNMnoslip}, which is inspired from the work of Ladyzhenskaya and Solonnikov \cite{LS}. The idea is to work with the truncated energies
\begin{equation}\label{def:E_k}
E_k:=\int_{\Omega^M_{bl} \cap \{ (y_1,y_2) \in  [-k,k]^2\}} \nabla \tv\cdot\nabla \tv,
\end{equation}
and to derive an induction inequality on $(E_k)_{k\in \N}$. To that end, we consider a truncation function $\chi_k\in \mathcal C^\infty_0(\R^2)$ such that $\chi_k\equiv 1 $ in $[-k,k]^2$, $\Supp \chi_k \subset [-k-1, k+1]^2$, and $\chi_k, \chi_k',\chi_k''$ are bounded uniformly in $k$. Along the lines of  \cite{DalibardPrange}, we multiply \eqref{NSC-h} by the test function
\begin{eqnarray*}
\varphi&=&
\left(
\begin{array}{c}
\varphi_h\\
\nabla\cdot\Phi_h
\end{array}
\right):=\left(\begin{array}{c}
\chi_k \tv_h\\
-\nabla_h\cdot\left(\chi_k\int_{\gamma(y_h)}^{y_3}\tv_h(y_h,z)dz\right)
\end{array}
\right)\in H^1(\Omega^b),\\
&=&\chi_k \tv - \begin{pmatrix}
0\\ \na_h \chi_k(y_h)\cdot \int_{\gamma(y_h)}^{y_3}\tv_h(y_h,z)dz
\end{pmatrix}.
\end{eqnarray*}
Since this test function is divergence-free, there is no commutator term stemming from the pressure. In the work \cite{DalibardPrange}, an inequality of the following type is derived:
 $$  E_k \leq C \left( (E_{k+1} - E_k) + (\|\phi\|_{\cV}^2 + \|\psi\|^2_{H^{-\frac{1}{2}}_{uloc}})(k+1)^2\right).  $$
This discrete differential inequality is a key {\it a priori estimate}, that allows for the construction of a solution.  Indeed, introducing an approximate solution $\tilde v^n$ for $|y_1,y_2| \le n$, say with Dirichlet boundary conditions at the lateral boundary, a standard estimate yields that 
$ E_n   \le C n$, where this time $E_k = \int |\chi_k \na \tilde v^n|^2$.  Combining this information with above induction relation allows to obtain a uniform bound 
on the $E_k$'s of the type $E_k\leq C k^2$, from which we deduce  a $H^1_{uloc}$ bound on $\tilde v^n$ uniformly in $n$. From there, one obtains an exact solution by compactness. We refer to \cite{DalibardPrange} for more details. 

\medskip
Here, there are two noticeable differences with \cite{DalibardPrange}:
\begin{itemize}
\item The boundary condition at $y_3=M$ in \eqref{NSC-h} does not involve a Dirichlet-Neumann operator, which makes things easier.
\item On the other hand, one has to handle the quadratic terms $(v^L + \tv)\cdot \na \tv + \tv \cdot \na v^L$, which explains the introduction of the $|v|^2$ in the stress tensor at $y_3=M$.
\end{itemize}
Therefore we focus on the treatment of these nonlinear terms. The easiest one is
$$
\left|\int_{\Om_{bl}^M}\left(\tv \cdot \na v^L \right)\cdot \varphi \right| \leq C \|\phi\|_{\cV}  E_{k+1},
$$
where the constant $C$ depends only on $M$ and on $\|\gamma\|_{W^{1,\infty}}$.
On the other hand,
\begin{eqnarray*}
\int_{\Om_{bl}^M} \left((v^L + \tv)\cdot \na \tv\right)\cdot (\chi_k \tv)&=& \int_{\Om_{bl}^M} \chi_k(v^L + \tv)\cdot \na \frac{|\tv|^2}{2}\\
&=&- \int_{\Om_{bl}^M} \frac{|\tv|^2}{2}(v^L + \tv) \cdot \na \chi_k+\int_{\R^2} \chi_k \left((v^L_3 + \tv_3) \frac{|\tv|^2}{2}\right)\vert_{y_3=M}.
\end{eqnarray*}
The first term in the right-hand side is bounded by $C(E_{k+1}-E_k)^{3/2} + C\|\phi\|_{\cV} (E_{k+1}-E_k)$. We group the second one with the boundary terms stemming from the pressure and the laplacian. The sum of these three boundary terms is
$$
\int_{\R^2} \chi_k \left[ -\pa_3 \tv \cdot \tv +  (v^L_3 + \tv_3) \frac{|\tv|^2}{2} + p^- \tv_3 \right]\vert_{y_3=M}.
$$
Using the boundary condition in \eqref{NSC-h}, the integral above is equal to
$$
-\int_{\R^2} \chi_k \tv\vert_{y_3=M}\cdot \left(\tilde \psi + \left(\tilde v \cdot v^L\vert_{y_3=M} + \frac{1}{2}\left|v^L\vert_{y_3=M}\right|^2\right)e_3\right),
$$
which is bounded for any $\delta>0$ by  
$$C \|\phi\|_{\cV}  E_{k+1} + \delta E_{k+1}+ C_\delta (\|\phi\|_{\cV}^2 +\|\phi\|_{\cV}^4 + \|\psi\|^2_{H^{m-\frac{1}{2}}_{uloc}}) (k+1)^2.$$ 
 There remains
$$
\int_{\Om_{bl}^M} \left((v^L + \tv)\cdot \na \tv\right)\cdot\begin{pmatrix}
0\\ \na_h \chi_k(y_h)\cdot \int_{\gamma(y_h)}^{y_3}\tv_h(y_h,z)dz
\end{pmatrix},
$$
which is bounded by $C(E_{k+1}-E_k)^{3/2} + C \|\phi\|_{\cV}(E_{k+1}-E_k)$. Gathering all the terms, we infer that for $\|\phi\|_{\cV}\leq 1$,
$$
E_k \leq C \left( (E_{k+1}-E_k)^{3/2} + (E_{k+1} - E_k) + \|\phi\|_{\cV} E_k + (\|\phi\|_{\cV}^2 + \|\psi\|^2_{H^{m-\frac{1}{2}}_{uloc}})(k+1)^2\right),
$$
where the constant $C$ depends only on $M$ and on $\|\gamma\|_{W^{1,\infty}}$. As a consequence, for $\|\phi\|_{\cV} $ small enough, we infer that for all $k\geq 1$,
$$
E_k \leq C \left( (E_{k+1}-E_k)^{3/2} + (E_{k+1} - E_k)  +  (\|\phi\|_{\cV}^2 + \|\psi\|^2_{H^{m-\frac{1}{2}}_{uloc}}) (k+1)^2\right).
$$
Thanks to a backwards induction argument (again, we refer to \cite{DGVNMnoslip} for all details), we infer that
$$
E_k \leq C  (\|\phi\|_{\cV}^2  + \|\psi\|^2_{H^{m-\frac{1}{2}}_{uloc}}) k^2\quad \forall k\in \N
$$
for a possibly different constant $C$. It follows that 
$$
\|\tv\|_{H^1_{uloc}(\Om^M_{bl})}\leq C  (\|\phi\|_{\cV} + \|\psi\|_{H^{m-\frac{1}{2}}_{uloc}})
$$
and therefore $v^-$ satisfies the same estimate. From there, we can derive a $L^2_{uloc}$ estimate for the pressure. 
Indeed, using the equation and the boundary condition at $y_3=M$, it follows that for all $y\in \Om_{bl}^M$,
$$
p^-(y_h, y_3)= \pa_3 v^-_3\vert_{y_3=M} - \frac{|v^-\vert_{y_3=M}|^2}{2} - \psi_3(y_h) - \int_{y_3}^M (\Delta v^-_3- v^-\cdot \na v^-_3)(y_h, z)\:dz.
$$
Note that by the divergence-free condition, the first-term in the right-hand side can be written as $-\dv_h v^-_h \vert_{y_3=M}$.
For $k\in \Z^2$, let $\varphi_k\in H^1_0(\Om_{bl}^M)$ such that $\Supp \:\varphi_k\subset (k+[0,1]^2)\times \R$. We multiply the above identity by $\varphi_k(x_h,z)$ and integrate over $\Om_{bl}^M$. After some integrations by parts, we obtain
\begin{eqnarray}
\int_{\Om_{bl}^M} p^- \varphi_k&=& \int_{\Om_{bl}^M} v^-_h\vert_{y_3=M} \cdot \na_h \varphi_k - \int_{\Om_{bl}^M} \frac{|v^-\vert_{y_3=M}|^2}{2} \varphi_k - \int_{\Om_{bl}^M} \psi_3 \varphi_k\label{pression1}\\
&-&\int_{\Om_{bl}^M} \left( \int_{y_3}^M (\Delta_h v^-_3 + \pa_3^2 v^-_3- v^-\cdot \na v^-_3)(y_h, z)\:dz\right) \varphi_k(y)\:dy.\label{pression2}
\end{eqnarray}
Using classical trace estimates and Sobolev embeddings, it follows that for all $q\in ]1,\infty[$,
$$
\| v^-\vert_{y_3=M}\|_{L^q_{uloc}(\R^2)}\leq C \| v^-\vert_{y_3=M}\|_{H^{1/2}_{uloc}(\R^2)}\leq C  \| v^-\|_{H^1_{uloc}(\Om_{bl}^M)}.
$$
Therefore the right-hand side of \eqref{pression1} is bounded by $C(\|\phi\|_{\cV} + \|\psi\|_{H^{m-1/2}_{uloc}}) \|\varphi_k\|_{H^1}$ for $\phi, \psi$ small enough. We now focus on \eqref{pression2}. The easiest term is the advection term: we have, since $\varphi_k$ has a bounded support (uniformly in $k$),
$$
\left| \int_{\Om_{bl}^M} \!\!\int_{y_3}^M v^-\cdot \na v^-_3(x_h, z)\:dz\; \varphi_k(y)\:dy\right| \leq C \|v^-\|_{L^4_{uloc}} \|\na v^-\|_{L^2_{uloc}} \|\varphi_k\|_{L^4}\leq C \|\varphi_k\|_{H^1}\|v^-\|_{H^1_{uloc}}^2.
$$
We then treat the two terms stemming from the laplacian separately. For the horizontal derivatives, we merely integrate by parts, recalling that $\varphi_k\in H^1_0(\Om_{bl}^M)$, so that
$$
\int_{\Om_{bl}^M}\int_{y_3}^M \Delta_h v^-_3(y_h,z)\:dz\; \varphi_k(y)\:dy=-\int_{\Om_{bl}^M}\int_{y_3}^M \na_h v^-_3(y_h,z)\cdot \na_h \varphi_k(y)\:dz\:dy,
$$
and the corresponding term is bounded by $C(\|\phi\|_{\cV} + \|\psi\|_{H^{m-1/2}_{uloc}}) \|\varphi_k\|_{H^1}$. As for the vertical derivatives, we have
\begin{eqnarray*}
&&\int_{\Om_{bl}^M}\left(\int_{y_3}^M\pa_3^2 v^-_3(y_h, z)\:dz\right)\varphi_k(y)\:dy\\&=& \int_{\Om_{bl}^M} \left(\pa_3 v^-_3(y_h,M) - \pa_3 v^-_3(y)\right)\varphi_k(y)\:dy\\
&=& - \int_{\Om_{bl}^M} (\na_h\cdot v^-_h(y_h,M) + \pa_3 v^-_3(y)) \varphi_k(y)\:dy\\
&=& \int_{\Om_{bl}^M}v^-_h(y_h,M) \cdot \na_h \varphi_k(y)\:dy - \int_{\Om_{bl}^M}  \pa_3 v^-_3(y)\varphi_k(y)\:dy.
\end{eqnarray*}
Both terms of the right-hand side are bounded by $C\| v^-\|_{H^1_{uloc}}\|\varphi_k\|_{H^1}$.

 Gathering the estimates of \eqref{pression1}, \eqref{pression2}, we infer that there exists a constant $C$ (independent of $\varphi_k$ and of $k$) such that for all $\varphi_k\in H^1_0(\Om_{bl}^M)$ supported in $(k+[0,1]^2)\times \R$,
 $$
 \left|\int_{\Om_{bl}^M} p^- \varphi_k \right|\leq C (\|\phi\|_{\cV} + \|\psi\|_{H^{m-1/2}_{uloc}}) \|\varphi_k\|_{H^1_0(\Om_{bl}^M)}.
 $$
 We deduce that 
 $$
 \|p^-\|_{H^{-1}_{uloc}(\Om_{bl}^M)}\leq  C (\|\phi\|_{\cV} + \|\psi\|_{H^{m-1/2}_{uloc}}) .
 $$
 Using the equation on $(v^-, p^-)$, we also have
 $$
 \|\na p^-\|_{H^{-1}_{uloc}(\Om_{bl}^M)}\leq  C (\|\phi\|_{\cV} + \|\psi\|_{H^{m-1/2}_{uloc}}).
 $$
 It then follows from Ne\u{c}as inequality (see \cite[Theorem IV.1.1]{BoyerFabrie}) that $p^-\in L^2_{uloc}(\Om_{bl}^M)$, with
 $$
 \|p^-\|_{L^2_{uloc}(\Om_{bl}^M)}\leq  C (\|\phi\|_{\cV} + \|\psi\|_{H^{m-1/2}_{uloc}}) .
 $$

\medskip
We still have to establish the two properties itemized in Lemma \ref{WPmoins}. We focus first on the higher order estimates. Note that using interior regularity results for the Stokes system (see \cite{Galdi}), one has $v^-\in H^N_{uloc}(\Om')$ for all open sets $\Om'\subset \R^2$ such that $\bar \Om'\subset \Om^M_{bl}$ and for all $N>0$. In particular, for all $M_1<M_2$ in the interval $] \sup \gamma, M[$,  $v^-\in H^{m+1}_{uloc}(\R^2\times (M_1,M_2)), p^-\in H^m_{uloc}(\R^2\times (M_1,M_2))$.

\medskip
We now tackle the regularity for $y_3>M'$, where $M'\in ] \sup \gamma, M[$. Our arguments are somehow standard (and mainly taken from \cite{BoyerFabrie}), but since there are a few difficulties related to the nonlinear stress boundary condition at $y_3=M$, we give details. The idea is to use an induction argument to show that $v^-\in H^l_{uloc}(\R^2\times [M',M])$ for all  $\sup \gamma<M'<M$ and for $1\leq l\leq m+1$. Unfortunately, the induction only works for $l\geq 2$: indeed, the implication $h\in H^s(\R^2)\Rightarrow h^2 \in H^s(\R^2)$, which is required to handle the nonlinear boundary condition at $y_3=M$, is true for $s>1$ only. Therefore we treat separately the case $l=2$. In the sequel, we write $\|\phi\| + \|\psi\|$ as a shorthand for $\|\phi\|_{\cV} + \|\psi\|_{H^{m-\frac{1}{2}}_{uloc}}$.

In order to prove $H^2_{uloc}$ regularity, the first step is to prove {\it a priori estimates} for $\pa_1 v^-, \pa_2 v^-$  in $H^1_{uloc}$. To that end, we first localize the equation near a fixed $k\in\Z^2$, then differentiate it with respect to $y_j$, $j=1,2$. Let $\theta\in \mathcal C^\infty_0(\R^2)$ be equal to 1 in a neighbourhood of $k\in \Z^2$, and such that the size of $\Supp\; \theta$ is bounded uniformly in $k$ (we omit the $k$-dependence of $\theta$ and of all subsequent functions in order to alleviate the notation). It can be easily checked that the equation satisfied by $w_j:=\pa_j (\theta v^-)$ is
$$
\ba -\Delta w_j + e_3\wedge w_j + v^-\cdot \na w_j + \na \pa_j (\theta p^-)=F_j \text{ in } \Om_\theta,\\
\dv w_j=g_j \text{ in } \Om_\theta,\\
w_j\vert_{y_3=M'}\in H^{1/2}(\R^2),\\
\left(\pa_3 w_j -(\pa_j (\theta p^-) + v^- \cdot w_j - \frac{1}{2} |v^-|^2 \pa_j \theta )e_3\right)\vert_{y_3=M}=\pa_j(\theta \psi),\\
w_j=0\text{ on }\pa \Supp \theta\times (M',M),\ea
$$
where $\Om_\theta  :=\Supp \theta \times (M',M)$ and 
$$\ba
F_j=\underbrace{\pa_j\left(-2 \na \theta \cdot \na v^- - v^- \Delta \theta + (v^-\cdot \na \theta) v^- + p^-\na \theta\right)}_{\| \cdot\|_{H^{-1}}\leq C(\|\phi\| + \|\psi\|) } - \pa_j v^- \cdot \na(\theta v^- ) ,\\
g_j=\pa_j(v^-\cdot \na \theta)= O(\|\phi\| + \|\psi\|)\text{ in } L^2(\R^2\times (M',M)).\ea$$
By standard results, see \cite[Section II.3]{Galdi}, there exists $\overline{w}_j \in H^1(\Omega_\theta)$ such that 
\begin{align*} 
&  \div \overline{w}_j = g_j, \quad \overline{w}_j = w_j  \quad   \mbox{at} \quad \pa \Omega_\theta \setminus \{y_3 = M\}, \\
&   \| \overline{w}_j \|_{H^1(\Omega_\theta)} \le C \left( \|  g_j \|_{L^2(\Omega_\theta)} + \| w_j \|_{H^{1/2}(\{y_3 = M'\})} \right)   
  \end{align*}
Note that we do not need to correct the trace of $w_j$ at $\{ y_3 = M \}$, as there is no Dirichlet boundary condition there. Moreover, we are not sure at this stage that this trace is a $H^{1/2}_{uloc}$ function. We rather prescribe an artificial smooth data for $\overline{w}_j$ at this boundary, chosen so that it satisfies the good compatibility condition. 
Finally,  $\tilde w_j = w_j-\bar w_j$ satisfies
$$
\ba -\Delta \tilde w_j + e_3\wedge \tilde w_j + v^-\cdot \na \tilde w_j + \na\tilde q_j=\tilde F_j \text{ in } \Om_\theta,\\
\dv \tilde w_j=0 \text{ in } \Om_\theta,\\
\tilde w_j\vert_{y_3=M'}=0,\quad \tilde w_j=0\text{ on }\pa \Supp \theta\times (M',M),,\\
\left(\pa_3 \tilde w_j -(\tilde q_j + v^- \cdot \tilde w_j  )e_3\right)\vert_{y_3=M}=\tilde \psi_j,\ea
$$
with $\tilde F_j=- \pa_j v^- \cdot \na(\theta v^- ) + O(\|\phi\| + \|\psi\|)$ in $H^{-1}$, and $\|\tilde \psi_j\|_{H^{-1/2}}\leq C(\|\phi\| + \|\psi\|)$. We obtain the estimate
$$
\|\na \tilde w_j\|_{L^2(\Om_\theta)}^2 \leq C(\|\phi\|^2 + \|\psi\|^2) + \left|\int_{\Om_{\theta}}\left(\pa_j v^- \cdot \na(\theta v^- )\right)\cdot\tilde w_j \right| + 2\int_{\Supp \theta}|v^-\vert_{y_3=M}| \: |\tilde w_j\vert_{y_3=M}|^2.
$$
We first deal with the boundary term:
\begin{eqnarray*}
\int_{\Supp \theta}|v^-\vert_{y_3=M}| \: |\tilde w_j\vert_{y_3=M}|^2&\leq & \|v^-\vert_{y_3=M}\|_{L^2(\Supp\theta)} \| \tilde w_j\vert_{y_3=M}\|_{L^4(\Supp \theta)}^2\\
&\leq & C \|v^-\vert_{y_3=M}\|_{H^{1/2}_{uloc}} \| \tilde w_j\vert_{y_3=M}\|_{H^{1/2}(\Supp \theta)}^2 
\leq  C\|v^-\|_{H^1}\| \tilde w_j\|_{H^1}^2\\&\leq &C (\|\phi\| + \|\psi\|)\|\na \tilde w_j\|_{L^2}^2.
\end{eqnarray*}
Hence for $\psi$ and $\phi$ small enough we can absorb this term in the left hand side of the energy inequality. As for the quadratic source term, we write
\begin{eqnarray*}
\pa_j v^- \cdot \na(\theta v^- )&=& \pa_j v^-_1 w_1 + \pa_j v^-_2 w_2 + \pa_j v^-_3 \theta \pa_3 v^-\\
&=&\pa_j v^-_1 w_1 + \pa_j v^-_2 w_2+ \pa_3 v^- w_{j,3} - v^-_3 \pa_j \theta \pa_3 v^-.
\end{eqnarray*}
For $i=1,..3$, $j=1,2$, $k=1,2$, we have
\begin{eqnarray*}
&&\int_{\Om_\theta} |\pa_i v^-| \:|w_j|\: |\tilde w_k|\leq C\|v^-\|_{H^1_{uloc}(\Om_{bl}^M)} \|w_j\|_{L^4(\Om_\theta)} \|\tilde w_k\|_{L^4(\Om_\theta)} \\
&\leq & C(\|\phi\| + \|\psi\|) (\|\tilde w_1\|_{H^1(\Om_\theta)}^2 + \|\tilde w_2\|_{H^1(\Om_\theta)}^2) + C(\|\phi\| + \|\psi\|)^3
\end{eqnarray*}
and
$$
\left| \int_{\Om_\theta}  v^-_3 \pa_j \theta \pa_3 v^-\cdot \tilde w_j\right|\leq C \|v^-_3\|_{H^1_{uloc}} \|\pa_3 v^-\|_{L^2_{uloc}} \|\tilde w_j\|_{H^1(\Om_\theta)}.
$$
Therefore, we obtain, for $\|\phi\|+ \|\psi\|$ small enough,
$$\|w_1\|_{H^1(\Om_\theta)}^2 + \|w_2\|_{H^1(\Om_\theta)}^2\leq C (\|\phi\|^2+ \|\psi\|^2).$$
Using the same idea as above to estimate $\pa_j(\theta p^-)$, this entails
$$
\| \na_h v^-\|_{H^1_{uloc}(\R^2\times(M',M))} + \|\na_h p^-\|_{L^2_{uloc}(\R^2\times (M',M))} \leq  (\|\phi\|_{\cV}+  \|\psi\|_{H^{m-\frac{1}{2}}_{uloc}}).
$$
Since $v^-$ is divergence free, similar estimates hold for $\pa_3 v_3^-$. Thus $v_3^-\in H^2_{uloc}(\R^2\times(M',M)) $. As for the vertical regularity of $v_h^-$, we observe that $\pa_3 v^-$ is a solution of the Stokes system with Dirichlet boundary conditions
$$
\ba
- \Delta \pa_3 v^- + \na \pa_3 p^-= F_3\text{ in } \R^2 \times (M',M),\\
\dv \pa_3 v^-=0 \text{ in } \R^2 \times (M',M),\\
\pa_3 v^-\vert_{y_3=M}= G,\\
\pa_3 v^-\vert_{y_3=M'}= G',
\ea
$$
where
$$
 F_3= - e_3\wedge \pa_3 v^- - \pa_3(v^-_h\cdot \na_h v^-) - \pa_3 (v^-_3\pa_3 v^-)\in H^{-1}_{uloc}(\R^2),\quad
G_h=\psi_h \in H^{m-1/2}_{uloc}(\R^2),
$$
and $G_3= \pa_3 v^-_3\vert_{y_3=M}\in H^{1/2}_{uloc}(\R^2)$, $G'\in H^{m-1/2}_{uloc}(\R^2)$.  Using the results of Chapter IV in \cite{Galdi}, we infer that $\pa_3 v^-\in H^1_{uloc}(\R^2\times (M',M))$, $\pa_3 p^- \in L^2_{uloc}(\R^2\times (M',M))$, and
\begin{multline*}
\|\pa_3 v^-\|_{ H^1_{uloc}(\R^2\times (M',M))} + \|\pa_3 p^-\|_{L^2_{uloc}(\R^2\times (M',M))}\\ \leq C (\|F\|_{H^{-1}_{uloc}} + \|G\|_{H^{1/2}_{uloc}} + \|G'\|_{H^{1/2}_{uloc}} )\leq C  (\|\phi\| + \|\psi\|)
\end{multline*}
for $\phi$ and $\psi$ small enough. Gathering the inequalities, we obtain
$$
\| v^-\|_{H^2_{uloc}(\R^2\times (M',M))} + \|p^-\|_{ H^1_{uloc}(\R^2\times (M',M))} \leq C  (\|\phi\|_{\cV} + \|\psi\|_{H^{m-\frac{1}{2}}_{uloc}}).
$$
Of course, all inequalities above are {\it a  priori} estimates,  but provide $H^2_{uloc}$ regularity  (and {\it a posteriori} estimates) through the usual method of translations. 

\medskip
We are now ready for the induction argument.  Let $k\in \Z^2$ be fixed. Define a sequence $\vartheta_k^2,\cdots \vartheta_k^{m+1}$ such that 
$\vartheta_k^l:=\theta_1^l(z-M) \theta_2^l(y_h-k)$, where $\theta_1^l\in \mathcal C^\infty_0(\R)$, $\theta_2^l\in  \mathcal C^\infty_0(\R^2)$ are equal to 1 in a neighbourhood of zero. We require furthermore that $\Supp \vartheta_k^{l+1}\subset (\vartheta_k^l)^{-1}(\{1\})$.
 We then define a $\mathcal C^{m+1,1}$ domain $\Om_k\subset \Om_{bl}^M$ such that $\Supp \vartheta_k^2\Subset \overline{\Om_k}$, and such that $\pa \Om_k\cap \pa \Om_{bl}=\emptyset$ (see Figure \ref{fig:Om_k}). Notice also that we choose $\Om_k$ so that $\mathrm{diam} (\Om_k)$ is bounded uniformly in $k$ (in fact, we can always assume that $\Om_k=(k,0)+ \Om_0$ for some fixed domain $\Om_0$.)
\begin{figure}
\caption{The domain $\Om_k$}
\begin{center}
\includegraphics[width=0.7\textwidth]{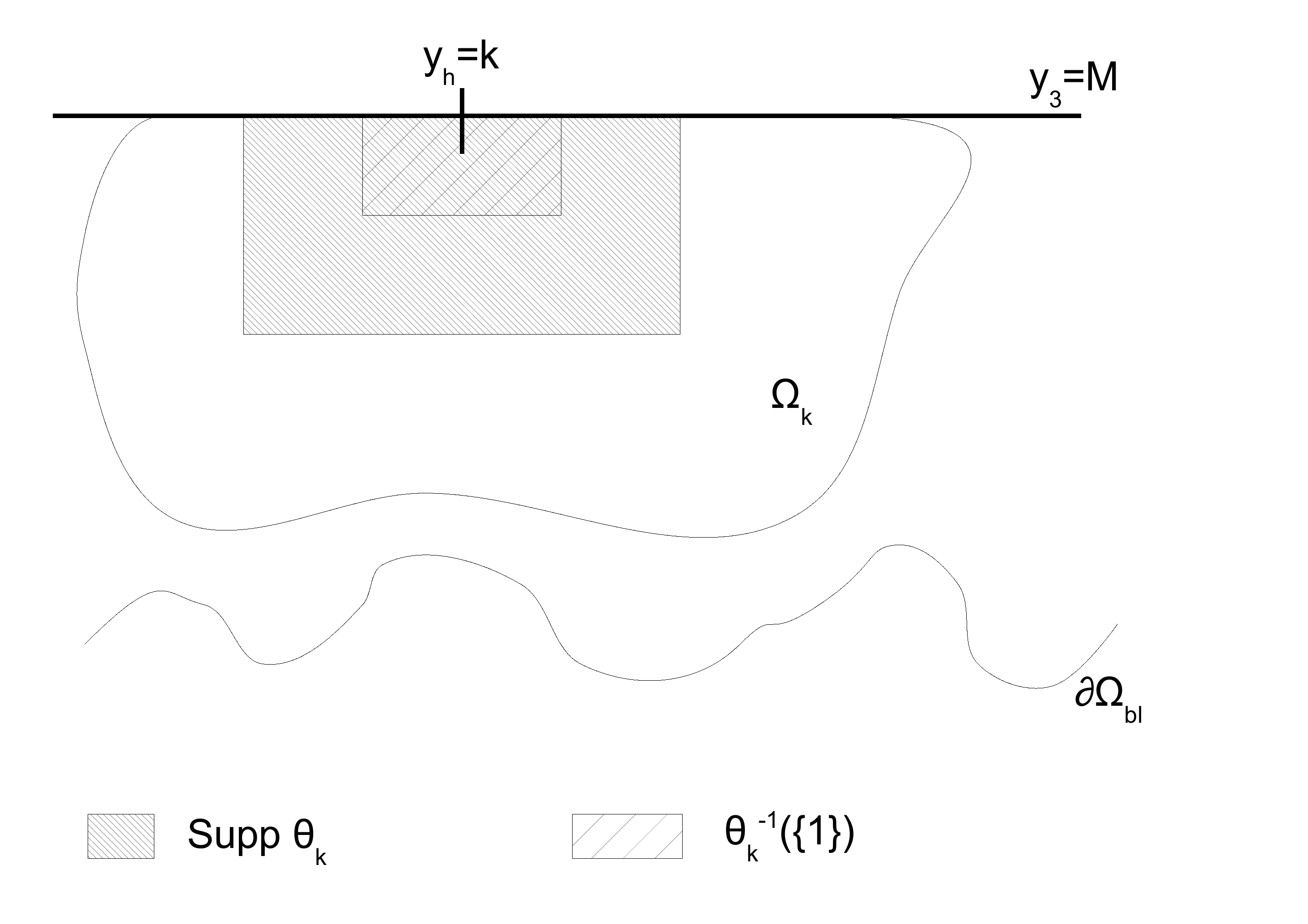}\label{fig:Om_k}
\end{center}
\end{figure}

Multiplying\eqref{NSC2} by  $\vartheta_k^l$ and dropping the dependence with respect to $k$, we find that $v^l\defin\vartheta_k^l v^-, p^l\defin p^- \vartheta_k^l$ is a solution of
\be\label{v}
\left\{\begin{array}{l}
- \Delta v^l + \na p^l= f^l\text{ in }\Om_k,\\
\dv v^l=g^l\text{ in }\Om_k,\\
\pa_n v^l - p^l n= \Sigma^l\text{ on }\pa \Om_k,
\end{array}
\right.
\ee
where 
\be\label{f}
\ba
f^l\defin - 2 \na \vartheta_k^l \cdot \na v^{l-1} - \Delta \vartheta_k^l v^{l-1} - (e_3\wedge v^{l-1} + v^{l-1}\cdot \na v^{l-1})\vartheta_k^l + p^{l-1}\cdot\na \vartheta_k,\\
g^l= v^{l-1}\cdot \na \vartheta_k,\\
\Sigma^l= \theta_2^l(y_h-k) \left(\psi + \frac{1}{2}|v^{l-1}|^2 e_3\vert_{y_3=M}\right)\text{ on }\pa \Om_k \cap \{y_3=M\},\\
\Sigma^l=0\text{ on } \pa \Om_k \cap \{y_3=M\}^c.
\ea
\ee
Now, Theorem  IV.7.1 in \cite{BoyerFabrie} implies that for all $l\in \{2,\cdots, m\}$, for $\|\phi\|_{\cV}  + \|\psi\|_{H^{m-\frac{1}{2}}_{uloc}}$ small enough,
$$
(v^l, p^l)\in H^l(\Om_k)\times H^{l-1}(\Om_k)\Rightarrow (v^{l+1}, p^{l+1})\in H^{l+1}(\Om_k)\times H^{l}(\Om_k),
$$
and
$$
\|v^{l+1}\|_{H^{l+1}(\Om_k)} + \| p^{l+1}\|_{H^{l}(\Om_k)} \leq C \left( \|v^l\|_{H^{l}(\Om_k)} + \| p^l\|_{H^{l-1}(\Om_k)} +\|\psi\|_{H^{l-1/2}(\Om_k)}\right).
$$
Indeed, assume that $(v^l, p^l)\in H^l(\Om_k)\times H^{l-1}(\Om_k)$. Then  $f^{l+1}\in H^{l-1}(\Om_k)$, $g^{l+1}\in H^{l}(\Om_k)$,
with
$$
\|f^{l+1}\|_{H^{l-1}(\Om_k)}\leq C(\|v^l\|_{H^l} + \|v^l\|_{H^l}^2 + \| p^l\|_{H^{l-1}(\Om_k)}),\ \|g^{l+1}\|_{H^l}\leq C \|v^l\|_{H^l}.
$$
Moreover, $v^l\in H^{l-\frac{1}{2}}(\pa \Om_k)$. Since $l\geq 2$, using product laws in fractional Sobolev spaces (see \cite{strichartz}), we infer that $|v^l|^2\vert_{|y_3=M}\in H^{l-\frac{1}{2}}(\R^2)$, and therefore $\Sigma^{l+1}\in  H^{l-\frac{1}{2}}(\R^2)$. From \cite[Theorem  IV.7.1]{BoyerFabrie}, we deduce that  $(v^{l+1}, p^{l+1})\in H^{l+1}(\Om_k)\times H^{l}(\Om_k)$, together with the announced estimate. By induction $v^-\in H^{m+1}_{uloc}(\Om_{bl}^M), p^-\in H^{m}_{uloc}(\Om_{bl}^M)$.  

\smallskip

There only remains to check the compatibility condition at $y_3=M$. Notice that
\begin{eqnarray*}
v_3^-\vert_{|y_3=M}&=&\phi_3+\int_{\gamma(y_h)}^M \pa_3 v^-_3= \phi_3 - \int_{\gamma(y_h)}^M \na_h \cdot v_h^-\\
&=&\phi_3 - \gamma(y_h)\cdot \phi_h +\na_h\cdot \nu_h,
\end{eqnarray*}
where
$$
\nu_h=-\int_{\gamma(y_h)}^M v_h^-  \in H^{1/2}_{uloc}(\R^2).
$$
Since $\phi_3 - \gamma(y_h)\cdot \phi_h$ due to the non-penetrability condition $\phi\cdot n=0$, we obtain the desired identity. 
\end{proof}

\medskip
{\em Step 2}.   Once $(v^-, p^-)$ is defined thanks to Lemma \ref{lem:v-}, we define $(v^+, p^+)$ in the half-space $\{y_3>M\}$ by solving \eqref{NSC1} with $v^+\vert_{y_3=M}=v^-\vert_{y_3=M}$. According to Lemma \ref{lem:v-} and to standard trace inequalities,
$$
\|v^-\vert_{y_3=M}\|_{H^{m+\frac{1}{2}}_{uloc}(\R^2)}\leq C  (\|\phi\|_{\cV} +  \|\psi\|_{H^{m-\frac{1}{2}}_{uloc}}),
$$
for some constant $C$ depending only on $M$ and on $\|\gamma\|_{W^{1,\infty}}$. As a consequence, if $C  (\|\phi\|_{\cV} +\|\psi\|_{H^{m-\frac{1}{2}}_{uloc}}) + \|\nu_h\|_{L^2_{uloc}}\leq \delta_0$, according to Proposition \ref{prop:NS-1} the system \eqref{NSC1} with $v_0=v^-_{|y_3=M}$ has a unique solution.

\medskip
Additionally, $\Sigma(v^+, p^+) e_3\vert_{|y_3=M^+}$ belongs to $H^{m-\frac{1}{2}}_{uloc}(\R^2)$. Thus the mapping
$$
\cF:\begin{array}{ccc}
\cV\times H^{m-\frac{1}{2}}_{uloc} (\R^2)&\longrightarrow & H^{m-\frac{1}{2}}_{uloc}(\R^2)\\
(\phi, \psi)&\mapsto&  \Sigma(v^+, p^+) e_3\vert_{|y_3=M^+} - \psi
\end{array}
$$
is well-defined. Clearly, according to Lemma \ref{lem:v-}, for $\phi=0$ and $\psi=0$, we have $v^-=0$, $v^+=0$ and therefore $\cF(0,0)=0$.

\medskip
 The strategy is then to apply the implicit function theorem to $\cF$ to find a solution of $\cF(\phi, \psi)=0$ for $\phi$ in a neighbourhood of zero. Therefore we check that $\cF$ is $\mathcal C^1$ in a neighbourhood of zero, and that its Fr\'echet derivative with respect to $\psi$ at $(0,0)$ is an isomorphism on $H^{m-\frac{1}{2}}_{uloc}(\R^2)$.
 
 \begin{itemize}
 \item \textit{ $\cF$ is a $\mathcal C^1$ mapping in a neighbourhood of zero:}
 \end{itemize}
Let $\phi_0, \psi_0$ (resp. $\phi,\psi$) in a neighbourhood of zero (in the sense of the functional norms in $\cV$ and $H^{m-\frac{1}{2}}_{uloc} (\R^2)$). We denote by $v_0^\pm, p_0^\pm, v^\pm, p^\pm$ the solutions of \eqref{NSC1}, \eqref{NSC2} associated with $(\phi_0,\psi_0)$, $(\phi_0+\phi, \psi_0+ \psi)$ respectively, and we set $w^\pm:= v^\pm-v_0^\pm$, $q^\pm= p^\pm-p_0^\pm$. 

On the one hand,  in $\Om_{bl}^M$, $w^-$ is a solution of the system
$$
\ba
- \Delta w^- +e_3\wedge w^-+  (v_0^-+ w^-)\cdot \na w^- + w^-\cdot \na v_0^-+ \na q^-=0,\\
\dv w^-=0,\\
w^-\vert_{\pa \Om_{bl}}=\phi,\\
\left[\pa_3 w^- - q^- e_3 - \frac{2v_0^-\cdot w^- + |w^-|^2}{2}e_3\right]\vert_{y_3=M}=\psi.
\ea
$$
Performing estimates similar to the ones of Lemma \ref{lem:v-}, we infer that for $\|\phi_0\|_{\cV} + \|\psi_0\|_{H^{m-1/2}_{uloc}}$ and $ \|\phi\|_{\cV} + \|\psi\|_{H^{m-1/2}_{uloc}}$ small enough,
$$
\| w^- \|_{H^1_{uloc}(\Omega_{bl})}  +     \|w^-\vert_{y_3=M}\|_{H^{m+1/2}_{uloc}}\leq C (\|\phi\|_{\cV} + \|\psi\|_{H^{m-1/2}_{uloc}}). 
$$
It follows that $w^-= w_L^- + O(\|\phi\|_{\cV}^2 + \|\psi\|_{H^{m-1/2}_{uloc}}^2)$ in $H^1_{uloc}(\Om_{bl}^M)$ and in $H^{m+1}_{uloc}((M',M)\times \R^2)$, for all $M'>\sup \gamma$, where $w_L^-$ solves the same system as $w^-$ minus the quadratic terms $w^-\cdot \na w^-$ and $|w^-|^2\vert_{y_3=M}$. 

\medskip
On the other hand, using Theorem \ref{thm_SCsource}, one can show that 
, $w^+=w_L^+ + O(\|\phi\|_{\cV}^2 + \|\psi\|_{H^{m-1/2}_{uloc}}^2)$, where 
$$
\ba 
- \Delta w^+_L +e_3\wedge w^++ v_0^+\cdot \na w^+_L + w^+_L\cdot \na v_0^++ \na q^+_L=0\text{ in }y_3>M,\\
\dv w^+_L=0\text{ in }y_3>M,\\
w^+_L\vert_{y_3=M}=w^-_L\vert_{y_3=M}.
\ea
$$
Using Theorem \ref{thm_SCsource}, we deduce that if $\|(1+y_3)^{1/3} v_0^+\|_{H^{m+1}_{uloc}}$ is small enough (which is ensured by the smallness condition on $\|\phi\|, \|\psi\|$), we have
$$
\|(1+y_3)^{1/3} w_L^+\|_{H^{m+1}_{uloc}(\R^3_+)}\leq C \|w_L^-\vert_{y_3=M}\|_{H^{m+1/2}_{uloc}}\leq C(\|\phi\|_{\cV} + \|\psi\|_{H^{m-1/2}_{uloc}}). 
$$
Therefore, in $H^{m-\frac{1}{2}}_{uloc}(\R^2)$,
\begin{multline*}
\cF(\phi_0+\phi, \psi_0+ \psi) - \cF(\phi_0, \psi_0)\\= -\psi + \pa_3 w_L^+\vert_{y_3=M} - \left(q^+_L +  v_0^+\cdot w^+_L\right)_{|y_3=M} e_3+ O(\|\phi\|_{\cV}^2 + \|\psi\|_{H^{m-1/2}_{uloc}}^2).
\end{multline*}

It follows that the Fr\'echet derivative of $\cF$ at $(\phi_0, \psi_0)$ is
$$
\mathcal L_{\phi_0, \psi_0}:(\phi,\psi)\mapsto -\psi + \pa_3 w_L^+\vert_{y_3=M} - \left(q^+_L + v_0^+\cdot w^+_L\right)_{|y_3=M}e_3.
$$
Using the same kind of  arguments as above, it is easily proved that $w_L^\pm$ depend continuously on $v_0^\pm$, and therefore on $\phi_0, \psi_0$. Therefore $\cF$ is a $\mathcal C^1$ function in a neighbourhood of zero.

\medskip
\begin{itemize}
 \item \textit{ $d_{\psi}\cF(0,0)$ is invertible:}
\end{itemize}
Since $d_{\psi}\cF(0,0)= \mathcal L_{0,0}(0,\cdot)$, we consider the systems solved by $w_L^\pm$ with $v_0^\pm=0$ and $\phi=0$. 
We first notice that if $\mathcal L_{0,0}(0,\psi)=0$, then $w_L:=\mathbf 1_{y_3\leq M } w_L^- + \mathbf 1_{y_3>M} w_L^+$ is a solution of the Stokes-Coriolis system in the whole domain $\Om_{bl}$, with $w_L\vert_{\pa \Om_{bl}}=0$. Therefore, according to \cite{DalibardPrange}, $w_L\equiv 0$ and therefore $\psi=0$. Hence $\ker d_{\psi}\cF(0,0)=\{0\}$, and $d_{\psi}\cF(0,0)$ is one-to-one.

On the other hand,
$$
\left(\pa_3 w_L^+ - q_L^+ e_3\right)\vert_{y_3=M}=DN(w_L^-\vert_{y_3=M}),
$$
where $DN$ is the Dirichlet-to-Neumann operator for the Stokes-Coriolis system, introduced in \cite{DalibardPrange}. In particular, in order to solve the equation
$$
\mathcal L_{0,0}(0,\psi_1)=\psi_2,
$$ 
for a given $\psi_2\in H^{m-1/2}_{uloc}(\R^2)$, we need to solve the system
$$
\ba
- \Delta w_L^- +e_3\wedge w_L^-  + \na q_L^-=0,\\
\dv w_L^-=0,\\
w_L^-\vert_{\pa \Om_{bl}}=0,\\
\left[\pa_3 w_L^- - q_L^- e_3 - \right]\vert_{y_3=M}=-\psi_2 + DN(w_L^-\vert_{y_3=M}).
\ea
$$
According to section 3 in \cite{DalibardPrange}, the above system has a unique solution $w_L^-\in H^1_{uloc}(\Om_{bl}^M)$. There only remains to prove  that $w_L^-\in H^{m+1}_{uloc}(\{M' < y_3 < M\})$ for all $\sup \gamma<M'<M$. Therefore, we notice that  in the domain $\R^2\times (M',M)$, the horizontal derivatives of $w_L^-$ (up to order $m$) satisfy a Stokes-Coriolis system similar to the one above (notice that the Dirichlet-to-Neumann operator commutes with $\pa_1$, $\pa_2$). It follows that $\na_h^\alpha w_L^-\in H^1_{uloc}(\R^2\times (M',M))$ for all $|\alpha|\leq m$. In particular, $\na_h^\alpha w_L^-\vert_{|y_3=M}\in H^{1/2}_{uloc}(\R^2)$ and therefore $w_L^-\vert_{|y_3=M}\in H^{m+1/2}_{uloc}(\R^2)$. It can be checked that $DN: H^{m+1/2}_{uloc}(\R^2) \to H^{m-1/2}_{uloc}(\R^2)$. As a consequence, $\psi_1=\pa_3 w_L^- - q_L^- e_3 \in H^{m-1/2}_{uloc}(\R^2)$. Therefore $d_{\psi}\cF(0,0)$ is an isomorphism of $H^{m-1/2}(\R^2)$.

\vskip2mm

Using the implicit function theorem, we infer that for all $\phi\in \cV$ in a neighbourhood of zero, there exists $\psi\in H^{m-1/2}_{uloc}(\R^2)$ such that $\cF(\phi,\psi)=0$. Let $v:=\mathbf 1_{y_3\leq M} v^- + \mathbf 1_{y_3>M} v^+$, where $v^-, v^+$ are the solutions of \eqref{NSC2}, \eqref{NSC1} associated with $\phi, \psi$. By definition, the jump of $v$ across $\{y_3=M\}$ is zero, and since $\cF(\phi,\psi)=0$,
$$
\Sigma(v^-,p^-)e_3\vert_{y_3=M}=\psi=\Sigma(v^+,p^+)e_3\vert_{y_3=M}.
$$
Using once again the fact that $|v^+|^2\vert_{y_3=M}=|v^-|^2\vert_{y_3=M}$, we deduce that
$$
\left(\pa_3 v^- - p^- e_3\right)\vert_{y_3=M}=\left(\pa_3 v^+ - p^+ e_3\right)\vert_{y_3=M}.
$$
 Thus there is no jump of the stress tensor across $\{y_3=M\}$, and therefore $v$ is a solution of the Navier-Stokes-Coriolis system in the whole domain $\Om_{bl}$. This concludes the proof of Theorem \ref{Thm_WP}.

\section*{Acknowledgements}

The authors have been partially funded by the ANR  project Dyficolti ANR-13-BS01-0003-01. Anne-Laure Dalibard  has received funding from the European Research Council (ERC) under the European Union’s Horizon 2020 research and innovation program (Grant agreement No 63765).

  \section*{Appendix A: Proof of Lemma \ref{lemmadecay} and Lemma \ref{lemmadecay2}}
  
\subsection*{Proof of Lemma \ref{lemmadecay}.}
We begin with a few observations. First, replacing $\chi$ by $\chi_1:= Q \chi \in \mathcal{C}_c^\infty(\R^2)$, it is enough to prove the lemma with $Q=1$. Moreover it is clearly sufficient to prove the lemma for $p_k(\xi)= \xi_1^a \xi_2^b$, with $a+b=k$. Notice also that since $\alpha-k\ge-2$, we can always write $\alpha - k= 2m + \alpha_m$, with $\alpha_m\in [-2, 0[$ and $m\in \N$. Then $\xi_1^a \xi_2^b |\xi|^{\alpha-k}$ is a linear combination of terms of the form $\xi_1^{a'} \xi_2^{b'} |\xi|^{\alpha_m}$, with $a'+b'+\alpha_m=\alpha$ and $a',b'\in \N$. Therefore, in the rest of the proof, we take
$$
Q\equiv 1,\quad P(\xi)= \xi_1^a\xi_2^b |\xi|^\beta,\text{ with }a, b \in \N,\ \beta \in  [-2, 0[, a+b+\beta=\alpha.
$$

Some of the arguments of the proof are inspired by the work of Alazard, Burq and Zuily \cite{ABZ} on the Cauchy problem for gravity water waves in $H^s_{uloc}$ spaces. We  introduce a partition of unity $(\varphi_q)_{q\in \Z^2}$, where $\Supp \varphi_q \subset B(q, 2)$ for $q\in \Z^2$, $\sup_q \|\varphi_q\|_{W^{k,\infty}} <+\infty$ for all $k$. We also introduce functions $\tilde \varphi_q\in \mathcal C^\infty_0(\R^2)$ such that $\tilde \varphi_q\equiv 1$ on $\Supp \varphi_q$, and, say $\Supp \tilde\varphi_q \subset B(q, 3)$.
Then, for $j=1,2,3$,
\begin{eqnarray}
u^j(x_h, z)&=&\sum_{q\in \Z^2}\chi(D)P(D)e^{-\lambda_j(D)z} (\varphi_q u_0)\nonumber\\
&=&\sum_{q\in \Z^2} \int_{\R^2}K^j(x_h-y_h, z) \varphi_q(y_h) u_0(y_h)\:dy_h\nonumber\\
&=& \sum_{q\in \Z^2} \int_{\R^2}K^j_q(x_h, y_h, z) \varphi_q(y_h) u_0(y_h)\:dy_h,\label{dec-ui}
\end{eqnarray}
where
$$
K^j(x_h, z)= \int_{\R^2}e^{i x_h\cdot \xi } \chi(\xi) P(\xi) e^{-\lambda_j(\xi )z}\:d\xi,\ 
K^j_q(x_h, y_h, z)=K^j(x_h-y_h, z)\tilde \varphi_q(y_h).
$$
We then claim that the following estimates hold: there exists $\delta>0$, $C\geq 0$ such that for all $x_h\in \R^2, z>0$,
\be\label{est:K}
\ba
|K^1(x_h,z)| \leq \frac{C}{(1+|x_h| + z^{1/3})^{2+\alpha}},\\
|K^j(x_h,z)|\leq C \frac{e^{-\delta z}}{(1+ |x_h|)^{2+\alpha}}\quad \text{for }j=2,3.
\ea
\ee

Let us postpone the proof of estimates \eqref{est:K} and explain why Lemma \ref{lemmadecay} follows.  Going back to \eqref{dec-ui}, we have, for $j=2,3$,
\begin{eqnarray*}
|u^j(x_h, z)| &\leq & C e^{-\delta z} \sum_{q\in \Z^2,|q-x_h|\geq 3} \frac{1}{(|q-x_h|-2)^{2+\alpha} } \int |\varphi_q(y_h) u_0(y_h)| dy_h  \\
&+&  C e^{-\delta z} \sum_{q\in \Z^2,|q-x_h|\leq 3}  \int |\varphi_q(y_h) u_0(y_h)| dy_h \\
&\leq & C e^{-\delta z }\|u_0\|_{L^1_{uloc}}.
\end{eqnarray*}
In a similar fashion,
\begin{eqnarray*}
|u^1(x_h, z)| &\leq & C\sum_{q\in \Z^2,|q-x_h|\geq 3 } \frac{1}{(|q-x_h| - 2 + z^{1/3})^{2+\alpha}} \int |\varphi_q(y_h) u_0(y_h)| dy_h \\
&+&  C\sum_{q\in \Z^2,|q-x_h|\leq 3}\frac{1}{(1+z^{1/3})^{2+\alpha}}\int |\varphi_q(y_h) u_0(y_h)| dy_h\\
&\leq & C \|u_0\|_{L^1_{uloc}} (1+z)^{-\alpha/3}.
\end{eqnarray*}
The estimates of Lemma \ref{lemmadecay} follow for $z\geq 1$.

\medskip
We now turn to the proof of estimates \eqref{est:K}. Once again we start with the estimates for $K^2, K^3$, which are simpler. Since $\lambda_2, \lambda_3$ are continuous and have non-vanishing real part on the support of $\chi$, there exists a constant $\delta>0$ such that $\Re(\lambda_j(\xi))\geq \delta$ for all $\xi \in \Supp \chi$ and for $j=2,3$. Clearly, for $|x_h|\leq 1$ we have simply
$$
|K^j(x_h,z)| \leq e^{-\delta z} \|\chi P\|_{L^1}.
$$
We thus focus on the set $|x_h|\geq 1$.
 Let $\chi_j(\xi, z)\defin \chi(\xi)\exp(-\lambda_j(\xi )z)$. Then $\chi_j\in L^\infty(\R_+, \mathcal S(\R^2))$, and for all $n_1, n_2, n_3\in \N$, there exists a constant $\delta_n>0$ such that 
 $$
\left| (1+|\xi|^{n_3}) \pa_1^{n_1} \pa_2^{n_2}  \chi_j(\xi, z)\right| \leq C_n \exp(-\delta_n z). 
 $$
Estimate \eqref{est:K} for $K^2, K^3$ then follows immediately from the following Lemma (whose proof is given after the current one):
\begin{lemma} \label{lemma8}
Let $P(\xi)= \xi_1^{a_1}\xi_2^{a_2} |\xi|^\beta$, with $a_1, a_2\in \N$, $\beta\in [-2,0[$, and set $\alpha:= a_1+a_2+ \beta$.
Then there exists  $C>0$ such that: for any  $\zeta\in \mathcal{S}(\R^2)$, for all $x_h\in \R^2$, $|x_h|\geq 1$,
$$
\left|P(D) \zeta (x_h)\right|\leq \frac{C}{|x_h|^{2+\alpha}} \left(\|\zeta\|_1  
+ \| |y_h|^{a_1+a_2+2}\pa_1^{a_1}\pa_2^{a_2} \zeta\|_\infty   \right).
$$
\label{lem:op-integral}
\end{lemma}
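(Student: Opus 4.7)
The plan is to establish the decay of $P(D)\zeta(x_h)$ for $|x_h|\ge 1$ via a physical-space splitting of $\zeta$: a piece $\zeta_1$ localized near $y=x_h$, where the kernel $K:=\mathcal{F}^{-1}(P)$ is singular but $\zeta$ can be controlled through the weighted sup-norm, and a piece $\zeta_2$ supported away from $y=x_h$, where $K$ is smooth and decays so that $\|\zeta\|_1$ suffices. After the preliminary reductions (replacing $\chi$ by $Q\chi$ and expanding $P$ into monomial terms of the form $P(\xi)=\xi^\gamma|\xi|^\beta$ with $\gamma=(a_1,a_2)$, $\beta\in[-2,0[$, and $\alpha=|\gamma|+\beta$), I would identify $K$ with $c\,\pa^\gamma G_\beta$, where $G_\beta(x)=c_\beta|x|^{-2-\beta}$ for $\beta\in(-2,0)$ is the 2D Riesz potential (locally integrable) and $G_{-2}(x)=-\tfrac{1}{2\pi}\log|x|$. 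In either case $K$ is smooth away from the origin and homogeneous of degree $-(2+\alpha)$, so $|K(x)|\le C|x|^{-(2+\alpha)}$ for $x\neq 0$.

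I would then introduce a smooth radial cutoff $\eta\in C_c^\infty(\R^2)$ with $\eta\equiv 1$ on $\{|z|\le 1/4\}$ and $\mathrm{supp}\,\eta\subset\{|z|\le 1/2\}$, and decompose $\zeta=\zeta_1+\zeta_2$ with $\zeta_1(y):=\eta\bigl((y-x_h)/|x_h|\bigr)\zeta(y)$. The far piece is immediate: on $\mathrm{supp}\,\zeta_2$ one has $|x_h-y|\ge|x_h|/4$, so $|K(x_h-y)|\le C|x_h|^{-(2+\alpha)}$ and $|P(D)\zeta_2(x_h)|\le C|x_h|^{-(2+\alpha)}\|\zeta\|_1$. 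For the near piece, I would use the representation $P(D)\zeta_1(x_h)=c\int G_\beta(x_h-y)\pa^\gamma\zeta_1(y)\,dy$ (valid since convolution commutes with differentiation, $G_\beta\in L^1_{\mathrm{loc}}$, and $\pa^\gamma\zeta_1\in C_c^\infty$) and expand $\pa^\gamma\zeta_1$ by Leibniz. The main term $\eta((y-x_h)/|x_h|)\pa^\gamma\zeta(y)$ is estimated using $|y|\ge|x_h|/2$ on the support, so $|\pa^\gamma\zeta(y)|\le C|x_h|^{-(|\gamma|+2)}\bigl\||y|^{|\gamma|+2}\pa^\gamma\zeta\bigr\|_\infty$, combined with $\int_{|z|\le|x_h|/2}|G_\beta(z)|\,dz\le C|x_h|^{-\beta}$ (for $\beta=-2$ one first recenters $G_\beta$ by an additive constant, which does not affect $K=\pa^\gamma G_\beta$ whenever $|\gamma|\ge 1$ and which removes the logarithm from the disk integral); the product yields exactly $C|x_h|^{-(2+\alpha)}\bigl\||y|^{|\gamma|+2}\pa^\gamma\zeta\bigr\|_\infty$.

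The main obstacle lies in handling the Leibniz sub-terms with $\gamma'<\gamma$, which involve intermediate derivatives $\pa^{\gamma'}\zeta$ that the hypothesis does not control. The decisive observation is that for $\gamma'<\gamma$ the factor $\pa^{\gamma-\gamma'}\eta$ vanishes on a neighbourhood of $0$, so the integrand is supported on the annulus $\{|y-x_h|\sim|x_h|\}$ and is bounded away from the singularity of $G_\beta$. This allows a second integration by parts to move $\pa^{\gamma'}$ off $\zeta$ onto the smooth compactly supported function $G_\beta(x_h-\cdot)\,\pa^{\gamma-\gamma'}\eta((\cdot-x_h)/|x_h|)$, with no boundary contribution. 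A further Leibniz expansion produces pieces of the form $|x_h|^{-|\gamma-\gamma'|-|\gamma'-\gamma''|}\pa^{\gamma''}G_\beta(x_h-y)\,\pa^{\gamma-\gamma''}\eta((y-x_h)/|x_h|)\,\zeta(y)$ for $\gamma''\le\gamma'$; using $|\pa^{\gamma''}G_\beta(x_h-y)|\le C|x_h|^{-(2+\beta+|\gamma''|)}$ on the annulus together with the telescoping identity $|\gamma-\gamma'|+|\gamma'-\gamma''|+|\gamma''|=|\gamma|$, the total prefactor collapses to exactly $|x_h|^{-(2+\alpha)}$, and the remaining integral against $|\zeta|$ is controlled by $\|\zeta\|_1$. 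Summing the main term, the sub-terms, and the contribution of $\zeta_2$ then closes the proof.
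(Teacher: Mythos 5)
Your proof is correct, and it rests on the same core mechanism as the paper's: a splitting at scale $|x_h|$ around the singularity of the kernel, with the weighted sup-norm of $\pa_1^{a_1}\pa_2^{a_2}\zeta$ controlling the near region and $\|\zeta\|_1$ controlling the far region, the two contributions scaling identically to $|x_h|^{-(2+\alpha)}$. The one structural difference is where the cutoff sits. The paper writes $P(D)\zeta = \pa_1^{a_1}\pa_2^{a_2}\,(G_\beta \star \zeta)$ and inserts the cutoff $\chi_n$ (at scale $n=\lfloor |x_h|\rfloor$) on the \emph{kernel variable}: in the near piece the $x_h$-derivatives then fall entirely on $\zeta(x_h-\cdot)$, and in the far piece they are integrated by parts entirely onto $(1-\chi_n)\,G_\beta$, so the only Leibniz expansion involves the kernel and the cutoff, never intermediate derivatives of $\zeta$. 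You instead cut off $\zeta$ itself near $y=x_h$, which forces a Leibniz expansion of $\pa^{\gamma}\zeta_1$ producing derivatives $\pa^{\gamma'}\zeta$ with $\gamma'<\gamma$ that the hypotheses do not control; your fix --- observing that the accompanying factor $\pa^{\gamma-\gamma'}\eta$ is supported on the annulus $|y-x_h|\sim|x_h|$, away from the singularity, so a second integration by parts returns these derivatives to $G_\beta$ and $\eta$ with the telescoping count of powers of $|x_h|$ collapsing to $|x_h|^{-(2+\alpha)}$ --- is valid, but it is an extra step that the paper's arrangement avoids altogether. Your treatment of the endpoint $\beta=-2$ (recentring the logarithmic kernel by a constant, harmless since $|\gamma|\ge 1$ there in the application) matches the brief remark at the end of the paper's proof. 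In short: same decomposition and same estimates, with your version paying a small technical surcharge for placing the cutoff on $\zeta$ rather than on the kernel.
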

We now address the estimates on $K^1$. When $|x_h|\leq 1$, $z\leq 1$, we have simply $|K^1(x_h,z)|\leq \|P\chi\|_1$, and the estimate follows. When $z\leq1$ and $|x_h|\geq 1$ we apply Lemma \ref{lem:op-integral} with $\zeta(\xi)=\mathcal F^{-1}\left(\chi(\xi) \exp(-\lambda_1(\xi) z)\right)$. Notice that
$$
\| \zeta\|_1   \lesssim \|\chi(\xi) \exp(-\lambda_1(\xi) z)\|_{W^{3,1}}, 
$$ 
and 
$$ \| |y_h|^{a_1+a_2+2}\pa_1^{a_1}\pa_2^{a_2} \zeta\|_\infty
\lesssim \|\xi_1^{a_1}\xi_2^{a_2} \chi(\xi) \exp(-\lambda_1(\xi) z)\|_{W^{2+a_1+a_2,1}}.
$$
Since  the right-hand sides of the above inequalities are bounded (recall that $\lambda_1(\xi)=|\xi|^3 \Lambda_1(\xi)$ with $\Lambda_1\in \mathcal C^\infty(\R^2)$, see Remark \ref{remark_roots}), it follows that estimate \eqref{est:K} is true for $z\leq1$ and $|x_h|\geq 1$. 

We now focus on the case $z\geq 1$. We first change variables in the integral defining $K^1$ and we set $\xi'= z^{1/3} \xi$, $x_h'= \frac{x_h}{z^{1/3}}.$ Since $P$ is homogeneous, this leads to
$$
K^1(x_h, z)= \frac{1}{z^{\frac{2+\alpha}{3}}}\int_{\R^2} e^{i x_h'\cdot \xi'} P(\xi') \chi\left(\frac{\xi'}{z^{1/3}}\right) \exp\left(- \lambda_1\left(\frac{\xi'}{z^{1/3}}\right) z\right) d\xi'.
$$ 
 Since $\lambda_1/|\xi|^3$ is continuous and does not vanish on the support of $\chi$, there exists a positive constant $\delta'$ such that $\lambda_1(\xi )\geq \delta' |\xi|^3$ on $\Supp \chi$. Therefore, for $|x_h'|\leq 1$, we have
$$
|K^1(x_h,z)|\leq \frac{1}{z^{\frac{2+\alpha}{3}}}\left\| \exp(-\delta' |\xi|^3) P(\xi')\right\|_{L^1},
$$
and the estimate for $K^1$ on the set $|x_h| \leq z^{1/3}$ is proved.

For $|x_h'|\geq 1$, we split the integral in two. Let $\varphi \in \mathcal C^\infty_0$ such that $\varphi\equiv 1$ in a neighbourhood of zero. Then
\begin{eqnarray*}
K^1(x_h, z)&=& \frac{1}{z^{\frac{2+\alpha}{3}}}\int_{\R^2} e^{i x_h'\cdot \xi'} P(\xi') \varphi(\xi')\chi\left(\frac{\xi'}{z^{1/3}}\right) \exp\left(- \lambda_1\left(\frac{\xi'}{z^{1/3}}\right) z\right) d\xi'\\
&&+ \frac{1}{z^{\frac{2+\alpha}{3}}}\int_{\R^2} e^{i x_h'\cdot \xi'} P(\xi') (1-\varphi(\xi'))\chi\left(\frac{\xi'}{z^{1/3}}\right) \exp\left(- \lambda_1\left(\frac{\xi'}{z^{1/3}}\right) z\right) d\xi'\\
&=:& K^1_1 + K^1_2.
\end{eqnarray*}
We first consider the term $K^1_2$. Because of the truncation $1-\varphi$, we have removed all singularity coming from $P$ close to $\xi=0$. Therefore, performing integrations by part, we have, for any $n\in \N$, for $j=1,2$,
$$
{x_j'}^nK^1_2(x_h, z)= \frac{1}{z^{\frac{2+\alpha}{3}}}\int_{\R^2} e^{i x_h'\cdot \xi'}D_{\xi'_j}^n \left[ P(\xi') (1-\varphi(\xi'))\chi\left(\frac{\xi'}{z^{1/3}}\right) \exp\left(- \lambda_1\left(\frac{\xi'}{z^{1/3}}\right) z\right)\right]\: d\xi'.$$
When the $D_{\xi'_j}$ derivative hits $P(1-\varphi)$, we end up with an integral bounded by
$$
C_n \int_{\R^2} |\xi'|^\alpha \mathbf 1_{\xi'\in \Supp (1-\varphi)}\exp(-\delta'|\xi'|^3)\:d\xi'\leq C_n.
$$
When the derivative hits $\chi\left(\frac{\xi'}{z^{1/3}}\right)$ the situation is even better as a power of $z^{1/3}$ is gained with each derivative. Therefore the worst terms occur when the derivative hits the exponential. Remember that $\lambda_1(\xi)= |\xi|^3 \Lambda_1(\xi)$, where $\Lambda_1\in \mathcal C^\infty(\R^2)$ with $\Lambda_1(0)=1$ and $\Lambda_1$ does not vanish on $\R^2$. Therefore, for all $\xi'\in \R^2$, $z>0$,
$$
\exp\left(- \lambda_1\left(\frac{\xi'}{z^{1/3}}\right) z\right)= \exp\left(- |\xi'|^3 \Lambda_1\left(\frac{\xi'}{z^{1/3}}\right)\right).
$$
We infer that for any $0\leq n \leq 3+ \lfloor \alpha \rfloor$, on $\Supp \chi(\cdot/z^{1/3})$, we have
\be\label{est:der-exp}
\left| P(\xi')\na_{\xi'_j}^n\exp\left(- \lambda_1\left(\frac{\xi'}{z^{1/3}}\right) z\right)\right| \leq C_n \exp\left(-\frac{\delta'}{2}|\xi|^3\right).
\ee
We deduce eventually that
$$
|K^1_2(x_h, z)| \leq C \frac{1}{z^{\frac{2+\alpha}{3}}} \frac{1}{(1+ |x_h'|^{2+\alpha})} \leq \frac{C}{(|x_h|+ z^{1/3})^{2+\alpha}}.
$$

For the term $K^1_1$, we use once again Lemma \ref{lem:op-integral}, with 
$$
\zeta\defin \mathcal F^{-1}\left( \varphi(\xi')\chi\left(\frac{\xi'}{z^{1/3}}\right) \exp\left(- \lambda_1\left(\frac{\xi'}{z^{1/3}}\right) z\right) \right).
$$
Using the same type of estimate as \eqref{est:der-exp} above, we obtain
$$
|K^1_2(x_h, z)| \leq C \frac{1}{z^{\frac{2+\alpha}{3}}} \frac{1}{|x_h'|^{2+\alpha}} \leq \frac{C}{|x_h|^{2+\alpha}}.
$$
This concludes the proof of  Lemma \ref{lemmadecay}.\qed

\begin{proof}[Proof of Lemma \ref{lem:op-integral}]
We have
$$
P(D)\zeta= D_1^{a_1} D_2^{a_2} \mathrm{Op}(|\xi|^{\beta}) \zeta.
$$
Thus we first compute $\mathrm{Op}(|\xi|^{\beta}) \zeta$. 
We first focus on the case $\beta\in ]-2,0[$. We follow the ideas of Droniou and Imbert \cite[Theorem 1]{DroniouImbert}, recalling the main steps of the proof. The function $\xi\in \R^2\mapsto |\xi|^{\beta}$ is radial and locally integrable, and thus belongs to $\mathcal S'$. Its Fourier transform in $\mathcal S'(\R^2)$ is also radial and homogeneous of degree $-\beta-2 \in  ]-2,0[$. Therefore it coincides (up to a constant) with $|\cdot|^{-\beta-2}$ in $\mathcal S'(\R^2\setminus\{0\})$, and since the latter function is locally integrable, we end up with $\mathcal F^{-1}(|\xi|^\beta)=C |x_h|^{-\beta-2}$ in $\mathcal S'(\R^N)$. Hence
$$
P(D)\zeta(x_h)= C \pa_1^{a_1} \pa_2^{a_2} \int_{\R^2} \frac{1}{|y_h|^{\beta+2}} \zeta( x_h-y_h)\:dy_h.
$$
Notice that in the present case, we do not need to have an exact formula for $P(D)\zeta$, but merely some information about its decay at infinity. As a consequence we take a short-cut in the proof of \cite{DroniouImbert}. We take a cut-off function $\chi\in \mathcal C^\infty_0(\R^2)$ such that $\chi\equiv 1$ in a neighbourhood of zero, and we write
\begin{eqnarray*}
P(D)\zeta(x_h)&=& C\int_{\R^2} \frac{\chi(y_h)}{|y_h|^{\beta+2}} \pa_1^{a_1}\pa_2^{a_2} \zeta(x_h-y_h)\:dy_h\\
&+& C \sum_{\substack{0\leq i_1\leq a_1,\\0\leq i_2\leq a_2}}C_{i_1,i_2}\int_{\R^2}\pa_1^{i_1}\pa_2^{i_2}(1-\chi(y_h)) \pa_1^{a_1-i_1}\pa_2^{a_2-i_2} \left(\frac{1}{|y_h|^{\beta+2}}\right)\zeta(x_h-y_h)\:dy_h\\
&=:&I_1+I_2.
\end{eqnarray*}
We now choose $\chi$ in the following way. Let $n=\lfloor |x_h|\rfloor\in \N$, and take $\chi= \chi_n=\eta(\cdot/n)$, where $\Supp \eta \subset B(0,1/2)$ and $\eta\equiv 1$ in a neighbourhood of zero. Notice in that case that if $y_h\in \Supp \chi_n$, then $|x_h-y_h|\geq |x_h|/2$. Therefore, for the first term, we have
\begin{eqnarray*}
|x_h|^{2+\alpha} |I_1| &\leq& (n+1)^\beta \left(\int_{|y_h|\leq n/2} |y_h|^{-\beta-2}\:dy_h\right) \left\| |y_h|^{2+a_1+a_2} \pa_1^{a_1}\pa_2^{a_2} \zeta\right\|_{L^\infty}\\&\leq& C \left\| |y_h|^{2+a_1+a_2} \pa_1^{a_1}\pa_2^{a_2} \zeta\right\|_{L^\infty}.
\end{eqnarray*}
Using the assumptions on $\eta$ and $\chi_n$ and the estimate
$$
\left| \pa_1^{a_1-i_1}\pa_2^{a_2-i_2} \left(\frac{1}{|y_h|^{\beta+2}}\right)\right|\leq \frac{C}{|y_h|^{\alpha+2-i_1-i_2}}\leq \frac{C}{n^{\alpha+2-i_1-i_2}}\quad \forall y_h\in \Supp (1-\chi_n),
$$
we infer that
$$
|I_2|\leq C \|\zeta\|_{L^1} n^{-\alpha-2}\leq C \|\zeta\|_{L^1} |x_h|^{-\alpha-2}.
$$
Gathering all the terms, we obtain the inequality announced in the Lemma.
To conclude the proof, we still have to consider the case $\beta = -2$: in such a case, $|\xi|^\beta$ corresponds to inverting the Laplacian over $\R^2$. Hence, the kernel $|x_h - y_h|^{-\beta-2}$ has to be replaced by $\frac{1}{2\pi} \ln(|x_h - y_h|)$. This does not modify the previous reasoning. 
\end{proof}

\subsection*{Proof of Lemma \ref{lemmadecay2}}

The proof is somewhat simpler than the one of Lemma \ref{lemmadecay}. As indicated in the Remark following Lemma \ref{lemmadecay2}, notice that for $n>1$, for all $\xi\in \R^2, z>0$,
$$
\left|(1+|\xi|^2)^{-n} (1-\chi(\xi)) P(\xi) e^{-\lambda_j(\xi) z}\right|\leq \|P\|_{L^\infty(B_r^c)}\frac{e^{-\delta z}}{(1+|\xi|^2)^n},
$$
and the right-hand side of the above inequality is in $L^1(\R^2)$ for all $z$. As a consequence, for  $j=1..3$, $n>1$, the kernel
$$
K_{n,j}(x_h, z)\defin \int_{\R^2} e^{i x_h\cdot \xi} (1+|\xi|^2)^{-n}(1-\chi)(\xi) P(\xi) \exp(-\lambda_j(\xi)z)\:d\xi
$$
is well defined and satisfies
$$
\| K_{n,j}(\cdot, z)\|_{L^\infty(\R^2)}\leq C_n \|P\|_{L^\infty(B_r^c)} e^{-\delta z}.
$$
Furthermore, if $a_1, a_2\in \N$ with $a_1+a_2\leq 3$,
$$
x_1^{a_1} x_2^{a_2} K_{n,j}(x_h,z)= \int_{\R^2} e^{i x_h\cdot \xi} D_1^{a_1}D_2^{a_2}\left((1+|\xi|^2)^{-n}(1-\chi)(\xi) P(\xi) \exp(-\lambda_j(\xi)z)\right)\:d\xi.
$$
Hence, up to taking a larger $n$ and a smaller $\delta$: 
$$
|K_{n,j}(x_h,z)|\leq C_n \|P\|_{W^{3,\infty}(B_r^c)} e^{-\delta z}(1+|x_h|)^{-3},
$$
and in particular, $K_{n,j}\in L^\infty_z(L^2_{x_h})$. Thus for any $f\in L_{uloc}^2$,
$$
\|(1+ |D|^2)^{-n} (1-\chi(D)) P(D) \exp(-\lambda_j(D)z) f\|_{L^\infty}=\|K_{n,j}\ast f\|_{L^\infty}\leq C e^{-\delta z}\|f\|_{L^2_{uloc}}.
$$
Taking $f=(1+|D|^2)^n u_0=(1-\Delta_h)^n u_0$ for some $u_0\in H^{2n}_{uloc}$, we obtain the result announced in  Lemma \ref{lemmadecay2}.

\section*{Appendix B: Estimates on a few integrals}

\begin{lemma} There exists a positive constant $C$ such that for all $z\geq 0$,
$$
\ba
\int_0^\infty\frac{1}{(1+|z-z'|)^{2/3}(1+z')^{2/3}} dz'\leq \frac{C}{(1+z)^{1/3}},\\
\int_0^\infty\frac{1}{(1+|z-z'|)(1+z')^{2/3}} dz'\leq \frac{C\ln (2+z)}{(1+z)^{2/3}},
\ea
$$
and for all $\gamma, \delta >0$ such that $\delta<1$ and $\gamma + \delta>1$, there exists a constant $C_{\gamma, \delta}$ such that
$$
\int_0^\infty \frac{1}{(1+z+z')^\gamma}\frac{1}{(1+z')^\delta}\:dz'\leq \frac{C_{\gamma, \delta} }{(1+z)^{\gamma+ \delta}}\quad \forall z\geq 0.
$$
\label{lem:integrales}

\end{lemma}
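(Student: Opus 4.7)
All three estimates share a common strategy: partition the $z'$-range into three pieces---``$z'$ small compared to $z$'', ``$z' \sim z$'', and ``$z'$ large compared to $z$''---and on each piece replace whichever of the two factors is approximately constant by its value at a representative point. For $z \le 1$ the bounds are immediate (the integrals are uniformly bounded, while each right-hand side is bounded below by a positive constant using $\gamma+\delta>1$), so throughout I assume $z \ge 1$.

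For the first inequality, split $[0,\infty) = [0,z/2] \cup [z/2, 2z] \cup [2z,\infty)$. On $[0,z/2]$ one has $|z-z'| \ge z/2$, so pulling out $(1+z)^{-2/3}$ and using $\int_0^{z/2}(1+z')^{-2/3}\,dz' \lesssim z^{1/3}$ gives a contribution of order $(1+z)^{-1/3}$. On $[z/2,2z]$, $(1+z')$ is comparable to $(1+z)$; pulling out $(1+z)^{-2/3}$ and integrating $(1+|z-z'|)^{-2/3}$ over an interval of length $\sim z$ again gives $(1+z)^{-1/3}$. On $[2z,\infty)$ both $|z-z'|$ and $(1+z')$ are $\sim z'$, so the integrand is $\lesssim (z')^{-4/3}$ and the tail integrates to $O(z^{-1/3})$.

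The proof of the second inequality is verbatim, except the first factor now has exponent $1$. The regions $[0,z/2]$ and $[2z,\infty)$ contribute $O((1+z)^{-1})$ and $O((1+z)^{-2/3})$ respectively (absorbed in the final bound), while the middle region is precisely the critical case: $\int_{-z/2}^{z}(1+|w|)^{-1}\,dw \lesssim \ln(2+z)$, producing the logarithmic factor in the stated estimate.

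For the third inequality I would split at $z' = z$. On $[0,z]$, using $1+z+z' \ge 1+z$ together with $\delta < 1$ gives $(1+z)^{-\gamma}\int_0^z(1+z')^{-\delta}\,dz' \lesssim (1+z)^{-\gamma} z^{1-\delta}$. On $[z,\infty)$, using $1+z+z' \gtrsim z'$ together with $\gamma + \delta > 1$ gives $\int_z^\infty (z')^{-(\gamma+\delta)}\,dz' \lesssim z^{1-\gamma-\delta}$. The natural decay rate produced by this splitting is thus $(1+z)^{1-\gamma-\delta}$. The main obstacle is to match the printed exponent $\gamma+\delta$: an AM--GM-type refinement $1+z+z' \ge (1+z)^{1-\alpha}(1+z')^{\alpha}$ (valid for $\alpha \in [0,1]$ by concavity) does not improve the outcome, since optimising over $\alpha$ returns the same exponent, and no pointwise estimate on the two factors seems to gain an additional power of $(1+z)$. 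This is precisely the step I would not immediately know how to push through; on the positive side, the decay $(1+z)^{1-\gamma-\delta}$ that the splitting does yield is exactly what is needed in the applications (e.g. with $\gamma=\delta=2/3$, producing $(1+z)^{-1/3}$ in Lemma \ref{boundarydatalowf}), so the bookkeeping downstream would remain unchanged.
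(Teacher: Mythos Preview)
Your treatment of the first two inequalities is correct. The paper argues instead by a scaling substitution $z'=zt$, reducing both integrals to a single convergent model integral (plus a logarithmic contribution near $t=1$ in the second case). Your three-region splitting is an equally standard alternative and gives the same bounds with no extra effort.

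On the third inequality you have in fact put your finger on a typo in the statement. The bound $(1+z)^{-(\gamma+\delta)}$ cannot hold: already the contribution of $z'\in[0,1]$ to the integral is of order $(1+z)^{-\gamma}$, which dominates $(1+z)^{-(\gamma+\delta)}$ for large $z$. Your splitting at $z'=z$ gives the sharp decay $(1+z)^{1-\gamma-\delta}$, and this is exactly the exponent the paper \emph{uses} in its applications (with $\gamma=\delta=2/3$ yielding $(1+z)^{-1/3}$ in the proof of Lemma~\ref{boundarydatalowf}). So there is no missing idea on your side; the printed exponent $\gamma+\delta$ should read $\gamma+\delta-1$, and your argument proves the corrected statement.
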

\begin{proof}The first two inequalities are obvious if $z$ is small (say, $z\leq 1/2$), simply by writing
$$
\frac{1}{1+ |z-z'|}\leq\frac{C}{1+z'}.
$$
Hence we focus on $z'\geq 1/2$. In that case, changing variables in the first integral, we have
\begin{eqnarray*}
\int_0^\infty\frac{1}{(1+|z-z'|)^{2/3} (1+z')^{2/3}} dz'&=&\frac{1}{z^{1/3}}\int_0^\infty\frac{1}{\left({z}^{-1} + |1-t|\right)^{2/3}} \frac{1}{\left({z}^{-1} + t\right)^{2/3}} dt\\
&\leq&\frac{1}{z^{1/3}}\int_0^\infty\frac{1}{ |1-t|^{2/3}} \frac{1}{ t^{2/3}} dt,
\end{eqnarray*}
which proves the first inequality. The second one is treated in a similar fashion:
\begin{eqnarray*}
\int_0^\infty\frac{1}{1+|z-z'|}\frac{1}{(1+z)^{2/3}} dz'&=&z^{-1}\int_0^\infty\frac{1}{{z}^{-1} + |1-t|} \frac{1}{\left({z}^{-1} + t\right)^{2/3}} dt\\
&\leq&z^{-1}\int_0^\infty\frac{1}{{z}^{-1} + |1-t|} \frac{1}{ t^{2/3}} dt .
\end{eqnarray*}
It is easily checked that $\int_{1/2}^{3/2} \frac{1}{{z}^{-1} + |1-t|} dt\leq C \ln(2+z) $. The second estimate follows. The last estimate is proved by similar arguments and is left to the reader.

\end{proof}

\bibliography{ekmanNL}

\end{document}